\theoremstyle{plain}
\newtheorem{thm}{Theorem}[section]
\newtheorem{lem}[thm]{Lemma}
\newtheorem{prop}[thm]{Proposition}
\theoremstyle{definition}
\newtheorem{defn}{Definition}[section]
\newtheorem{def-prop}[defn]{Definition-Proposition}
\theoremstyle{remark}
\newtheorem{rem}{Remark}[section]
\theoremstyle{example}
\numberwithin{equation}{section}
\newcommand{\Z}{\mathbb Z}
\newcommand{\C}{\mathbb C}
\newcommand{\G}{\mathbb G}
\newcommand{\PP}{{\mathbb P}}
\newcommand{\OO}{\mathcal{O}}
\newcommand{\HHH}{\mathcal{H}}
\newcommand{\W}{\mathcal{W}}
\newcommand{\ul}{\underline}
\newcommand{\OP}{\OO_{\PP^2}}
\newcommand{\tr}[1]{{{}^t#1}}
\newcommand{\ceqrn}{\hat{{\mathcal{C}}}_{r,n}}
\newcommand{\crn}{{\mathcal{C}}_{r,n}}
\newcommand{\crnb}{{\mathcal{S}}^B_{r,n}}
\newcommand{\cnb}{{\mathcal{S}}^B_{n}}
\newcommand{\phib}{\varphi^B}
\newcommand{\sy}{S^2 V}
\newcommand{\sk}{\Lambda^2 V}
\newcommand{\syrc}{S^2V_{\le r}}
\newcommand{\SymplFormMtrx}[1]{
        \begin{bmatrix}
                   0 & I_{#1}\\
                -I_{#1} &   0
        \end{bmatrix}}
\DeclareMathOperator{\di}{\Diamond}
\DeclareMathOperator{\sh}{\sharp}
\DeclareMathOperator{\HH}{H} \DeclareMathOperator{\hh}{h}
 \DeclareMathOperator{\Hom}{Hom}
\DeclareMathOperator{\im}{Im} \DeclareMathOperator{\rk}{rk}
\DeclareMathOperator{\Ker}{Ker} 
\DeclareMathOperator{\coker}{Coker}
\DeclareMathOperator{\T}{T}
\DeclareMathOperator{\Pf}{Pfaff}
 \DeclareMathOperator{\GL}{GL}
\DeclareMathOperator{\codim}{codim} \DeclareMathOperator{\SL}{SL}
\DeclareMathOperator{\SO}{SO}
\DeclareMathOperator{\Sp}{Sp}
\begin{document}

\title{Orthogonal bundles and skew-hamiltonian matrices}

\author{Roland Abuaf}
\address{Department of Mathematics, Imperial College London, 180 Queen's Gate, London SW7 2AZ, UK}
\curraddr{}
\email{r.abuaf@imperial.ac.uk}

\author{Ada Boralevi}
\address{Scuola Internazionale Superiore di Studi Avanzati, via Bonomea 265, 34136 Trieste, Italy}
\curraddr{}
\email{ada.boralevi@sissa.it}

\thanks{Research partially supported by the Research Network Program GDRE-GRIFGA. A. Abuaf supported by EPSRC programme grant EP/G06170X/1. 
A.Boralevi supported by SISSA, MIUR funds, PRIN 2010-2011 project 
\lq\lq Geometria delle variet\`a algebriche'', and by Universit\`a degli Studi di Trieste--FRA 2011.} 

\subjclass[2010]{14J60, 15B99}

\keywords{Orthogonal vector bundles, moduli spaces, skew-Hamiltonian matrices}


\begin{abstract}
	Using properties of skew-Hamiltonian matrices and classic connectedness results, we prove that the 
	moduli space $M_{ort}^0(r,n)$ of stable rank $r$ orthogonal vector bundles on $\PP^2$, with Chern classes 
	$(c_1,c_2)=(0,n)$, and trivial splitting on the general line, is smooth irreducible of dimension $(r-2)n-{r \choose 2}$ 
	for specific values of $r$ and $n$. 
\end{abstract}

\maketitle

\tableofcontents

\newpage 

\section{Introduction}

A holomorphic vector bundle on a projective variety is called \emph{orthogonal} if it is isomorphic 
to its dual via a symmetric map. While there is a vast literature about orthogonal bundles on curves (let us quote at least \cite{hulek_orthogonal}, \cite{ramanathan}, \cite{beauville_orthogonal}, and \cite{serman}), very little is known about the case of surfaces. In this work we are interested in the study of stable orthogonal bundles 
on $\PP^2$ with fixed invariants.

In the celebrated paper \cite{Hul}, the author described the moduli space $M(r,n)$ of stable rank $r$ vector bundles on $\PP^2$ with Chern classes 
$(c_1,c_2)=(0,n)$ and $2 \le r \le n$, and proved its smoothness and irreducibility. In \cite{simpl} Ottaviani used Hulek's techniques to 
show that the same properties hold for $M_{sp}(r,n)$, the moduli space of symplectic bundles with the same invariants. 
The generalization is quite straightforward, and the question whether or not these same techniques 
could be applied to the orthogonal case of $M_{ort}(r,n)$ arose naturally, cf. \cite[Problem 7.8]{simpl}. As it turns out, it is definitely not the case.

The smoothness of the moduli space $M_{ort}(r,n)$ is very easy to prove, but the same cannot be said about its (potential) irreducibility. 
A first obstacle is caused by the fact that when deformed on a line, orthogonal bundles behave very differently from their symplectic and unstructured equivalent. 
Indeed, while in the latter two cases the only rigid bundle is the trivial bundle, in the orthogonal case there are two rigid bundles, 
namely the trivial one $\OO_{\PP^1}^r$ and $\OO_{\PP^1}(1) \oplus \OO_{\PP^1}^{r-2} \oplus \OO_{\PP^1}(-1)$, and bundles $\bigoplus_i \OO_{\PP^1}(a_i)$ 
whose value of $\sum_i a_i$ mod $2$ (known as Mumford invariant) is different do not deform into each other. 
This behavior is somehow expected and is connected with the contrasting properties of the group $\SO(r)$ for even and odd values of $r$.

On the one hand, this result forces us to restrict our attention to the moduli space of orthogonal bundles having the same invariants as above and trivial splitting on the general line, that we 
denote by $M^0_{ort}(r,n)$. On the other hand, a careful analysis of this case allows us to extend the notion of Mumford invariant to the case of $\PP^2$.

In the attempt to study irreducibility properties of $M^0_{ort}(r,n)$, we apply techniques that are similar to \cite{Hul} and \cite{simpl}. 
Using standard fibration arguments, we are able to reduce ourselves to the proof of the irreducibility 
of the space $\{(A,B) \in \sk \times \sk\:|\: \rk (AJB-BJA)=r\}$, where $V$ is a complex vector space of even dimension $n$. 

The technical difficulties that this task presents for values of $r$ smaller than $n$ 
are a lot higher than one could expect. By combining a description of the commutator of two skew-Hamiltonian matrices 
together with a strong connectedness result, we further reduce the problem to the estimate of the dimension of the singular locus of highly non-general hyperplane 
sections of the determinantal variety $\syrc$ of symmetric matrices of rank at most $r$. (The secant variety to the Veronese variety if one prefers this terminology.)
This estimate is possible for the cases $r=n$ and $r=n-1$, and gives, respectively, bounds $n \ge 4$ and $n \ge 8$. Our main result is:

\vskip.03in
\textbf{Theorem \ref{teorema 7.7}.} \emph{Let $n$ be an even integer. 
The moduli space $M^0_{ort}(r,n)$ of rank $r$ stable orthogonal vector bundles on $\PP^2$, with Chern classes $(c_1,c_2)=(0,n)$, 
and trivial splitting on the general line, is smooth irreducible of dimension $(r-2)n-{r\choose 2}$ for $r=n$ and $n \ge 4$, and $r=n-1$ and $n\ge 8$.}
\vskip.03in

For small values of $n$ the behavior is even less predictable, as it is explained in section 5. 

\vskip.1in

The paper is structured as follows: in section 2 we introduce the moduli spaces $M(r,n)$, $M_{sp}(r,n)$ and $M_{ort}(r,n)$ of unstructured, symplectic and orthogonal 
stable rank $r$ vector bundles on $\PP^2$ with Chern classes $(c_1,c_2)=(0,n)$ and $2 \le r \le n$. We give the monad construction and prove that $M_{ort}$ is smooth of dimension 
$(r-2)n - {r \choose 2}$. In section 3 we concentrate on the case of those bundles with trivial splitting on the general line, that we denote by $M_{ort}^0(r,n)$: we deduce some interesting consequences of the trivial splitting assumption, describe the key technical difficulty and proceed to give a proof of irreducibility in some specific cases, postponing the proof of the 
key lemma to section 4. There, in section 4, skew-Hamiltonian matrices and their properties are introduced, and the key lemma is proved.
Section 5 contains a detailed description of the image of the map sending a pair of skew-symmetric matrices $(A,B)$ to the symmetric matrix $AJB-BJA$, where $J$ is the standard 
symplectic form. Section 6 is devoted to some explicit examples, open questions and explicit remarks.

\vskip.1in

\textbf{Acknowledgements.} The authors would like to thank Giorgio Ottaviani 
for suggesting the problem and for countless interesting discussions. 
The second named author would like to thank Alex Massarenti, Emilia Mezzetti, and Sofia Tirabassi 
for suggestions and for their infinite patience.

\section{The moduli space of orthogonal bundles on $\PP^2$}

\subsection{Notation}
We work over the field $\C$ of complex numbers. Given a $d$-dimensional 
vector space $W$ over $\C$, we denote by $W^*= \Hom(W,\C)$ its dual, and 
we fix a determinant form so that $W \simeq W^*$. 
The projective space $\PP^{d-1}=\PP (W)$ is the space of lines through $0$, thus $\HH^0(\OO_{\PP W}(1)) = W^*$. 

Given a vector bundle $E$ on $\PP^{d-1}$ we denote by $E(t)$ the tensor product $E \otimes \OO_{\PP^{d-1}}(t)$, for any integer $t$. 

We use lower case letters to denote the dimension of a cohomology group; for any vector bundle $E$ on $\PP^2$, $\hh^i(E):=\dim \HH^i(\PP^2,E)$.

\subsection{The monad construction}
Let $M(r,n)$ be the moduli space of stable vector bundles $E$ on $\PP^2$, with rank $r \geq 2$ and Chern classes 
$(c_1(E),c_2(E))=(0,n)$. In \cite[Section 2.1]{Hul} the author proved that  $M(r,n)$ is non-empty if and only if $r \leq n$, 
hence we will always restrict to this case.

\begin{lem}\cite[Lemma 1.1.2]{Hul}\label{h1=n}
If $E$ is an element of $M(r,n)$ then $\chi(E(-i))=-\hh^1(E(-i))=-n$ for $i=1$ and $2$. 
In particular its value is independent from $r$.
\end{lem}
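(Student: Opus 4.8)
The plan is to reduce the statement to a Riemann--Roch computation after establishing two cohomology vanishings, exploiting the stability of $E$. Concretely, for $i=1,2$ I would show that $\HH^0(E(-i))=\HH^2(E(-i))=0$, so that $\chi(E(-i))=-\hh^1(E(-i))$, and then evaluate $\chi(E(-i))$ directly via Hirzebruch--Riemann--Roch, checking that the answer is $-n$ for both values of $i$.

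First I would dispose of $\HH^0$. Since $E\in M(r,n)$ is stable with $c_1(E)=0$, it has slope $\mu(E)=0$, and $E(-i)$ has slope $-i<0$. A nonzero section of $E(-i)$ would produce an inclusion $\OP \hookrightarrow E(-i)$, equivalently a line subsheaf $\OP(i)\hookrightarrow E$ of positive slope $i>0=\mu(E)$, contradicting stability. Hence $\hh^0(E(-i))=0$ for $i\ge 1$. For $\HH^2$ I would invoke Serre duality on $\PP^2$, where $K_{\PP^2}=\OP(-3)$, to get
\[
\HH^2(E(-i))\simeq \HH^0\pga E^*(i-3)\pgc^*.
\]
The dual $E^*$ of a stable bundle is again stable with $c_1(E^*)=0$, so the same slope argument applies to $E^*(i-3)$, whose slope is $i-3<0$ for $i=1,2$. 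Thus $\hh^2(E(-i))=0$ as well, giving $\chi(E(-i))=-\hh^1(E(-i))$.

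It then remains to compute $\chi(E(-i))$. Using $\operatorname{ch}(E)=r-nH^2$ (from $c_1=0$, $c_2=n$, with $H$ the hyperplane class and $\int_{\PP^2}H^2=1$) together with $\operatorname{ch}(\OP(-i))=e^{-iH}$ and $\operatorname{td}(\PP^2)=1+\tfrac32 H+H^2$, Hirzebruch--Riemann--Roch yields after extracting the $H^2$-coefficient
\[
\chi(E(-i))=\frac{r}{2}(i-1)(i-2)-n.
\]
The decisive observation is that the rank-dependent factor $(i-1)(i-2)$ vanishes for $i=1$ and $i=2$, so $\chi(E(-i))=-n$ in both cases, independently of $r$. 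Combining this with the vanishing above gives $-\hh^1(E(-i))=-n$, as claimed.

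I do not expect a serious obstacle here: the content is a routine characteristic-class calculation, and the only point requiring a little care is the $\HH^2$-vanishing, where one must pass to $E^*$ and confirm that its twist still has negative slope for both $i=1$ and $i=2$. The mild ``surprise'' that makes the result clean — the $r$-independence — is entirely encoded in the factorization $(i-1)(i-2)$ appearing in the Riemann--Roch output.
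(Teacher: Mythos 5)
Your proof is correct, and it is essentially the standard argument behind this statement: the paper itself gives no proof but cites \cite[Lemma 1.1.2]{Hul}, where the result is obtained exactly this way --- stability kills $\HH^0(E(-i))$, Serre duality plus stability of $E^*$ kills $\HH^2(E(-i))$, and Riemann--Roch then gives $\chi(E(-i))=\frac{r}{2}(i-1)(i-2)-n=-n$ for $i=1,2$. Both the slope computations and the extraction of the $H^2$-coefficient in your Chern character calculation check out.
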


\begin{lem}\cite{Hul}\label{teorema 7.2}
Let $E$ be an element of $M(r,n)$, and set $\PP^2=\PP (U)$. Denote by $V:=\HH^1(E(-1))$, 
which is a vector space of dimension $n$. Then $E$ is the cohomology bundle of the following monad:
\begin{equation}\label{monade}
I \otimes \OO_{\PP^2} \xrightarrow{g} V^* \otimes \Omega_{\PP^2}^1(2) \xrightarrow{f} V \otimes \OO_{\PP^2}(1),
\end{equation}

where $f \in U \otimes V \otimes V$ is the natural multiplication map and $I:=\HH^1(E(-3))$ has dimension $n-r$.
\end{lem}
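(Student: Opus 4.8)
The plan is to read off the monad directly from the Beilinson spectral sequence. Writing $\PP^2=\PP(U)$ with $\dim U=3$, recall the classical Beilinson spectral sequence: for any coherent sheaf $F$ on $\PP^2$ there is a spectral sequence with $E_1$-page
\[
E_1^{p,q}=\HH^q(F(p))\otimes\Omega_{\PP^2}^{-p}(-p),\qquad -2\le p\le 0,\ 0\le q\le 2,
\]
converging to $F$ in total degree $p+q=0$ and to $0$ otherwise. I would apply this to $F=E(-1)$, so that the three columns $p=0,-1,-2$ carry the cohomology of $E(-1)$, $E(-2)$ and $E(-3)$ respectively, paired with the sheaves $\OO_{\PP^2}$, $\Omega_{\PP^2}^1(1)$ and $\OO_{\PP^2}(-1)$.

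The first task is to kill every row except $q=1$. For the vanishing of $\HH^0$, note that $E(-1),E(-2),E(-3)$ all have negative slope, so stability of $E$ forces $\HH^0(E(-i))=0$ for $i=1,2,3$. For the vanishing of $\HH^2$, Serre duality gives $\HH^2(E(-i))\cong\HH^0(E^*(i-3))^*$, and since $E^*$ is again stable of slope $0$ this vanishes for $i\le 3$. Thus only the row $q=1$ survives, with terms $\HH^1(E(-1))=V$ and $\HH^1(E(-2))$, both of dimension $n$ by Lemma~\ref{h1=n}, together with $\HH^1(E(-3))=I$; a Riemann--Roch computation gives $\chi(E(-3))=r-n$, whence $\dim I=\hh^1(E(-3))=n-r$. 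Because a single row is nonzero, all higher differentials vanish, the spectral sequence degenerates at $E_2$, and the abutment identifies $E(-1)$ with the cohomology of the three-term complex
\[
I\otimes\OO_{\PP^2}(-1)\longrightarrow \HH^1(E(-2))\otimes\Omega_{\PP^2}^1(1)\longrightarrow V\otimes\OO_{\PP^2};
\]
twisting by $\OO_{\PP^2}(1)$ then yields a complex with cohomology $E$ whose terms are exactly those of \eqref{monade}.

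It remains to identify the differentials and check that \eqref{monade} is a genuine monad. Since the total complex has vanishing cohomology outside the middle spot, $g$ is injective and $f$ is surjective as sheaf maps; as $E$ is locally free the standard monad formalism upgrades $g$ to a subbundle inclusion, so that $E=\Ker f/\im g$ is the cohomology in the usual sense. For the shape of $f$ I would invoke the functoriality of the Beilinson construction: its differentials are induced by the Koszul resolution of the diagonal, so $f$ is the image of the multiplication map $U^*\otimes\HH^1(E(-2))\to\HH^1(E(-1))=V$ under the canonical identifications $\HH^0(\OO_{\PP^2}(1))=U^*$ and $\Hom(\Omega_{\PP^2}^1(2),\OO_{\PP^2}(1))\cong U$. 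Identifying the $n$-dimensional space $\HH^1(E(-2))$ with $V^*$ records $f$ as an element of $U\otimes V\otimes V$, namely the natural multiplication map. I expect this last step to be the only delicate one: the cohomology vanishings and the degeneration are routine, whereas tracking the Beilinson differentials precisely enough to recognise $f$ as multiplication by linear forms --- and fixing the identification of the middle term with $V^*$ rather than with merely some $n$-dimensional space --- is where the genuine bookkeeping lies.
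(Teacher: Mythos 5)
Your proof is correct and takes essentially the same route as the paper's: both read the monad off the Beilinson spectral sequence for $E(-1)$ after killing the $q=0$ and $q=2$ rows. Your write-up merely fills in the details the paper's sketch leaves implicit — the stability/Serre-duality vanishings, the Riemann--Roch count $\dim I = n-r$, the fiberwise (monad) conditions, and the Koszul identification of $f$ with the multiplication map.
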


\begin{proof}
The proof is a standard application of Beilinson Theorem; we give a sketch for the reader's convenience. After Lemma \ref{h1=n}, we can write down the Beilinson 
table of $E$:
$$
\begin{array}{r|ccc}
\HH^2(E(t))&\:0&\:0&\:0\\
\HH^1(E(t))&\:I&\:V^*&\:V\\
\HH^0(E(t))&0&0&0\\
\hline
t&-3&-2&-1
\end{array}
$$

Thus the monad \eqref{monade} is the spectral sequence entailed by Beilinson's result, whose cohomology abutts to $E$. 

The map $f$ is an element of the vector space: 
\begin{align}
	\nonumber \Hom(V^* \otimes \Omega_{\PP^2}^1(2), V \otimes \OO_{\PP^2}(1)) &= V \otimes V \otimes \Hom(\Omega_{\PP^2}^1(2), \OO_{\PP^2}(1))\\
	\nonumber &=V \otimes V \otimes U. \qedhere
\end{align}
\end{proof}

As remarked in \cite[Proposition 7.3]{simpl}, two simple bundles $E(f)$ and $E(f')$ as in Lemma \ref{teorema 7.2} are 
isomorphic if and only if $f$ and $f'$ are $\SL(V)$-equivalent. 

Using the monad (\ref{monade}) we compute that: 
$$
\HH^0(f): V^* \otimes \HH^0(\Omega_{\PP^2}^1(2)) \rightarrow V \otimes \HH^0(\OO_{\PP^2}(1))
$$ 

Now $\HH^0(\Omega_{\PP^2}^1(2))=\Lambda^2 U^* \simeq U$, once the determinant form is fixed, and $\HH^0(\OO_{\PP^2}(1))=U^*$, so the map 
$\HH^0(f)$ is in fact:
\begin{equation}\label{H^0(f)} 
    \HH^0(f): V^* \otimes U\rightarrow V \otimes U^*,
\end{equation}

and it can be identified with the contraction operator that from an element $f \in U \otimes V \otimes V$ induces an element:
$$S_f: V^* \otimes U \rightarrow \Lambda^2U \otimes V \simeq U^* \otimes V$$

through the following steps:
$$
\xymatrix{
U \otimes V^* \ar[d]^{\otimes f} \ar@/_8pc/[dddd]_{S_f}\\
U \otimes V^* \otimes U \otimes V \otimes V \ar[d]^{\hbox{reordering}}\\
(U \otimes U) \otimes (V^* \otimes V) \otimes V  \ar[d]^{\hbox{projection}}\\
\Lambda^2U \otimes (V^* \otimes V) \otimes V \ar[d]^{\hbox{contraction}}\\
\Lambda^2U \otimes V}$$

In particular, if $P,Q$ and $R$ are the three $n \times n$ ``slices'' of $f$, then $S_f=\HH^0(f)$ can be written as:
\begin{equation}\label{matrix rep}
    \HH^0(f)=\left[
\begin{array}{ccc}
0&P&Q\\
-P&0&R\\
-Q&-R&0
\end{array}
\right]
\end{equation}

\begin{lem}\label{rk of H^0(f)}
    With the same notation as above, $\rk \HH^0(f)=2n+r$.
\end{lem}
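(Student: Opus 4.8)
The plan is to read $\HH^0(f)$ as the map induced on global sections by the bundle morphism $f$ of the monad \eqref{monade}, and to compute its kernel and cokernel via the cohomology of the monad rather than struggling with the block matrix \eqref{matrix rep} directly. Since by \eqref{H^0(f)} the map $\HH^0(f)$ goes between the two $3n$-dimensional spaces $V^* \otimes U$ and $V \otimes U^*$, rank--nullity reduces everything to showing that $\dim \Ker \HH^0(f) = n-r$; the claimed value $2n+r = 3n-(n-r)$ then follows at once.

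First I would split the monad into the two short exact sequences of bundles
\begin{equation*}
0 \to K \to V^* \otimes \Omega_{\PP^2}^1(2) \xrightarrow{f} V \otimes \OO_{\PP^2}(1) \to 0
\end{equation*}
and
\begin{equation*}
0 \to I \otimes \OO_{\PP^2} \xrightarrow{g} K \to E \to 0,
\end{equation*}
where $K := \Ker f$ is a rank-$n$ bundle. Passing to the long exact cohomology sequence of the first one and using the vanishing $\HH^1(\Omega_{\PP^2}^1(2)) = 0$ (which I would get from the twisted Euler sequence, or directly from Bott's formula), I obtain $\HH^1(V^* \otimes \Omega_{\PP^2}^1(2)) = 0$, so that the six-term sequence truncates to $0 \to \HH^0(K) \to V^*\otimes U \xrightarrow{\HH^0(f)} V\otimes U^* \to \HH^1(K) \to 0$. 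This identifies $\Ker \HH^0(f) \simeq \HH^0(K)$ and $\coker \HH^0(f) \simeq \HH^1(K)$.

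Next I would feed the second short exact sequence into cohomology. As $I \otimes \OO_{\PP^2}$ is trivial, its $\HH^1$ and $\HH^2$ vanish, while $\HH^0(E) = 0$ because $E$ is stable of slope $0$ and hence carries no nonzero section. The long exact sequence then collapses to $\HH^0(K) \simeq \HH^0(I \otimes \OO_{\PP^2}) = I$ together with $\HH^1(K) \simeq \HH^1(E)$. Since $\dim I = n-r$ by Lemma \ref{teorema 7.2}, this gives $\dim \Ker \HH^0(f) = n-r$ and therefore $\rk \HH^0(f) = 2n+r$. As an independent check one may compute $\hh^1(E) = n-r$ from Riemann--Roch (using $\HH^0(E)=\HH^2(E)=0$), which matches $\dim \coker \HH^0(f) = 3n-(2n+r)$.

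The only genuinely delicate inputs are the two vanishing statements: $\HH^1(\Omega_{\PP^2}^1(2)) = 0$, which is precisely what decouples $\HH^0(f)$ from the higher cohomology of the middle term of the monad, and $\HH^0(E)=0$, which is exactly the place where stability of $E$ is used. Everything else is bookkeeping in the two long exact sequences, so I expect no serious obstacle beyond checking that the identifications are the intended ones and that no connecting homomorphism survives to spoil them.
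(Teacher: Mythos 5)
Your proof is correct and follows essentially the same route as the paper: both arguments run the monad's display through cohomology and use stability (via $\HH^0(E)=0$) to pin down $\Ker \HH^0(f)$ as the $(n-r)$-dimensional space $I$. The only cosmetic difference is that you identify the kernel exactly using the vanishing $\HH^1(\Omega^1_{\PP^2}(2))=0$, whereas the paper gets the inclusion $\Ker\HH^0(f)\subseteq\HH^0(\Ker f)$ for free from left-exactness of global sections and argues by contradiction from the two-sided bound.
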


\begin{proof} The same proof as in \cite{simpl} and \cite[Lemma 1.3]{Hul} applies. Again, we give a sketch for the reader's convenience. 
	From the monad (\ref{monade}) the kernel of $\HH^0(f)$ 
contains the vector space $I$ of dimension $n-r$, hence:
$$\rk \HH^0(f) \leq 3n -(n-r)=2n+r.$$
Looking at the display associated to (\ref{monade}):

$$\xymatrix{&&0 \ar[d]&0\ar[d]&\\
0\ar[r]&I \otimes \OO_{\PP^2} \ar@{=}[d]\ar[r]& \Ker(f) \ar[d]\ar[r]& E \ar[d]\ar[r]&0\\
0\ar[r]&I \otimes \OO_{\PP^2} \ar[r]^-{g}& V^* \otimes \Omega_{\PP^2}^1(2)\ar[d]^f \ar[r]& \coker(g) \ar[d]\ar[r]&0\\
&&V \otimes \OO_{\PP^2}(1) \ar[d]\ar@{=}[r]& V \otimes \OO_{\PP^2}(1)\ar[d]&\\
&&0&0&}$$

we get the cohomology sequence for the bundle $E$. If $\rk \HH^0(f) < 2n+r$, then $\dim(\HH^0(\Ker f))> \dim(I)=n-r$ and thus 
$\HH^0(E) \neq 0$, which is a contradiction because $E$ is stable.
\end{proof}

\subsection{Unstructured, symplectic and orthogonal bundles}

\begin{defn} 
A vector bundle $E$ is called \emph{orthogonal} if there is an isomorphism $\alpha: E \rightarrow E^*$ such that 
$\tr{\alpha}=\alpha$. If $\tr{\alpha}=-\alpha$ then $E$ is called \emph{symplectic}. If $E$ is neither orthogonal nor symplectic, it is said to be 
\emph{unstructured}\footnote{One could argue that in order to indicate a bundle that is neither orthogonal nor symplectic, the term ``general'' is preferable to ``unstructured''. 
Nevertheless we chose to reserve the former to indicate---as usual in algebraic geometry---a claim that holds for all elements away from a countable union of Zariski closed subsets in a parameter space.}. 
\end{defn}

\begin{rem}\label{alpha unique} If $E$ is orthogonal then $S^2E$ contains $\OO_{\PP^2}$ as a direct summand. 
	If moreover $E$ is stable, then it is simple and this forces $\HH^0(S^2E)=\C$, so the isomorphism $\alpha$ is unique up 
to scalar. The same remark holds for symplectic bundles, once we substitute the symmetric power $S^2 E$ with the skew-symmetric $\Lambda^2 E$.
\end{rem}

\begin{thm}
The bundle $E(f)$ cohomology of the the monad \eqref{monade} is:
\begin{itemize} 
	\item orthogonal if and only if the map $f \in U \otimes \sk$;
	\item symplectic if and only if the map $f \in U \otimes \sy$.
\end{itemize}
\end{thm}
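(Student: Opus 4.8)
The plan is to relate the existence of an orthogonal (resp.\ symplectic) structure on $E(f)$ to a symmetry of the linear-algebra datum $f$, passing through the self-duality of the monad \eqref{monade}. First I would observe that dualizing \eqref{monade} yields a monad whose cohomology is $E(f)^*$: using $\T_{\PP^2}\simeq\Omega_{\PP^2}^1(3)$, so that $(\Omega_{\PP^2}^1(2))^*\simeq\Omega_{\PP^2}^1(1)$, the dual of the middle term is again, up to the twist $\OO_{\PP^2}(1)$, of the form $V\otimes\Omega_{\PP^2}^1(2)$. Hence $E(f)^*$ is the cohomology bundle of a monad of the same shape as \eqref{monade}, with defining map the dual datum $\tr{f}$. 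By the remark following Lemma \ref{teorema 7.2} (isomorphic simple cohomology bundles correspond to $\SL(V)$-equivalent maps), an isomorphism $\alpha\colon E(f)\to E(f)^*$ is the same as an isomorphism carrying $f$ to $\tr{f}$, and the condition $\tr{\alpha}=\pm\alpha$ translates into the corresponding symmetry of the associated map $\HH^0(f)$ of \eqref{H^0(f)}, once source and target are identified through the fixed forms on $U$ and $V$.

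The technical heart is then a transpose computation on the explicit block matrix \eqref{matrix rep}. Writing $\HH^0(f)$ in $3\times 3$ block form with slices $P,Q,R$ and transposing each block in place, one gets a matrix with blocks $-\tr{P},-\tr{Q},-\tr{R}$ above the diagonal and $\tr{P},\tr{Q},\tr{R}$ below it. From this one reads off at once that $\HH^0(f)$ is \emph{symmetric} precisely when $P,Q,R$ are all skew-symmetric, and \emph{skew-symmetric} precisely when $P,Q,R$ are all symmetric. This sign reversal is forced by the skew $3\times 3$ block pattern of \eqref{matrix rep}, which is nothing but the copy of $\Lambda^2 U\simeq\HH^0(\Omega_{\PP^2}^1(2))$ entering the construction of $S_f$. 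Thus the symmetry of $f$ in the $V\otimes V$ factor and the symmetry of $\HH^0(f)$ as a $3n\times 3n$ matrix are always \emph{opposite}.

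To conclude I would match the two symmetry types. Combining the first step (symmetry of $\alpha$ $=$ symmetry of $\HH^0(f)$) with the block computation gives: $E(f)$ orthogonal $\iff$ $\HH^0(f)$ symmetric $\iff$ $P,Q,R$ skew $\iff$ $f\in U\otimes\sk$; and $E(f)$ symplectic $\iff$ $\HH^0(f)$ skew $\iff$ $P,Q,R$ symmetric $\iff$ $f\in U\otimes\sy$. The symplectic half coincides with Ottaviani's computation in \cite{simpl}, which can be invoked to calibrate the correspondence between the symmetry of $\alpha$ and that of $\HH^0(f)$. For the converse (existence) direction, a datum $f\in U\otimes\sk$ makes $\HH^0(f)$ symmetric by the block computation, and from this symmetric matrix, together with the fact that $\rk\HH^0(f)=2n+r$ of Lemma \ref{rk of H^0(f)}, one reconstructs the symmetric isomorphism $\alpha\colon E(f)\to E(f)^*$ compatible with the monad, so that $E(f)$ is genuinely orthogonal.

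The step I expect to be the main obstacle is pinning down that there is no spurious sign in the passage from the abstract form $\alpha$ on $E(f)$ to the matrix $\HH^0(f)$: concretely, controlling the Serre-duality identification $\HH^1(E(f)^*(-1))\simeq\HH^1(E(f)(-2))^*$ used to put the dual monad in standard form, and choosing the identifications $V\simeq V^*$ and $U\simeq U^*$ compatibly with the wedge pairing $\Omega_{\PP^2}^1\otimes\Omega_{\PP^2}^1\to\Omega_{\PP^2}^2$. It is exactly this bookkeeping that manufactures the parity reversal relative to the naive guess, and hence the fact that the orthogonal (rather than the symplectic) case corresponds to skew-symmetric slices; the block transpose itself is routine once the framework is fixed.
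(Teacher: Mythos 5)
Your proposal is correct in outline, but it pins down the crucial sign by a genuinely different mechanism than the paper. The paper's proof never passes through the matrix $\HH^0(f)$: from a symmetric $\alpha\colon E\to E^*$ it constructs the pairing $(\varphi,\psi)_{E(-1)}:=\langle\varphi,\alpha(-2)\psi\rangle_{E(-1)}$ between $\HH^1(E(-1))$ and $\HH^1(E(-2))$ via Serre duality, and the parity reversal comes from the skew-commutativity of the cup product in odd degree (quoted from \cite{barth}) together with the self-adjointness of the multiplication map $f$; the symplectic half is simply cited from \cite{simpl}. In your argument the reversal is instead located in the block-skew pattern of \eqref{matrix rep} (the $\Lambda^2 U$ factor), and the residual sign relating the parity of $\alpha$ to the parity of $\HH^0(f)$ is not computed but calibrated against Ottaviani's symplectic theorem. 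That calibration is legitimate: the passage from $\alpha$ to the induced structure on the monad data is linear and natural, so transposition on the two sides can differ only by one universal sign, independent of $E$ and of the parity of $\alpha$; the symplectic case then forces that sign, and the orthogonal statement follows from your (correct) block computation. What your route buys is insulation from exactly the sign bookkeeping you flag as the main obstacle — you must still set up the functorial framework, i.e.\ naturality of the Beilinson monad and the Serre-duality identifications $\HH^1(E^*(-1))\cong\HH^1(E(-2))^*$ and $\HH^1(E^*(-2))\cong\HH^1(E(-1))^*$, but you never evaluate a sign; what it costs is that you need \cite{simpl} for both halves of the statement, whereas the paper's pairing argument handles the orthogonal half on its own.

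Two points to repair in a write-up. First, dualizing \eqref{monade} yields $V^*\otimes\OO_{\PP^2}(-1)\to V\otimes\Omega_{\PP^2}^1(1)\to I^*\otimes\OO_{\PP^2}$, a monad with respect to the other exceptional collection, not one of the same shape as \eqref{monade}: twisting by $\OO_{\PP^2}(1)$ would present $E^*(1)$, not $E^*$. So you should instead apply Lemma \ref{teorema 7.2} to $E^*$ itself (which lies in $M(r,n)$) and identify its datum with $\tr{f}$ through Serre duality before invoking the $\SL(V)$-equivalence remark. Second, your closing sentence attributes the parity reversal to the duality bookkeeping, but in your own setup the reversal comes entirely from the block transpose; the bookkeeping must be shown to contribute no sign at all, and that is precisely what the calibration against the symplectic case certifies.
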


\begin{proof}
The symplectic case is proved in \cite[Theorem 7.2]{simpl}. For the orthogonal one, we simply generalize the argument. 
A similar statement can also be found (without proof) in \cite[1.7.3]{Hul}. 

Suppose first that the bundle is orthogonal, and that we have an isomorphism $\alpha: E \rightarrow E^*$ such that $\alpha=\tr{\alpha}$. 
Then we define the pairing:
\begin{align}
    \label{pairing} \HH^1(E(-1)) \otimes \HH^1(E(-2)) &\rightarrow \C\\
    \nonumber \:\:\:\:\:\:\:\:\varphi\:\:\:\:\:\: \:\:\:\otimes\:\:\:\:\:\: \:\:\:\psi\:\:\:\:\:\:\:\:\: &\mapsto (\varphi,\psi)_{E(-1)}
\end{align}
as follows. First we recall Serre duality:
\begin{align}
    \nonumber \HH^1(E(-1)) \otimes \HH^1(E^*(-2)) &\rightarrow \C\\
    \nonumber\:\:\:\:\:\:\:\: \varphi\:\:\:\:\:\:\:\: \otimes \:\:\:\:\:\:\:\:\psi^* \:\:\:\:\:\:\:\:&\mapsto <\varphi,\psi^*>_{E(-1)}
\end{align}
which is induced by cup product. Since cup product is skew-commutative in odd dimension, we have that $<\varphi,\psi^*>_{E(-1)}=-<\psi^*,\varphi>_{E(-2)}$.
(For details, see \cite[Prop 1]{barth}.)

Now define the pairing (\ref{pairing}) by setting, with obvious notation:
\begin{equation}\label{def pairing}
(\varphi,\psi)_{E(-1)}:=<\varphi,\alpha(-2)\psi>_{E(-1)}
\end{equation}

Note that the natural multiplication map $f$ is its own adjoint with respect to the pairing (\ref{def pairing}), so if $\alpha$ 
is symmetric $f$ is skew-symmetric (which is our orthogonal case) and conversely in the symplectic case \cite{simpl}. 

The converse uses a similar argument and we omit it.
\end{proof}

In \cite[Section 2.1]{Hul} it is shown that, when non-empty, $M(r,n)$ is a smooth irreducible variety of dimension $2rn-r^2+1$.

Denote by $M_{ort}(r,n)$ (respectively $M_{sp}(r,n)$) the moduli space of orthogonal (resp. symplectic) 
elements of $M(r,n)$. 

In \cite{simpl} the author proved that, when non-empty (in particular when $r$ is even), the space $M_{sp}(r,n)$ is a smooth irreducible variety 
of dimension $(r+2)n-{r+1 \choose 2}$. 

In this work we wish to investigate smoothness and irreducibility properties of the moduli space $M_{ort}(r,n)$.

\subsection{Smoothness results, degeneration arguments}

Recall that the adjoint representation for the orthogonal group $\SO(r)$ is isomorphic to the wedge power $\Lambda^2\C^r$. 
Also, any $E \in M_{ort}(r,n)$ is simple, and we have seen in Remark \ref{alpha unique} that $S^2E$ contains $\OO_{\PP^2}$ as a 
direct summand, therefore we must have $\hh^0(\Lambda^2E)=0$. 
By Serre duality we also have $\hh^2(\Lambda^2E)=\hh^0(\Lambda^2E(-3))=0$. 
Hence $\hh^1(\Lambda^2E)=-\chi(\Lambda^2E)$ and applying Hirzebruch-Riemann-Roch formula:
$$\mbox{$\hh^1(\Lambda^2E)=-\chi(\Lambda^2E)=c_2(\Lambda^2E)-{r \choose 2}$}.$$

By applying the splitting principle we compute that $c_2(\Lambda^2E)=(r-2)n$.
It follows that:

\begin{lem}\label{lemma dim} 
	When non-empty, the moduli space $M_{ort}(r,n)$ is smooth of dimension $(r-2)n-{r \choose 2}$.
\end{lem}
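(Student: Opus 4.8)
The plan is to run the standard deformation-theoretic argument, now adapted to the orthogonal structure group. An orthogonal bundle $E$ carries a reduction of its structure group from $\GL(r)$ to $\SO(r)$, and the infinitesimal deformations compatible with this reduction are governed by the cohomology of the associated adjoint bundle. Since the adjoint representation of $\SO(r)$ is $\Lambda^2\C^r$, the adjoint bundle is $\Lambda^2E$: one uses the isomorphism $\alpha\colon E\xrightarrow{\sim}E^*$ to identify the skew-symmetric endomorphisms of $E$ with the summand $\Lambda^2E$ inside $\End E=E\otimes E^*\simeq E\otimes E=S^2E\oplus\Lambda^2E$. Consequently the Zariski tangent space to $M_{ort}(r,n)$ at $[E]$ is $\HH^1(\PP^2,\Lambda^2E)$, and the obstructions to smoothness live in $\HH^2(\PP^2,\Lambda^2E)$.

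First I would establish smoothness by proving that this obstruction space vanishes. By Serre duality on $\PP^2$ we have $\hh^2(\Lambda^2E)=\hh^0\bigl((\Lambda^2E)^*(-3)\bigr)$; using $E\simeq E^*$ this equals $\hh^0(\Lambda^2E(-3))$, which injects into $\hh^0(\Lambda^2E)$ upon multiplication by a cubic. Since $E$ is stable, hence simple, the space $\HH^0(\End E)\simeq\HH^0(S^2E)\oplus\HH^0(\Lambda^2E)$ is one-dimensional; as $S^2E$ already contains the trivial summand $\OO_{\PP^2}$ coming from $\alpha$ (Remark \ref{alpha unique}), we must have $\hh^0(\Lambda^2E)=0$, and therefore $\hh^2(\Lambda^2E)=0$. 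Unobstructedness then yields smoothness at every point.

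For the dimension, smoothness gives $\dim M_{ort}(r,n)=\hh^1(\Lambda^2E)$ at each point, and with $\hh^0=\hh^2=0$ this equals $-\chi(\Lambda^2E)$. I would compute the latter by Hirzebruch--Riemann--Roch together with the splitting principle: writing $a_1,\dots,a_r$ for the Chern roots of $E$, the bundle $\Lambda^2E$ has Chern roots $a_i+a_j$ $(i<j)$, so $c_1(\Lambda^2E)=(r-1)c_1(E)=0$ and a short symmetric-function computation gives $c_2(\Lambda^2E)=(r-2)n$. Feeding $\rk\Lambda^2E={r\choose2}$, $c_1=0$, $c_2=(r-2)n$ into Riemann--Roch produces $\chi(\Lambda^2E)={r\choose2}-(r-2)n$, hence the claimed dimension $(r-2)n-{r\choose2}$.

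The main obstacle is the identification carried out in the first paragraph: one must verify that the tangent and obstruction spaces to the \emph{orthogonal} moduli functor are genuinely $\HH^1$ and $\HH^2$ of $\Lambda^2E$, and not of the full $\End E$. Concretely, this means checking that the cup-product obstruction map respects the splitting $\End E=S^2E\oplus\Lambda^2E$ and restricts to the skew summand, i.e. that the deformations preserving the symmetric form $\alpha$ are precisely the classes in $\HH^1(\Lambda^2E)$. Once this Lie-algebra bookkeeping is in place, the cohomological vanishings and the Riemann--Roch computation above are entirely routine.
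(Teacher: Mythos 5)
Your proof is correct and follows essentially the same route as the paper: identify the tangent and obstruction spaces of the orthogonal moduli problem with $\HH^1(\Lambda^2E)$ and $\HH^2(\Lambda^2E)$ via the adjoint representation of $\SO(r)$, kill $\hh^0(\Lambda^2E)$ by simplicity together with the trivial summand of $S^2E$, kill $\hh^2(\Lambda^2E)$ by Serre duality, and compute $-\chi(\Lambda^2E)=(r-2)n-{r\choose 2}$ by Hirzebruch--Riemann--Roch and the splitting principle. The only difference is that you make explicit the deformation-theoretic bookkeeping (that the orthogonal deformation functor is governed by the skew summand $\Lambda^2E$ of $\End E$), which the paper takes for granted.
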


Define:
$$M_{ort}^0(r,n):=\{E \in M_{ort}(r,n) \:|\:E_{|_\ell} = \OO_{\PP^1}^r\: \hbox{for some line}\:\ell\}.$$

Notice that by semicontinuity, if $E_{|_\ell}$ is trivial on a line $\ell$, then it is trivial on the general line $\ell$.

\begin{rem}
It is important to underline here that orthogonal bundles behave quite differently 
from their symplectic and unstructured counterparts. In those cases---with obvious notation---one has 
that $\overline{M^0(r,n)}=M(r,n)$ and $\overline{M_{sp}^0(r,n)}=M_{sp}(r,n)$. 
Hence the fact that $M^0(r,n)$ and $M_{sp}^0(r,n)$ are irreducible implies that the same is true for $M(r,n)$ and $M_{sp}(r,n)$. 
Indeed when we restrict a symplectic bundle on $\PP^1$, the only rigid bundle is the trivial one \cite[Section 9.7]{ramanathan}.

In the orthogonal case the situation is more involved. There is no restriction on the parity of the rank, hence 
we can consider both the orthogonal group $\SO(2l+1)$ (type $B_l$) and the group $\SO(2l)$ (type $D_l$). 
A $B_l$-type orthogonal bundle on $\PP^1$ is of the form $\OO \oplus \bigoplus_{i=1}^l \OO(a_i) \oplus \OO(-a_i)$, while $B_l$-type ones 
are $\bigoplus_{i=1}^l \OO(a_i) \oplus \OO(-a_i)$.

In both cases the rigid bundles are the trivial bundle and the bundle $\OO(1) \oplus \OO(-1) \oplus \OO^{2l-1}$ for $B_l$-type, 
or the trivial bundle and $\OO(1) \oplus \OO(-1) \oplus \OO^{2l-2}$ if we are in the $D_l$-type case. (We refer the reader to \cite[Section 9.5]{ramanathan} for details.)
\end{rem}

\section{Irreducibility results}

From now on we work on $M_{ort}^0(r,n)$. We want to find under which conditions this moduli space is indeed irreducible.

\subsection{Consequences of trivial splitting}

The assumption $E \in M_{ort}^0(r,n)$ yields several consequences.

\begin{prop}\label{n even}
If $E \in M_{ort}^0(r,n)$, then $n=c_2(E)$ is even.
\end{prop}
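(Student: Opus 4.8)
The plan is to show that the hypothesis of trivial splitting produces a \emph{non-degenerate skew-symmetric} bilinear form on the $n$-dimensional space $V=\HH^1(E(-1))$, which is impossible in odd dimension. The starting point is the orthogonal structure: by the theorem characterising orthogonal bundles, the multiplication map satisfies $f\in U\otimes\sk$. Hence, contracting $f$ with any $u\in U^*$ yields a skew-symmetric tensor $f(u)\in\sk$, that is, a skew-symmetric map $f(u)\colon V^*\to V$. Each nonzero $u$ (up to scalar) cuts out a line $\ell_u=\PP(\ker u)\subset\PP^2$, and the heart of the argument is to read off the splitting type of $E$ on $\ell_u$ from the rank of $f(u)$.

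For the key computation I would restrict to the line. Tensoring the structure sequence $0\to\OO_{\PP^2}(-1)\to\OO_{\PP^2}\to\OO_{\ell_u}\to 0$ with $E(-1)$ gives $0\to E(-2)\to E(-1)\to E_{|\ell_u}(-1)\to 0$, whose long exact cohomology sequence reads
\[
\HH^1(E(-2))\xrightarrow{\ \mu_u\ }\HH^1(E(-1))\to \HH^1(E_{|\ell_u}(-1))\to \HH^2(E(-2)).
\]
By the Beilinson table of Lemma \ref{teorema 7.2} we have $\HH^2(E(-2))=0$, together with the identifications $\HH^1(E(-2))=V^*$ and $\HH^1(E(-1))=V$. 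The connecting map $\mu_u\colon V^*\to V$ is multiplication by the section $u\in\HH^0(\OO_{\PP^2}(1))=U^*$, which, since $f$ is by definition the natural multiplication tensor in $U\otimes V\otimes V$, is exactly the contraction $f(u)$. We therefore obtain $\HH^1(E_{|\ell_u}(-1))=\coker\big(f(u)\big)$.

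To conclude, note that a rank $r$ bundle on $\ell_u\cong\PP^1$ with $c_1=0$ splits as $\bigoplus_i\OO_{\PP^1}(a_i)$ with $\sum_i a_i=0$; such a bundle is trivial precisely when $\HH^1(E_{|\ell_u}(-1))=0$, i.e. when all $a_i\ge 0$, which forces all $a_i=0$. By the displayed identification this happens if and only if the square matrix $f(u)$ is invertible, equivalently non-degenerate. Since $E\in M_{ort}^0(r,n)$, there exists a line $\ell_u$ with $E_{|\ell_u}=\OO_{\ell_u}^r$, and for that $u$ the form $f(u)\in\sk$ is a non-degenerate skew-symmetric form on an $n$-dimensional space. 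As a skew-symmetric matrix of odd size is always singular (its rank is even, hence at most $n-1$ when $n$ is odd), $n$ must be even.

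The only delicate point is the identification of the connecting map $\mu_u$ with the contraction $f(u)$, i.e. tracing through the monad that $f$ literally implements multiplication by sections of $\OO_{\PP^2}(1)$ on $\bigoplus_t\HH^1(E(t))$; once this is granted, the remaining steps are routine bookkeeping with the Beilinson table. As a bonus, the non-degenerate form $f(u)$ furnishes the symplectic form $J$ on $V$ used throughout the sequel.
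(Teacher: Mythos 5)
Your proof is correct and follows essentially the same route as the paper: both arguments identify the restriction-to-a-line map $\HH^1(E(-2))\to\HH^1(E(-1))$ with the skew-symmetric contraction $f(u)$, observe that trivial splitting on some line forces this $n\times n$ skew-symmetric matrix to be invertible, and conclude that $n$ must be even. The only cosmetic difference is that you detect non-triviality of the splitting via the cokernel $\HH^1(E_{|\ell_u}(-1))$, while the paper uses the kernel $\HH^0(E(-1)_{|\ell})$ and phrases the invertibility condition through the Pfaffian of $f(z)$.
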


\begin{proof}
We need to introduce the discriminant of a morphism from \cite[1.7.1]{Hul}. Consider again the map 
$f:\HH^1(E(-2)) \otimes \Omega_{\PP^2}^1(2) \rightarrow  \HH^1(E(-1)) \otimes \OO_{\PP^2}(1)$ 
from the defining monad (\ref{monade}). Recall that $f$ can be seen as an element of $V \otimes V \otimes U$, 
hence $f:U^* \otimes V^* \rightarrow V$ and for every $z \in U^*$ we can define a map:
$$f(z): V^* \rightarrow V,\:\:\:\:f(z):=f(- \otimes z).$$

Define the discriminant of $f(z)$ as  $\Delta(f):=\det(f(z))$. Then the following holds:
$$\{z \in U^* \:|\: \Delta(f)=0\} \simeq \{\ell \in {\PP^2}^*\:|\:E_{|_\ell} \neq \OO_{\PP^1}^r\}.$$
To see this, take a line with equation $\{z=0\}$ and tensor its hyperplane sequence by $E(-1)$:
$$0 \rightarrow E(-2) \xrightarrow{z} E(-1) \rightarrow E(-1)_{|_\ell} \rightarrow 0.$$
Taking cohomology, since $\hh^0(E(-2))=\hh^0(E(-1))=0$, we get:
\begin{equation}\label{restrizione retta generica}
    0 \rightarrow \HH^0(E(-1)_{|_\ell}) \rightarrow \HH^1(E(-2)) \xrightarrow{f(z)} \HH^1(E(-1)) \rightarrow \ldots
\end{equation}
Hence $\det(f(z))=0$ if and only if $\hh^0(E(-1)_{|_\ell}) \neq 0$, and this condition is equivalent to $E_{|_\ell} \neq \OO_{\PP^1}^r$. 
If $E$ is orthogonal, then $f(z)$ is skew-symmetric, and we can consider its Pfaffian instead of the determinant.  In order for it to be 
non-zero, one needs $c_2(E)=\hh^1(E(-2))=\hh^1(E(-1))$ even, and this concludes the proof.
\end{proof}

\begin{rem} The discriminant $\Delta(f):=\Pf(f(z)) \in \HH^0(\OO_{{\PP^2}^*}(\frac{n}{2}))$ is a homogeneous polynomial of degree $\frac{n}{2}$ 
(up to a scalar it is uniquely determined by the class $[f]$ in the $\SL(V)$ equivalence). 
Its zero set is a curve of degree $\frac{n}{2}$ in the plane and the proof of Proposition \ref{n even} shows how this curve is related to the splitting behavior of $E$. 
\end{rem}

\begin{rem}\label{mumford invariant} From a result by Mumford \cite[Page 184]{Mumford_theta} it follows that 
if $E$ is an orthogonal bundle on the projective line, then $\hh^0(E(-1))$ mod 2 is invariant under deformations. 
In \cite{hulek_orthogonal} the author proved that orthogonal rank 2 bundles on $\PP^1$ are rigid, while for higher rank the 
Mumford invariant is the only one. More precisely, two orthogonal bundles on $\PP^1$ can be deformed into each other if and only if 
they have the same Mumford invariant. In what follows, one could define $n$ mod 2 (that is, $\hh^1(E(-1))$ mod 2) 
to be the ``Mumford invariant'' for the case of $\PP^2$. (Notice that by Serre duality, on $\PP^1$ one has 
that $\hh^0(E(-1))=\hh^1(E(-1))$.) Proposition \ref{n even} tells us that the parity of $n$ 
is indeed connected with the splitting behavior of $E$ on the general line. 
\end{rem}

\begin{prop}\label{def Z}
If $E \in M_{ort}^0(r,n)$, then $\rk \HH^0(f)=2n +\rk Z$, where $Z:=PQ^{-1}R-RQ^{-1}P$ and $P$, $Q$ and $R$ are as in \eqref{matrix rep}. 
Moreover $\rk Z=r$.
\end{prop}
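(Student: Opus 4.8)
The plan is to prove the rank formula $\rk \HH^0(f) = 2n + \rk Z$ first; the equality $\rk Z = r$ then drops out immediately by comparing with Lemma~\ref{rk of H^0(f)}, which gives $\rk \HH^0(f) = 2n + r$. The starting point is to make sense of $Z = PQ^{-1}R - RQ^{-1}P$ by arranging that the slice $Q$ is invertible. Since $E \in M^0_{ort}(r,n)$, the discriminant computation in Proposition~\ref{n even} shows that $\Pf(f(z)) \neq 0$ for general $z \in U^*$, i.e. the general slice of $f$ is an invertible (skew-symmetric, $n$ even) matrix. After a suitable change of coordinates in $U$---which acts on the three slices $P,Q,R$ and preserves both the $\SL(V)$-class of $f$ and the rank of $\HH^0(f)$---I may therefore assume that $Q$ itself is invertible.

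With $Q$ invertible, I would compute $\rk \HH^0(f)$ by block Gaussian elimination on the $3n \times 3n$ matrix in \eqref{matrix rep}, using the invertible off-diagonal blocks $Q$ and $-Q$ sitting in positions $(1,3)$ and $(3,1)$ as pivots. First, block-row operations clear the $(2,3)$ and $(2,1)$ blocks: subtracting $RQ^{-1}$ times block-row $1$ and $PQ^{-1}$ times block-row $3$ from block-row $2$ turns the middle block-row into $(0,\, PQ^{-1}R - RQ^{-1}P,\, 0) = (0, Z, 0)$, the two products combining into the commutator-type expression precisely because of the signs forced by skew-symmetry. Dual block-column operations then clear the remaining entries of block-column $2$, leaving the anti-block-diagonal matrix
\[
\begin{bmatrix} 0 & 0 & Q \\ 0 & Z & 0 \\ -Q & 0 & 0 \end{bmatrix}.
\]
Since these operations preserve rank and $Q$ is invertible, I read off $\rk \HH^0(f) = \rk Q + \rk Z + \rk Q = 2n + \rk Z$, as desired.

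Combining this with Lemma~\ref{rk of H^0(f)} yields $\rk Z = r$ and completes the proof. I expect the only genuinely delicate points to be (i) the justification that one may assume $Q$ invertible, which is exactly where the trivial-splitting hypothesis defining $M^0_{ort}(r,n)$ enters---on all of $M_{ort}(r,n)$ no slice need be invertible---and (ii) keeping track of the signs in the block elimination so that the Schur complement emerges as the difference $PQ^{-1}R - RQ^{-1}P$ rather than a single product; here the skew-symmetry of $P$, $Q$, $R$ does the work, and as a sanity check one verifies that the resulting $Z$ is in fact symmetric, consistent with its later role as the symmetric matrix $AJB - BJA$.
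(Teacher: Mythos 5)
Your proof is correct and takes essentially the same approach as the paper's: both use the trivial-splitting hypothesis (via the nonvanishing Pfaffian of $f(z)$, as in Proposition \ref{n even}) to reduce to the case where the slice $Q$ is invertible, then perform block elimination on $\HH^0(f)$ using the invertible $Q$-blocks as pivots to exhibit the rank as $2n+\rk Z$, and finally compare with Lemma \ref{rk of H^0(f)} to get $\rk Z=r$. The only cosmetic difference is that the paper's congruence transformation leaves the $R$-blocks in place (reading off the rank from the resulting matrix), whereas you clear them completely to reach an anti-block-diagonal form.
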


\begin{proof}
If $E \in M_{ort}^0(r,n)$, then without loss of generality we can assume that any one of the three skew-symmetric slices $P$, $Q$ and $R$ of the map $f$ is invertible.

Just notice that in (\ref{restrizione retta generica}) the map $f(z)$ can be explicitly written as $z_0P+z_1Q+z_2R$. Now 
if the general line has trivial splitting type, taking coordinate lines the map still has to have nonzero Pfaffian, and we 
can assume that the slice $Q$ is invertible. Then we can compute the rank of $\HH^0(f)$ explicitly as:

\begin{align}
    \nonumber \rk \HH^0(f) &= \rk \left[
\begin{array}{ccc}
0&P&Q\\
P&0&R\\
Q&R&0
\end{array}
\right]\\
\nonumber &= \rk\left[
        \begin{array}{ccc}
        I&0&0\\
        0&I&-PQ^{-1}\\
        0&0&I
        \end{array}
        \right]    
        \left[
    \begin{array}{ccc}
    0&P&Q\\
    -P&0&R\\
    -Q&-R&0
    \end{array}
    \right]
    \left[
        \begin{array}{ccc}
        I&0&0\\
        0&I&0\\
        0&-Q^{-1}P&I
        \end{array}
        \right]\\
        \nonumber &= \rk
        \left[
            \begin{array}{ccc}
            0&0&Q\\
            0&Z&R\\
            -Q&-R&0
            \end{array}
            \right]
\end{align} 
   
where $Z=PQ^{-1}R-RQ^{-1}P$. Hence:
\begin{equation}\label{explicit rank}
    \rk \HH^0(f)=2 \rk Q + \rk Z= 2n + \rk Z
\end{equation}

Recall from Lemma \ref{rk of H^0(f)} that $\rk \HH^0(f)=2n +r$ so comparing with \eqref{explicit rank} we deduce that $\rk Z=r$, 
$r$ being the rank of the bundle $E$.
\end{proof}

\begin{rem} When we computed the rank of the map $\HH^0(f)$ in Lemma \ref{rk of H^0(f)}, we did not make any assumption on the matrix $Q$ 
(nor on the splitting type of the bundle). It is easy to show that this rank equals $2n+r$ even if $Q$ is not invertible, 
and there is no contradiction between Proposition \ref{def Z} and Lemma \ref{rk of H^0(f)}.
\end{rem}

\begin{rem} The assumption that the bundle $E(f)$ associated to a monad of type \eqref{monade} has trivial splitting on the general line 
	implies that $f$ corresponds to a semistable point in $\PP(U \otimes \sk)$ under the $\SL(U)\times \SL(V)$-action. 
\end{rem}

\subsection{Main Theorem}
We will now use formula (\ref{explicit rank}) to prove the irreducibility of the moduli space $M^0_{ort}(r,n)$. 

Recall that in our setting $n=2p$ is even. There is no loss in generality if we assume that the general invertible skew-symmetric matrix $Q$ is the standard symplectic form 
$J:=\SymplFormMtrx{p}$.
 
Then the matrix $Z$ from Proposition \ref{def Z} is $Z=PJR-RJP$, where again both $P$ and $R$ are skew-symmetric $n \times n$ matrices.

Let us now define:
\begin{equation}\label{def ceqrn}
	\ceqrn:=\{(A,B) \in \sk \times \sk\:|\: \rk (AJB-BJA)=r\}.
\end{equation}

\begin{lem}\label{key lemma}
Let $V$ be a complex vector space of even dimension $n$. The subvariety $\ceqrn$ is irreducible of codimension ${{n-r+1} \choose 2}$ in $\sk \times \sk$ for $r=n$ and $n \ge 4$, 
and for $r=n-1$ and $n \ge 8$.
\end{lem}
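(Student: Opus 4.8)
The plan is to analyze the map

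$$\mu:\sk\times\sk\to\sy,\qquad(A,B)\mapsto AJB-BJA,$$

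and show that $\ceqrn=\mu^{-1}(\{Z\in\sy\mid\rk Z=r\})$ is irreducible of the claimed codimension. The key structural observation is that $J$ has a square root in the following sense: after the change of variables $A\mapsto \tilde A:=AJ$, $B\mapsto\tilde B:=BJ$, a skew-symmetric matrix $A$ corresponds to a matrix $\tilde A$ that is \emph{skew-Hamiltonian} with respect to $J$ (i.e. $J\tilde A$ is skew-symmetric, equivalently $\tilde A^{t}J=J\tilde A$). Under this substitution $\mu(A,B)J=\tilde A\tilde B-\tilde B\tilde A=[\tilde A,\tilde B]$, so the rank condition becomes a condition on the \emph{commutator} of two skew-Hamiltonian matrices. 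I expect section 4 of the paper to develop exactly this dictionary, and I would rely on a structural description of $[\tilde A,\tilde B]$ for skew-Hamiltonian $\tilde A,\tilde B$ to understand which symmetric matrices $Z$ arise as $\mu(A,B)$ and with what fibers.

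The next step is a \textbf{fibration argument}. I would stratify the target by rank, writing $\syr$ for symmetric matrices of exactly rank $r$; this is a smooth irreducible locally closed subvariety of $\sy$ of known codimension $\binom{n-r+1}{2}$ (the codimension of rank $\le r$ symmetric matrices, i.e. the secant variety to the Veronese, as the excerpt hints). The idea is to understand $\mu$ as a map whose image meets $\syr$, and then to show that the restriction $\mu:\ceqrn\to\syr$ (or its closure) behaves like a fibration with irreducible fibers over an irreducible base. If the generic fiber is irreducible of constant dimension $d$, then $\ceqrn$ is irreducible and
$$\codim\ceqrn=\codim_{\sy}\syr=\binom{n-r+1}{2},$$
provided the fiber dimension is accounted for correctly by the source dimension $2\dim\sk=n(n-1)$. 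For $r=n$ the target stratum is the open dense locus of full-rank symmetric matrices, so here the problem is essentially to show $\mu$ is dominant with irreducible generic fiber, which should be the cleaner case and explains the weaker hypothesis $n\ge 4$.

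The \textbf{main obstacle} is the case $r=n-1$, and more generally controlling the \emph{singular locus} of the relevant fibers, which is where the excerpt signals the real difficulty: the estimate reduces to bounding the dimension of the singular locus of \lq\lq highly non-general'' hyperplane sections of the determinantal variety $\syrc$. The subtlety is that the linear system cut out by the image of $\mu$ on $\syr$ is \emph{not} generic — it is constrained by the skew-Hamiltonian structure — so one cannot simply invoke Bertini. Concretely, I would (i) use the commutator description to identify the tangent map of $\mu$ and compute generic fiber dimension; (ii) apply a connectedness theorem (I anticipate a Fulton--Hansen or deformation-to-the-normal-cone style argument, matching the \lq\lq strong connectedness result'' advertised in the introduction) to reduce irreducibility of $\ceqrn$ to a dimension estimate on a singular locus; and (iii) carry out that estimate by a direct, case-specific analysis of symmetric rank-$r$ matrices cut by the special hyperplanes, which is precisely the step that forces $n\ge 8$ when $r=n-1$. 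I expect steps (i) and (ii) to be formal once the skew-Hamiltonian commutator structure is in hand, and step (iii) to be the genuinely hard, computational heart of the lemma.
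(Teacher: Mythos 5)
Your plan has the right raw ingredients (the skew-Hamiltonian dictionary, a non-generic linear section of the determinantal variety, a connectedness theorem plus a singular-locus estimate), but it attaches them to a fibration that cannot be set up, and one of its concrete claims is false. You propose to fiber $\mu$ over the rank stratum $\syr$ in the \emph{target}; this presupposes control of the image $\mu(\ceqrn)$ and of the fibers $\mu^{-1}(Z)$, and the image of $\mu$ is precisely the delicate object here. Indeed (Proposition \ref{phi dominante} of the paper) $\mu$ is \emph{not} dominant for $n=4$ (its image is the cone over $\G(2,5)$, of dimension $6$ in a $10$-dimensional space) nor for $n=6$ (its image is a quartic hypersurface); it becomes dominant only for $n\ge 8$. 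So your reduction of the case $r=n$ to ``show $\mu$ is dominant'' fails in exactly the two smallest cases the lemma covers, $(r,n)=(4,4)$ and $(6,6)$. The correct argument for $r=n$ is trivial and does not mention the image at all: $\ceqrn$ is the locus where $\det(AJB-BJA)\neq 0$, an open subset of the irreducible variety $\sk\times\sk$, which is non-empty for $n\ge 4$; this gives irreducibility and codimension ${1\choose 2}=0$. Relatedly, the phrase ``the linear system cut out by the image of $\mu$ on $\syr$'' has no meaning: the image of $\mu$ is nowhere close to a linear section, and even granting a fibration over an irreducible base you would need \emph{all} fibers (not just generic ones) irreducible of constant dimension to conclude irreducibility of the total space.

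The paper's architecture is different in exactly the way needed to make your steps (i)--(iii) meaningful: it fibers over a \emph{source} coordinate, not the target. One first replaces $\ceqrn$ by $\crn$ (rank $\le r$), of which $\ceqrn$ is open. Linearity appears only after freezing $B$: when $JB$ is a regular (non-derogatory) skew-Hamiltonian, the map $\phib\colon A\mapsto AJB-BJA$ is linear with kernel of minimal dimension $\tfrac n2$ (Proposition \ref{dimension lemma}), so its image $\cnb$ is a concrete linear subspace of $\sy$ of codimension $\tfrac32 n$, cut by the ``diamond'' trace/antitrace conditions. The hard work then happens inside the fiber $\crnb=\cnb\cap\syrc$: pulling back to the incidence variety over $\G(n-r,n)$, one applies Bertini for proper morphisms plus the SGA2 connectedness theorem (not Fulton--Hansen), and reduces irreducibility to showing the non-transversality locus (the ``bad lines'') has codimension at least $2$; this estimate is what forces $n\ge 8$ when $r=n-1$. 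Finally, $\crn^0$ (the locus where $JB$ is regular) fibers over the irreducible $\W_{reg}$ with irreducible equidimensional fibers, and one closes the argument by showing $\crn=\overline{\crn^0}$: if $JB$ is not regular, pick a regular $JB'$ commuting with $JA$ and deform along $JB+tJB'$, which leaves the commutator unchanged. Without relocating your connectedness-plus-singular-locus step from the (uncontrollable) image of $\mu$ to the per-$B$ linear sections $\cnb\cap\syrc$, the proposal cannot be completed.
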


Lemma \ref{key lemma} is a key step in our argument, and we will devote next section to its proof. 
Assume it is true for now; then the following holds, which constitutes our main result:

\begin{thm}\label{teorema 7.7}
Let $n$ be an even integer. The moduli space $M^0_{ort}(r,n)$ of rank $r$ stable orthogonal vector bundles on $\PP^2$, with Chern classes $(c_1,c_2)=(0,n)$, 
and trivial splitting on the general line, is smooth irreducible of dimension $(r-2)n-{r\choose 2}$ for $r=n$ and $n \ge 4$, and $r=n-1$ and $n\ge 8$.
\end{thm}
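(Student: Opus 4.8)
The plan is to deduce Theorem \ref{teorema 7.7} from the geometric setup already developed, using Lemma \ref{key lemma} as a black box and reducing the statement about the moduli space to the irreducibility of the parameter space of monads. First I would recall that by Lemma \ref{lemma dim} the space $M^0_{ort}(r,n)$, being an open subscheme of $M_{ort}(r,n)$, is already known to be smooth of the asserted dimension $(r-2)n - {r \choose 2}$; so the entire content of the theorem is the \emph{irreducibility}. By Proposition \ref{n even} the hypothesis of trivial splitting forces $n$ to be even, which is exactly the standing assumption $n=2p$ under which Lemma \ref{key lemma} is formulated, so the two regimes $(r=n,\ n\ge 4)$ and $(r=n-1,\ n\ge 8)$ match up correctly.

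The heart of the argument is to translate irreducibility of the moduli space into irreducibility of the space of maps $f$. By Lemma \ref{teorema 7.2} every $E\in M^0_{ort}(r,n)$ arises as the cohomology of the monad \eqref{monade} with $f \in U\otimes \sk$, and two such bundles are isomorphic precisely when the corresponding $f$ are $\SL(V)$-equivalent (and, accounting for the $U$-factor, $\SL(U)$-equivalent). I would therefore set up the parameter space of admissible maps: those $f\in U\otimes\sk$ for which the associated bundle is stable with trivial splitting. Using the coordinate decomposition $f=(P,Q,R)$ of \eqref{matrix rep} and Proposition \ref{def Z}, the trivial-splitting condition is encoded by the requirement $\rk Z = r$ with $Z=PJR-RJP$ after normalizing the invertible slice $Q$ to the standard form $J$. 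Thus, up to the action of the groups, the fiber of the parameter space over the normalized slice is governed exactly by the set $\{(A,B)\in\sk\times\sk : \rk(AJB-BJA)=r\}=\ceqrn$. The plan is to exhibit $M^0_{ort}(r,n)$ as the image of a space fibered over $\ceqrn$ under a sequence of surjective morphisms with irreducible fibers: one accounts for the choice of the invertible slice $Q$ (an irreducible open subset of $\sk$, namely the Pfaffian-nonzero locus), and one quotients by the reductive group $\SL(U)\times\SL(V)$, whose orbits are irreducible.

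The key formal step is then the standard fibration principle: if a morphism $\pi\colon X\to Y$ is surjective with $Y$ irreducible and all fibers irreducible of constant dimension, then $X$ is irreducible, and likewise the image of an irreducible variety under a morphism is irreducible. Applying this to the tower of maps described above, the irreducibility of $\ceqrn$ supplied by Lemma \ref{key lemma} propagates up through the choice of $Q$ and down through the group quotient, yielding that the full parameter space of admissible $f$ is irreducible and hence that its quotient $M^0_{ort}(r,n)$ is irreducible. The codimension statement ${n-r+1 \choose 2}$ in Lemma \ref{key lemma} is what guarantees the fibers have the expected constant dimension, so that the dimension count is consistent with the value $(r-2)n-{r\choose 2}$ coming from Lemma \ref{lemma dim}; I would verify this numerical compatibility as a sanity check.

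I expect the main obstacle to be \emph{not} the fibration bookkeeping but rather ensuring that the normalization step is carried out cleanly: the passage from arbitrary $f$ to a representative with $Q=J$ uses that \emph{some} skew-symmetric slice is invertible (guaranteed by trivial splitting via the nonvanishing Pfaffian argument in Proposition \ref{def Z}), but one must check that the locus where this normalization is possible is an irreducible open subset and that the residual stabilizer acting on $(P,R)=(A,B)$ is connected, so that quotienting by it preserves irreducibility. Equivalently, the delicate point is confirming that the $\SL(V)$-action restricted to the $Q=J$ slice is through a connected subgroup (the symplectic group $\Sp(n,\C)$ stabilizing $J$), whose orbits are then irreducible; once this is in place, the remaining steps are routine applications of the fibration lemma, and the whole theorem follows formally from Lemma \ref{key lemma}.
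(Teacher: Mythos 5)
Your proposal is correct and follows essentially the same route as the paper: reduce to the quasi-affine parameter space of monad maps $f\in U\otimes\sk$ surjecting onto $M^0_{ort}(r,n)$, use the trivial-splitting/nonzero-Pfaffian condition to fiber this space over the invertible slice $Q$ with fibers identified (after normalizing $Q$ to $J$) with $\ceqrn$, and then invoke Lemma \ref{key lemma} together with the standard fibration principle and a dimension check against Lemma \ref{lemma dim}. The delicate points you flag are exactly the ones the paper handles, namely by covering $\pi^{-1}(M^0_{ort}(r,n))$ with the pairwise-intersecting open sets $\tilde{K}_{r,n,x}$, $x\in\PP^2$, and using the $\SL(U)$-action to reduce to a single $\tilde{K}_{r,n,\overline{x}}$ fibered over $\sk$.
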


\begin{proof}    
Following \cite[Theorem 1.5.2]{Hul} and \cite[Theorem 7.7]{simpl}, we start by giving a necessary and sufficient condition for an element 
$f \in \PP(U \otimes \sk)$ to give a bundle $E(f)$. Define:
$$K_{r,n}:=\{f \in \PP(U \otimes \sk)\:|\:\rk(\HH^0(f))=2n+r\}.$$

$K_{r,n}$ is quasi-affine, and any $f \in K_{r,n}$ defines $E(f)$ as cohomology bundle of the corresponding monad, once we impose the extra condition that the morphism $V^* \otimes \Omega^1(2) \xrightarrow{f} V \otimes \OO(1)$ is surjective. 

Such maps $f$ form an open subvariety $\tilde{K}_{r,n} \subseteq K_{r,n}$. There is a universal bundle $\mathcal{E}$ over $\PP^2 \times \tilde{K}_{r,n}$ such that 
the fiber $\mathcal{E}_{\PP^2 \times \{ f \}}$ is exactly the bundle $E(f)$, see \cite[Proposition 1.6.1]{Hul}.
Moreover we have an open subvariety $\tilde{K}_{r,n}^s \subseteq \tilde{K}_{r,n}$ consisting of those $f$ giving rise to a stable $E(f)$.
By the universal property of the moduli space, we have a surjection:
$$\xymatrix{\tilde{K}_{r,n}^s \ar@{>>}[r] &M_{ort}(r,n),}$$
and in particular a surjection:
$$\xymatrix{\tilde{K}_{r,n}^s \ar@{>>}[r]^-\pi &M^0_{ort}(r,n).}$$
To prove the theorem it is enough to show that $\pi^{-1}(M^0_{ort}(r,n))$ is irreducible. One has that $\pi^{-1}(M^0_{ort}(r,n))=\tilde{K}_{r,n}^s \setminus Z(\Delta)$, 
and:
$$\mbox{$\tilde{K}_{r,n}^s \setminus Z(\Delta) = \bigcup_{x \in \PP^2} \{ f \in \tilde{K}_{r,n}\:|\:\Delta(f)(x) \neq 0\}=\bigcup_{x \in \PP^2}\tilde{K}_{r,n,x}$}$$
Since any two $\tilde{K}_{r,n,x}$ and $\tilde{K}_{r,n,y}$ have non-empty intersection, we can take advantage of the $\SL(U)$-action: it is enough to prove that 
$\tilde{K}_{r,n,\overline{x}}$ is irreducible for $\overline{x}=(0,1,0)$.
Finally, notice that we have a fibration:
\begin{equation}\label{fibration}
	\tilde{K}_{r,n,\overline{x}} \rightarrow \sk
\end{equation}
sending $f$ to the invertible slice $Q$ of the matrix representation (\ref{matrix rep}),
which is $\SL(V)$ invariant with fibers isomorphic to $\ceqrn$. Irreducibility then follows from the key Lemma \ref{key lemma}.

To conclude the proof, we use the fibration \eqref{fibration} to compute that:
\begin{align}
\nonumber \dim M_{ort}^0(r,n) &= \dim \sk + \dim \ceqrn-\dim\GL(V)\\
\nonumber &=\mbox{${n \choose 2} + {\Big [} 2{n \choose 2} -{{n-r+1} \choose 2} {\Big ]}-n^2 =(r-2)n-{r\choose 2}$},
\end{align}
which agrees with the estimate that we made in Lemma \ref{lemma dim}.
\end{proof}

\section{The key lemma}\label{section key lemma}

The aim of this section is proving the key Lemma \ref{key lemma}. The reasoning is somewhat 
similar to what is done in the unstructured case treated in \cite{Hul}. There one reduces to prove the irreducibility of pairs of 
$n \times n$ matrices $(A,B)$ whose commutator $[A,B]$ has constant rank $r$. 
This result also has a symmetric analogue proved in \cite{basili,brennan_pinto_vasconcelos}, which is used in \cite{simpl} to show 
irreducibility in the symplectic case. 

The technical difficulty of the skew-symmetric case is however considerably higher than the 
other two cases. In particular the proof of Lemma \ref{C_b irred} requires the use of non-trivial connectedness results.

\vskip.05in

Here are the steps leading to the proof of Lemma \ref{key lemma}.

\begin{enumerate}
	\item We work on the variety: 
	\begin{equation}\label{def crn}
		\crn:=\{(A,B) \in \sk \times \sk\:|\: \rk (AJB-BJA) \le r\},
	\end{equation}
	and prove its irreducibility. The irreducibility of $\ceqrn$ then follows from the fact that $\ceqrn$ is a Zariski open subset of $\crn$.
	
	\item We give a definition of \emph{regular matrix} that works in our setting. We first remark that $\rk(AJB-BJA)=\rk[JA,JB]$, with $[-,-]$ the usual commutator of matrices. 
	This leads us to introduce in the picture skew-Hamiltonian matrices, i.e. matrices of the form $JB$, $B$ skew-symmetric. This is done in Definition \ref{regular}. 
	We then prove in Proposition \ref{dimension lemma} that for a regular matrix $JB$ 
	the kernel of the homomorphism $\phib:\sk \to \sy$ sending a skew-symmetric $A \mapsto AJB-BJA$ has the smallest possible dimension, namely $\frac{n}{2}$.
	
	\item We notice that, for any pair of skew-symmetric matrices $(A,B)$, $AJB-BJA  \in \sy$ is a symmetric matrix.
	
	Fixing a skew-symmetric matrix $B$, we define: 
	$$\crnb:=\{S \in \sy\:|\:S=AJB-BJA \:\:\hbox{for some}\:A \in \sk,\: \rk S \le r\},$$
	and in Lemma \ref{C_b irred} we show that if $JB$ is regular, then $\crnb$ is irreducible of dimension $nr-\frac{3}{2}n-{r \choose 2}$ for $r=n \ge 4$ and 
	for $r=n-1$ and $n \ge 8$.
	
    \item Lemma \ref{open irred} is the second to last step. We  define: 
    \begin{equation}
	\crn^0:=\{(A,B) \in \sk \times \sk\:|\: \:JB\:\hbox{is regular},\:\:\rk (AJB-BJA) \le r\},
\end{equation}
    and we use a fibration argument to deduce the irreducibility of $\crn^0$ from the irreducibility of $\crnb$. 

    \item The last step consists in showing that $\crn$ is the closure of $\crn^0$. This concludes the proof of Lemma \ref{key lemma}, as well as Section \ref{section key lemma}.
\end{enumerate}

\subsection{Regular skew-Hamiltonians}

For any pair of (skew-symmetric) matrices $(A,B)$ we make the trivial observation that:
$$\rk (AJB-BJA)=\rk [JA,JB],$$ 
where $[-,-]$ is the usual commutator of matrices. Hence studying symmetric matrices of the form $AJB-BJA$ and fixed (or bounded) rank 
is equivalent to studying the commutator of matrices of the form $JB$, where $B \in \sk$. 
Such matrices are called \emph{skew-Hamiltonian} (or \emph{anti-Hamiltonian}) in literature. Let us recall the following:

\begin{def-prop} Let $W$ and $H$ be elements of $V \otimes V$.
\begin{itemize}
	\item $W$ is called \emph{skew-Hamiltonian} if $JW=\tr{W}J$. $W$ is skew-Hamiltonian if and only $W=JB$, with $B \in \sk$ skew-symmetric. 
	We indicate the space of skew-Hamiltonian matrices by $\W$.
	\item $H$ is called \emph{Hamiltonian} if $JH=-\tr{H}J$. $H$ is Hamiltonian if and only $H=JS$, with $S \in \sy$ skew-symmetric. 
	We indicate the space of skew-Hamiltonian matrices by $\HHH$.
\end{itemize}
$\W$ and $\HHH$ correspond, respectively, to the Jordan algebra and to the Lie algebra of the symplectic group $\Sp(n)$.
\end{def-prop}

We mentioned above that Lemma \ref{key lemma} has an unstructured as well as a symmetric analogue, 
proven respectively in \cite[Proposition 2.3.6]{Hul} and in \cite[Theorem 2.6]{basili} and \cite[Corollary 3.6]{brennan_pinto_vasconcelos}. 
Both arguments make use of regular matrices: 
a \emph{regular matrix} is a regular element of the Lie algebra, and in particular it is an element whose commutator has minimal dimension. 

The proof in the symplectic case is particularly easy: given a symmetric matrix $B$, one defines the linear morphism 
$\varphi^B: \sy \to \sk$, mapping any $A$ to the commutator $[A,B]$. If $B$ is regular the kernel of the morphism is $n$-dimensional, which means 
that $\varphi^B$ is surjective. A standard fibration argument then allows one to conclude irreducibility of the space of 
symmetric matrices whose commutator has fixed rank.

Unfortunately the notion of regular element is meaningless for the Jordan algebra $\W$; therefore we give 
an ``ad hoc'' definition of regularity for skew-Hamiltonian matrices and justify our choice by proving that the dimension of the 
commutator of regular matrices is indeed minimal. In theory of structured matrices our definition corresponds to that of \emph{non-derogatory} matrices, but 
we prefer to adopt the terminology ``regular'' for consistency with the unstructured and symplectic cases.

\begin{rem}\label{actions}
    The symplectic group $\Sp(n)$ acts on skew-Hamiltonians $\W$ by conjugation. For $M \in \Sp(n)$ and $W \in \W$ one defines:
    \begin{equation}\label{action1}
        M\ast W:= M^{-1} W M,
    \end{equation}
    and this action preserves sums and products.

    By definition, $W \in \W$ if and only if $\tr{W}J=JW$. If $M \in \Sp(n)$ then $\tr{M}J=JM^{-1}$ and $JM=\tr{M^{-1}}J$, 
    and thus if $W$ is skew-Hamiltonian, so is $M^{-1} W M$, because:
\begin{align}
	\nonumber \tr{(M^{-1} W M)}J=\tr{M} &\tr{W} (\tr{M^{-1}} J)= \tr{M} (\tr{W} J) M\\
	\nonumber &= (\tr{M} J) W M=J(M^{-1} W M).
\end{align}
    The fact that this action preserves sums and products is an immediate check.
    
    The symplectic group $\Sp(n)$ also acts on skew-symmetric matrices $\sk$ by congruence. For $M \in \Sp(n)$ and $B \in \sk$:
    \begin{equation}\label{action2}
        M\star B:= \tr{M} B M,
    \end{equation}
    and this action preserves sums and products.
\end{rem} 

\begin{lem}\label{consistent}
	The two actions \eqref{action1} and \eqref{action2} are consistent with each other. 
\end{lem}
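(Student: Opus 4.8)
The plan is to first pin down what ``consistent'' should mean. The two actions live on different spaces---$\ast$ on the skew-Hamiltonians $\W$, and $\star$ on the skew-symmetric matrices $\sk$---but these spaces are identified by the bijection underlying the preceding Definition-Proposition, namely $\W \to \sk$, $W = JB \mapsto B$. I read consistency as the assertion that this identification is $\Sp(n)$-equivariant; concretely, that for every $M \in \Sp(n)$ and every $B \in \sk$,
$$M \ast (JB) = J\,(M \star B).$$
So the first move is to record this equality as the precise content of the lemma, after which the statement reduces to a one-line matrix verification.

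Next I would unwind both sides using the definitions \eqref{action1} and \eqref{action2}: the left-hand side is $M^{-1}(JB)M$ and the right-hand side is $J\,\tr{M}BM$. Cancelling the invertible factor $M$ on the right, the claim reduces to the single identity $M^{-1}J = J\,\tr{M}$, which no longer involves $B$. The key step is then to deduce this from the symplectic relation: starting from $\tr{M}JM = J$ (equivalently from $\tr{M}J = JM^{-1}$, already noted in Remark \ref{actions}), I invert both sides and use $J^{-1} = -J$ to obtain $M^{-1}J(\tr{M})^{-1} = J$, hence $M^{-1}J = J\,\tr{M}$, as wanted.

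I do not expect a genuine obstacle here; the only care needed is bookkeeping with the symplectic identities $\tr{M}JM = J$, $JM = \tr{M^{-1}}J$ and $J^2 = -I$, all of which are available. What I would emphasise, since it is what the later sections actually use, is the consequence that both actions preserve the relevant rank: combining the equivariance above (on $\W$, where $\ast$ respects products and hence commutators) with the dual relation $MJ\,\tr{M} = J$ on the $\sk$-side, one gets
$$(M \star A)\,J\,(M \star B) - (M \star B)\,J\,(M \star A) = \tr{M}\,(AJB - BJA)\,M,$$
so that the map $(A,B) \mapsto AJB - BJA$ from $\sk \times \sk$ to $\sy$ is equivariant and $\rk(AJB-BJA)=\rk[JA,JB]$ is preserved. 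This is precisely the compatibility that will let us transport regularity statements between the conjugation picture for skew-Hamiltonians and the congruence picture for skew-symmetric matrices.
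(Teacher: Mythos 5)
Your proposal is correct and takes essentially the same route as the paper: both interpret consistency as the identity $M \ast (JB) = J(M \star B)$ and verify it by the one-line computation $M^{-1}(JB)M = J(\tr{M}BM)$, using the symplectic relation to move $J$ past $M$ (you isolate this as $M^{-1}J = J\tr{M}$, which the paper uses implicitly via the identities in Remark \ref{actions}). The extra observation about equivariance of $(A,B)\mapsto AJB-BJA$ is a correct bonus but not part of the lemma's proof.
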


\begin{proof} Write a skew-Hamiltonian $W$ as $JB$, with $B$ skew-symmetric. Then, given $M$ symplectic, its inverse $M^{-1}$ is symplectic as well, 
    and we have that:
    \begin{equation}
	M \ast (JB) = M^{-1} \:(JB)\: M = J\:(\tr{M} B M)=J(M\star B).
	\qedhere
\end{equation}
\end{proof}

\begin{lem}\cite[Theorem 3]{waterhouse}\label{up to sp}
	Let $W \in \W$ be a skew-Hamiltonian matrix of even size $n$. Up to symplectic conjugation $W$ is of the form 
    $\begin{bmatrix}
              P & 0\\
              0 & \tr{P}
    \end{bmatrix}$ for some $\frac{n}{2} \times \frac{n}{2}$ matrix $P \in Mat(\frac{n}{2},\C)$.
\end{lem}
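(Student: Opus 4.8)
The plan is to reinterpret the skew-Hamiltonian condition geometrically and then run a structure-theory argument on the corresponding self-adjoint operator. Equip $V$ with the symplectic form $\omega(x,y):=\tr{x}Jy$. Writing out $JW=\tr{W}J$ shows that $W\in\W$ if and only if $W$ is \emph{self-adjoint} for $\omega$, i.e.\ $\omega(Wx,y)=\omega(x,Wy)$ for all $x,y\in V$; the target matrix $\begin{bmatrix}P&0\\0&\tr{P}\end{bmatrix}$ is readily checked to be self-adjoint for the standard $J$, so the two sides are consistent. Since symplectic conjugation $W\mapsto M^{-1}WM$ (Remark \ref{actions}) is precisely a change of $\omega$-symplectic basis, the statement becomes: every $\omega$-self-adjoint operator admits a symplectic basis in which it is block-diagonal with blocks $P$ and $\tr{P}$.

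First I would split off the eigenvalues. Decompose $V=\bigoplus_\lambda V_\lambda$ into generalized eigenspaces of $W$. Self-adjointness gives $\omega(V_\lambda,V_\mu)=0$ for $\lambda\neq\mu$ (push a large power of $W-\lambda$ onto $V_\lambda$, where it vanishes, using invertibility of $W-\lambda$ on $V_\mu$), so each $V_\lambda$ is an $\omega$-nondegenerate, $W$-invariant subspace and it suffices to treat one $V_\lambda$ at a time. On $V_\lambda$ write $W=\lambda I+N$ with $N$ nilpotent and again $\omega$-self-adjoint; a normal form $N=\begin{bmatrix}P_\lambda&0\\0&\tr{P_\lambda}\end{bmatrix}$ upgrades to $W=\begin{bmatrix}\lambda I+P_\lambda&0\\0&\tr{(\lambda I+P_\lambda)}\end{bmatrix}$, so I am reduced to the nilpotent case.

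Now the heart of the argument. The key observation is that a single Jordan chain of $N$ is $\omega$-isotropic: for $c_m:=\omega(v,N^m v)$, self-adjointness lets one slide all the $N$'s across to get $c_m=\omega(N^m v,v)=-c_m$, whence $c_m=0$ (here $\operatorname{char}\C\neq2$ is used), so $\omega(N^iv,N^jv)=c_{i+j}=0$. Hence no single chain can carry a nondegenerate form, and chains must pair up. Concretely, pick $v$ of maximal height $k$; nondegeneracy of $\omega$ produces $w$ with $\omega(N^{k-1}v,w)\neq0$, and self-adjointness ($\omega(N^{k-1}v,w)=\omega(v,N^{k-1}w)$) forces $w$ to have height exactly $k$ as well. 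The cross-pairings $\omega(N^iv,N^jw)=\omega(v,N^{i+j}w)$ form a Hankel matrix whose anti-diagonal is nonzero and whose entries above it vanish, so it is invertible (and the two chains are independent); after subtracting lower chain-terms from $w$ (a routine Hankel normalization preserving its chain) the two chains become dual isotropics. Ordering $(N^{k-1}v,\dots,v\,;\,w,\dots,N^{k-1}w)$ then realizes $\omega$ as the standard $2k\times2k$ symplectic form and $N$ as $\begin{bmatrix}\Gamma&0\\0&\tr{\Gamma}\end{bmatrix}$ with $\Gamma$ the $k\times k$ nilpotent Jordan block. Since $U$, the span of these $2k$ vectors, is $\omega$-nondegenerate and $N$-invariant, its $\omega$-orthogonal complement is again $N$-invariant, nondegenerate and nilpotent, and I would finish by induction on $\dim V$.

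Finally I would assemble and reorder. Induction writes $V$ as an $\omega$-orthogonal sum of standard symplectic blocks, each carrying a first (``$e$'') and second (``$f$'') Lagrangian half. Collecting all the $e$-vectors first and all the $f$-vectors second gives a single symplectic basis in which $\omega=J$, while $N$ sends the $e$-span to itself by $P:=\bigoplus_t\Gamma_{k_t}$ and the $f$-span to itself by $\tr{P}$, with no mixing, yielding the desired global form $\begin{bmatrix}P&0\\0&\tr{P}\end{bmatrix}$. I expect the main obstacle to be the nilpotent step — specifically the construction and normalization of the partner chain $w$, the verification that the Hankel pairing is invertible and can be put in standard form, and the bookkeeping needed to reorder the induced blocks into the two clean halves; the eigenvalue splitting and the self-adjoint reformulation are routine by comparison.
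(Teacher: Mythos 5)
Your proof is correct, but it cannot be compared against an argument in the paper, because the paper gives none: Lemma \ref{up to sp} is imported wholesale as a citation to \cite[Theorem 3]{waterhouse}, so what you have produced is a self-contained substitute for that reference. Each step checks out: the reformulation of $JW=\tr{W}J$ as $\omega$-self-adjointness is the right starting point; the generalized-eigenspace splitting with $\omega(V_\lambda,V_\mu)=0$ for $\lambda\neq\mu$ (pushing $(W-\lambda)^N$ across the form) correctly reduces to the nilpotent case; the isotropy of a single Jordan chain via $c_m=-c_m$ is sound over $\C$; the partner chain $w$ exists and has full height by $\omega(N^{k-1}v,w)=\omega(v,N^{k-1}w)$; the anti-triangular Hankel matrix with nonzero anti-diagonal is indeed invertible, and the triangular system defining $w'=\sum_j a_jN^jw$ (with $a_0=1/h_{k-1}\neq 0$) does normalize the pairing to $\omega(N^iv,N^jw')=\delta_{i+j,k-1}$, giving the standard symplectic block with $N$ acting as $\Gamma\oplus\tr{\Gamma}$; and the passage to $U^{\perp}$ plus the final regrouping of all $e$-vectors and all $f$-vectors are handled correctly. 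The difference worth noting against the actual source: Waterhouse proves the statement over an arbitrary field, phrasing the argument module-theoretically (an alternating form on a $k[T]$-module, with cyclic submodules playing the role of your Jordan chains), which requires care with irreducible factors that are not linear and with alternating-versus-skew in characteristic $2$; your argument trades that generality for a shorter, purely linear-algebraic proof over $\C$ using eigenvalues, which is all this paper ever uses. In short, a correct and complete proof, genuinely more self-contained than the paper's treatment, though less general than the cited theorem.
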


\begin{defn}\label{regular}
    Let $W \in \W$ be a skew-Hamiltonian matrix. We call $W$ \emph{regular} if the minimal polynomial of $P$ in Lemma \ref{up to sp} has degree $\frac{n}{2}$. 
We denote the set of regular skew-Hamiltonians by $\W_{reg}$
\end{defn}

If $W$ regular then $P$ is a regular element of the Lie algebra ${\mathfrak{gl}}_{\frac{n}{2}}$. 
In particular for each of its distinct eigenvalues there is, in its Jordan normal form, only one corresponding Jordan block. 
This is equivalent to asking for the minimal polynomial of $P$ to coincide (up to sign) with the characteristic polynomial.

\begin{prop}\label{dimension lemma}
    Let $JB \in \W_{reg}$ be a regular skew-Hamiltonian. Then:
    \begin{equation}\label{dimension formula}
	\mbox{$\big{\{} JA \in \W \:|\: [JA,JB]=0\big{\}} =\langle (JB)^k\:|\:k=0,\ldots,\Big{(}\frac{n}{2}-1\Big{)} \rangle.$}
    \end{equation}
    In particular, the centralizer of $JB$ has minimal dimension $\frac{n}{2}$.
\end{prop}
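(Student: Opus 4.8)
The plan is to reduce to the symplectic normal form of Lemma \ref{up to sp} and then compute the centralizer by a direct block calculation, the only delicate point being the off-diagonal blocks.

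First I would note that both sides of \eqref{dimension formula} are invariant under the conjugation action $\ast$ of Remark \ref{actions}. Indeed, by Lemma \ref{consistent} this action sends $\W$ to itself and, preserving products, satisfies $(M\ast W)^k = M \ast (W^k)$ and carries the centralizer of $JB$ to that of $M\ast(JB)$. Hence, by Lemma \ref{up to sp} and Definition \ref{regular}, I may assume
$$ W := JB = \begin{bmatrix} P & 0 \\ 0 & \tr{P} \end{bmatrix}, $$
where $P$ is a regular (non-derogatory) $\tfrac{n}{2}\times\tfrac{n}{2}$ matrix, i.e. its minimal polynomial has degree $\tfrac{n}{2}$. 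Writing a general skew-Hamiltonian $W'$ in $\tfrac n2 \times \tfrac n2$ blocks, the defining relation $JW' = \tr{W'}J$ forces
$$ W' = \begin{bmatrix} A & C \\ D & \tr{A} \end{bmatrix}, \qquad \tr{C}=-C,\quad \tr{D}=-D, $$
with $A$ arbitrary. Expanding $[W',W]=0$ blockwise then splits into the three conditions $AP=PA$, $\ PC = C\,\tr{P}$, and $DP = \tr{P}\,D$ (the remaining block equation is the transpose of the first).

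The diagonal block is immediate: since $P$ is regular, the classical description of the centralizer of a non-derogatory matrix gives $A \in \C[P]$, a space of dimension $\tfrac n2$. The heart of the argument, which I expect to be the main obstacle, is showing $C=D=0$. Here I would invoke the standard fact that every square matrix is conjugate to its transpose by a symmetric invertible matrix, so I fix symmetric invertible $S$ with $\tr{P}\,S = S\,P$, equivalently $\tr{P} = S\,P\,S^{-1}$ and $q(\tr{P})\,S = S\,q(P)$ for every polynomial $q$. Substituting into $DP = \tr{P}\,D$ shows that $S^{-1}D$ commutes with $P$, so $D = S\,q(P)$ for some $q$; but then
$$ \tr{D} = \tr{q(P)}\,\tr{S} = q(\tr{P})\,S = S\,q(P) = D, $$
so $D$ is symmetric, and being also skew-symmetric it must vanish. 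An entirely parallel computation applied to $PC = C\,\tr{P}$ (intertwining on the other side, giving $C = q(P)\,S^{-1}$) forces $C=0$.

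Consequently the centralizing matrices are exactly
$$ W' = \begin{bmatrix} q(P) & 0 \\ 0 & q(\tr{P}) \end{bmatrix} = q(W), \qquad q \in \C[t], $$
so the centralizer of $JB$ inside $\W$ equals the polynomial algebra $\C[W]$. Since $P$ is regular, $I,P,\dots,P^{n/2-1}$ are linearly independent, hence so are $I, W, \dots, W^{n/2-1}$, which therefore form a basis of $\C[W]$. This proves \eqref{dimension formula} and shows the centralizer has dimension $\tfrac n2$; this value is the minimal one attainable, which is exactly what justifies the terminology of Definition \ref{regular}.
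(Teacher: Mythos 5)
Your proof is correct, and its skeleton is the same as the paper's: reduce to the block normal form of Lemma \ref{up to sp} via the conjugation action of Remark \ref{actions}, then compute the centralizer blockwise. The difference worth recording is your treatment of the off-diagonal blocks, which is more careful than the paper's at exactly the point you flagged as the main obstacle. The paper asserts that the commutation equations $C_2\tr{P}-PC_2=0$ and $C_3P-\tr{P}C_3=0$ by themselves force $C_2=C_3=0$, and invokes skew-Hamiltonicity only afterwards, merely to identify the two diagonal blocks. Taken literally this is false: $P$ and $\tr{P}$ are similar, so for regular $P$ each of the intertwiner spaces $\{X \,:\, PX=X\tr{P}\}$ and $\{X \,:\, XP=\tr{P}X\}$ has dimension $\frac{n}{2}$, and in particular contains invertible elements. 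What rescues the conclusion is precisely what you prove: every intertwiner is $q(P)$ times a fixed symmetric invertible intertwiner, hence is itself symmetric, so it is the skew-symmetry of the blocks $C$ and $D$ of a skew-Hamiltonian matrix that forces them to vanish. Your argument thus supplies the step the paper elides (the final count $\frac{n}{2}$ is of course unchanged). The remaining difference is cosmetic: you identify the centralizer directly as $\C[W]=\langle I,W,\ldots,W^{n/2-1}\rangle$, whereas the paper notes the inclusion $\supseteq$ and concludes by comparing dimensions; both yield \eqref{dimension formula}.
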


\begin{proof}
The inclusion $\supseteq$ is immediate. Equality follows for dimensional reasons. Indeed, by Lemma \ref{up to sp} there is a symplectic matrix $M \in \Sp(n)$ such that 
    $M^{-1}(JB)M=\begin{bmatrix}
              P & 0\\
              0 & \tr{P}
    \end{bmatrix}$ for some regular $P$. 
Let us look at all matrices $C=\left[ \begin{array}{cc}
    C_1&C_2\\
    C_3&C_4
    \end{array}
    \right]$ that commute with $JB$. Imposing that:
$$[C,JB]=\left[ \begin{array}{cc}
        [C_1,P]&C_2 \tr{P} -P C_2\\
        C_3 P-\tr{P} C_3&[C_4,\tr{P}]
        \end{array}
        \right]=0$$    
means that $C_2=C_3=0$, while, since $P$ is regular, $C_1$ and $C_4$ are parametrized by $\frac{n}{2}$ degrees of freedom each. 
Imposing to $C$ the extra condition of being skew-Hamiltonian means imposing to 
$JC=\left[ \begin{array}{cc}
0&C_4\\
-C_1&0
\end{array}
\right]$ to be skew-symmetric, and this halves the degrees of freedom to $\frac{n}{2}$. Hence the dimension of the left hand side in \eqref{dimension formula} equals the 
dimension of the right hand side, and they are both equal to $\frac{n}{2}$.
\end{proof}

It is worth recalling part of \cite[Proposition 2.2]{basili}, where it is proved that $(\sk)_{reg}=\sk \cap \{\hbox{regular matrices}\}$ 
is the open subset of all elements having centralizer of minimal dimension $\frac{n}{2}$.

\subsection{Irreducibility of $\crnb$ and diamond matrices}

For any skew-symmetric matrix $B \in \sk$, consider the vector space homomorphism:
\begin{equation}\label{phib}
    \xymatrix@R-4ex{\sk \ar[r]^-{\phib}& \sy\\
              A \ar@{|->}[r]&AJB-BJA,}
\end{equation}

and define:

\begin{equation}\label{def cnb}
	\cnb:=\im \phib=\{S \in \sy\:|\:S=AJB-BJA \:\:\hbox{for some}\:A \in \sk\}.
\end{equation}

If $JB$ is regular, since $AJB-BJA=-J[JA,JB]$, then by Lemma \ref{dimension lemma} the defect of the map $\phib$ 
is $\frac{n}{2}$, and $\cnb$ is a linear subspace of symmetric matrices $\sy$ of codimension:
    $$\mbox{${n+1 \choose 2} - \Big{[}{n \choose 2}-\frac{n}{2}\Big{]}=\frac{3}{2}n.$}$$

We can give explicit equations for $\cnb$. We need the following definition, from \cite{note_vanni}:

\begin{defn}\label{superantisub}
Let $M=(m_{ij})$ be a $d \times d$ square matrix, and let $k \in \Z$, $-d < k < d$.
\begin{enumerate}
\item The \emph{$k$-th trace} of $M$ is the sum $\sum_{i=1}^{d-k}m_{i,i+k}$ if $k \ge 0$, or $\sum_{j=1}^{d+k}m_{j-k,j}$ if $k \le 0$. 
The usual trace of a matrix corresponds to the $0$-th trace.\\
If the $k$-th trace is zero for all $k \ge 0$ (respectively $k \le 0$), $M$ is called \emph{supertraceless} (resp. \emph{subtraceless}). 
\item The \emph{$k$-th antitrace} of $M$ is the sum $\sum_{i+j=d+1-k} m_{i,j}$.\\
If the $k$-th antitrace is zero for all $k \ge 0$ (respectively $k \le 0$), $M$ is called \emph{superantitraceless} (resp. \emph{subantitraceless}). 
\end{enumerate}
\end{defn}

\begin{defn}\label{d-block}
	Given any partition $\underline{d}=(d_1,\ldots,d_m)$ of $\frac{n}{2}$, the 
	\emph{$\underline{d}$-block partition} of a $\frac{n}{2} \times \frac{n}{2}$ matrix $X$ is the set of blocks $X_{ij}$, for $i,j=1,\ldots,m$, such that 
	$X_{ij}$ is a $d_i \times d_j$ submatrix of $X$ and
	$$X=\begin{bmatrix}
		X_{11} & \hdots & X_{1m}\\
		\vdots && \vdots\\
		X_{m1} & \hdots & X_{mm}
		\end{bmatrix}$$
\end{defn}

\begin{defn}\label{diamond}
	An $n \times n$ matrix $Y=\begin{bmatrix}
		Y_1 & Y_2\\
		Y_4 & Y_3
		\end{bmatrix} \in V \otimes V$ is called a \emph{diamond matrix} if there exists a partition $\underline{d}$ of $\frac{n}{2}$ such that 
		the diagonal blocks in the $\underline{d}$-block partition of:
		\begin{itemize}
			\item $Y_1$ are superantitraceless, 
			\item the ones of $Y_2$ are supertraceless,
			\item the ones of $Y_3$ are subantitraceless, and 
			\item the ones of $Y_4$ are subtraceless.
		\end{itemize}
The $n$ linear conditions imposing the vanishing of traces and antitraces 
are called the \emph{diamond conditions} ($\di$-conditions), and each of them vanishes on a $\di$-\emph{hyperplane}.
\end{defn}

\vskip.05in

To understand the origin of the terminology ``diamond'', the reader should look at Figure 1 and Figure 2, where diamond matrices corresponding 
respectively to the partition $\underline{d}=(\frac{n}{2})$ and to $\underline{d}=(d_1,d_2,d_3)$ are shown. 
The diagonal lines represent the traces and antitraces that are zero.

\begin{center}

	\scalebox{0.9} 
	{
	\begin{pspicture}(0,-3.525)(7.025,3.525)
	\psline[linewidth=0.03cm](0.0,0.0)(3.5,3.5)
	\psline[linewidth=0.03cm](0.0,0.5)(3.0,3.5)
	\psline[linewidth=0.03cm](0.0,1.0)(2.5,3.5)
	\psline[linewidth=0.03cm](0.0,1.5)(2.0,3.5)
	\psline[linewidth=0.03cm](0.0,2.0)(1.5,3.5)
	\psline[linewidth=0.03cm](0.0,2.5)(1.0,3.5)
	\psline[linewidth=0.03cm](3.5,-3.5)(7.0,0.0)
	\psline[linewidth=0.03cm](4.0,-3.5)(7.0,-0.5)
	\psline[linewidth=0.03cm](4.5,-3.5)(7.0,-1.0)
	\psline[linewidth=0.03cm](5.0,-3.5)(7.0,-1.5)
	\psline[linewidth=0.03cm](5.5,-3.5)(7.0,-2.0)
	\psline[linewidth=0.03cm](6.0,-3.5)(7.0,-2.5)
	\psline[linewidth=0.03cm](6.5,-3.5)(7.0,-3.0)
	\psline[linewidth=0.03cm](3.5,3.5)(7.0,0.0)
	\psline[linewidth=0.03cm](4.0,3.5)(7.0,0.5)
	\psline[linewidth=0.03cm](4.5,3.5)(7.0,1.0)
	\psline[linewidth=0.03cm](5.0,3.5)(7.0,1.5)
	\psline[linewidth=0.03cm](5.5,3.5)(7.0,2.0)
	\psline[linewidth=0.03cm](6.0,3.5)(7.0,2.5)
	\psline[linewidth=0.03cm](6.5,3.5)(7.0,3.0)
	\psline[linewidth=0.03cm](0.0,0.0)(3.5,-3.5)
	\psline[linewidth=0.03cm](0.0,-0.5)(3.0,-3.5)
	\psline[linewidth=0.03cm](0.0,-1.0)(2.5,-3.5)
	\psline[linewidth=0.03cm](0.0,-1.5)(2.0,-3.5)
	\psline[linewidth=0.03cm](0.0,-2.0)(1.5,-3.5)
	\psline[linewidth=0.03cm](0.0,-2.5)(1.0,-3.5)
	\psline[linewidth=0.03cm](0.0,-3.0)(0.5,-3.5)
	\psline[linewidth=0.03cm](0.0,3.0)(0.5,3.5)
	\psline[linewidth=0.024cm,linestyle=dashed,dash=0.16cm 0.16cm](0.0,0.0)(7.0,0.0)
	\psline[linewidth=0.024cm,linestyle=dashed,dash=0.16cm 0.16cm](3.5,3.5)(3.5,-3.5)
	\psframe[linewidth=0.05,dimen=middle](7.0,3.5)(0.0,-3.5)
	\usefont{T1}{ptm}{m}{n}
		\rput(3,-3.9){Figure 1. A diamond matrix corresponding to the partition $\underline{d}=(\frac{n}{2})$.}
		\end{pspicture} 
	}

\end{center}

\vskip.3in

\begin{center}
	\scalebox{0.65} 
{
\begin{pspicture}(0,-5.025)(10.025,5.025)
\psframe[linewidth=0.04,dimen=middle](0.5,5.0)(0.0,4.5)
\psframe[linewidth=0.04,dimen=middle](2.0,4.5)(0.5,3.0)
\psframe[linewidth=0.04,dimen=middle](5.0,3.0)(2.0,0.0)
\psframe[linewidth=0.04,dimen=middle](5.5,5.0)(5.0,4.5)
\psframe[linewidth=0.04,dimen=middle](7.0,4.5)(5.5,3.0)
\psframe[linewidth=0.04,dimen=middle](10.0,3.0)(7.0,0.0)
\psframe[linewidth=0.04,dimen=middle](5.5,0.0)(5.0,-0.5)
\psframe[linewidth=0.04,dimen=middle](7.0,-0.5)(5.5,-2.0)
\psframe[linewidth=0.04,dimen=middle](10.0,-2.0)(7.0,-5.0)
\psframe[linewidth=0.04,dimen=middle](0.5,0.0)(0.0,-0.5)
\psframe[linewidth=0.04,dimen=middle](2.0,-0.5)(0.5,-2.0)
\psframe[linewidth=0.04,dimen=middle](5.0,-2.0)(2.0,-5.0)
\psline[linewidth=0.03cm](0.5,3.0)(2.0,4.5)
\psline[linewidth=0.03cm](0.5,3.5)(1.5,4.5)
\psline[linewidth=0.03cm](0.5,4.0)(1.0,4.5)
\psline[linewidth=0.03cm](0.0,4.5)(0.5,5.0)
\psline[linewidth=0.03cm](2.0,2.5)(2.5,3.0)
\psline[linewidth=0.03cm](2.0,2.0)(3.0,3.0)
\psline[linewidth=0.03cm](2.0,0.0)(5.0,3.0)
\psline[linewidth=0.03cm](2.0,0.5)(4.5,3.0)
\psline[linewidth=0.03cm](2.0,1.0)(4.0,3.0)
\psline[linewidth=0.03cm](2.0,1.5)(3.5,3.0)
\psline[linewidth=0.03cm](5.0,5.0)(5.5,4.5)
\psline[linewidth=0.03cm](5.5,4.5)(7.0,3.0)
\psline[linewidth=0.03cm](7.0,3.0)(10.0,0.0)
\psline[linewidth=0.03cm](6.0,4.5)(7.0,3.5)
\psline[linewidth=0.03cm](6.5,4.5)(7.0,4.0)
\psline[linewidth=0.03cm](7.5,3.0)(10.0,0.5)
\psline[linewidth=0.03cm](8.0,3.0)(10.0,1.0)
\psline[linewidth=0.03cm](8.5,3.0)(10.0,1.5)
\psline[linewidth=0.03cm](9.0,3.0)(10.0,2.0)
\psline[linewidth=0.03cm](9.5,3.0)(10.0,2.5)
\psline[linewidth=0.03cm](0.0,0.0)(0.5,-0.5)
\psline[linewidth=0.03cm](0.5,-0.5)(2.0,-2.0)
\psline[linewidth=0.03cm](2.0,-2.0)(5.0,-5.0)
\psline[linewidth=0.03cm](5.0,-0.5)(5.5,0.0)
\psline[linewidth=0.03cm](5.5,-2.0)(7.0,-0.5)
\psline[linewidth=0.03cm](7.0,-5.0)(10.0,-2.0)
\psline[linewidth=0.03cm](6.0,-2.0)(7.0,-1.0)
\psline[linewidth=0.03cm](6.5,-2.0)(7.0,-1.5)
\psline[linewidth=0.03cm](7.5,-5.0)(10.0,-2.5)
\psline[linewidth=0.03cm](8.0,-5.0)(10.0,-3.0)
\psline[linewidth=0.03cm](8.5,-5.0)(10.0,-3.5)
\psline[linewidth=0.03cm](9.0,-5.0)(10.0,-4.0)
\psline[linewidth=0.03cm](9.5,-5.0)(10.0,-4.5)
\psline[linewidth=0.03cm](0.5,-1.0)(1.5,-2.0)
\psline[linewidth=0.03cm](0.5,-1.5)(1.0,-2.0)
\psline[linewidth=0.03cm](2.0,-2.5)(4.5,-5.0)
\psline[linewidth=0.03cm](2.0,-3.0)(4.0,-5.0)
\psline[linewidth=0.03cm](2.0,-3.5)(3.5,-5.0)
\psline[linewidth=0.03cm](2.0,-4.0)(3.0,-5.0)
\psline[linewidth=0.03cm](2.0,-4.5)(2.5,-5.0)
\psline[linewidth=0.024cm,linestyle=dashed,dash=0.16cm 0.16cm](5.0,5.0)(5.0,-5.0)
\psline[linewidth=0.024cm,linestyle=dashed,dash=0.16cm 0.16cm](0.0,0.0)(10.0,0.0)
\psframe[linewidth=0.05,dimen=middle](10.0,5.0)(0.0,-5.0)
\usefont{T1}{ptm}{m}{n}
\rput(5,-5.5){\Large{Figure 2. A diamond matrix corresponding to a partition $\underline{d}=(d_1,d_2,d_3)$.}}
\end{pspicture} 
}
\end{center}

\vskip.3in

\begin{prop}\cite[Corollary 3.13]{note_vanni}\label{AJB-BJA diamond} 
Let $B \in \sk$ be such that $JB \in \W_{reg}$ is a regular skew-Hamiltonian, and let $A \in \sk$ be any skew-symmetric matrix. 
Then $AJB-BJA$ is symplectically congruent to a diamond matrix. 
\end{prop}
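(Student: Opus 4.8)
The plan is to exploit symplectic congruence to put $JB$ in the normal form of Lemma~\ref{up to sp}, to check that this induces ordinary congruence on the symmetric matrix $S:=AJB-BJA$, and then to read off the $\di$-conditions of Definition~\ref{diamond} block by block, reusing the commutator computation from the proof of Proposition~\ref{dimension lemma}.

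First I would record the action of $\Sp(n)$ on $S$. Since $MJ\tr{M}=J$ for every $M\in\Sp(n)$, replacing $A$ and $B$ by $\tr{M}AM$ and $\tr{M}BM$ as in \eqref{action2} gives
\begin{align}
\nonumber (\tr{M}AM)J(\tr{M}BM)-(\tr{M}BM)J(\tr{M}AM)&=\tr{M}A(MJ\tr{M})BM-\tr{M}B(MJ\tr{M})AM\\
\nonumber &=\tr{M}\,(AJB-BJA)\,M,
\end{align}
so the simultaneous congruence on $(A,B)$ induces the congruence $S\mapsto\tr{M}SM$; by Lemma~\ref{consistent} this is compatible with conjugating the skew-Hamiltonian $JB$. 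Hence it suffices to prove the claim after conjugating $JB$ symplectically. By Lemma~\ref{up to sp} I may assume $JB=\left[\begin{smallmatrix}P&0\\0&\tr{P}\end{smallmatrix}\right]$, and conjugating further by the symplectic matrix $\left[\begin{smallmatrix}T&0\\0&\tr{T}^{-1}\end{smallmatrix}\right]$ turns $P$ into $T^{-1}PT$; since $JB$ is regular I may therefore take $P=\bigoplus_{i=1}^m P_i$ to be a direct sum of Jordan blocks $P_i=\lambda_i I+N_i$ of distinct eigenvalues and sizes $d_i$. This is precisely the partition $\underline{d}=(d_1,\dots,d_m)$ of $\frac n2$ appearing in Definition~\ref{d-block}.

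Next I would parametrise $JA=\left[\begin{smallmatrix}C_1&C_2\\C_3&\tr{C_1}\end{smallmatrix}\right]$ with $C_2,C_3\in\sk$, this being the general shape of a skew-Hamiltonian. The commutator formula established inside the proof of Proposition~\ref{dimension lemma} then reads
$$[JA,JB]=\begin{bmatrix}[C_1,P]&C_2\tr{P}-PC_2\\C_3P-\tr{P}C_3&[\tr{C_1},\tr{P}]\end{bmatrix},$$
and since $AJB-BJA=-J[JA,JB]$ I obtain the (manifestly symmetric) matrix
$$S=\begin{bmatrix}\tr{P}C_3-C_3P&\tr{P}\tr{C_1}-\tr{C_1}\tr{P}\\C_1P-PC_1&C_2\tr{P}-PC_2\end{bmatrix}.$$
Because $P$ is block diagonal, each diagonal block of $S$ in the $\underline{d}$-block partition is governed by a single $P_i$, and the scalar summand $\lambda_i I$ cancels out of every commutator and Sylvester expression, leaving only the nilpotent shift $N_i$.

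It then remains to do the index bookkeeping on one block $N=N_i$ of size $d$. Writing $(NX)_{ab}=X_{a+1,b}$ and $(XN)_{ab}=X_{a,b-1}$, one checks that the diagonal block of $C_1P-PC_1$ has vanishing sums along every diagonal $a-b=k\ge 0$ (subtraceless) and that of $\tr{P}\tr{C_1}-\tr{C_1}\tr{P}$ vanishing sums along every $b-a=k\ge0$ (supertraceless); using $C_2,C_3\in\sk$ one checks likewise that the diagonal block of $\tr{P}C_3-C_3P$ is superantitraceless and that of $C_2\tr{P}-PC_2$ subantitraceless. Matching the four blocks of $S$ with $Y_1,Y_2,Y_4,Y_3$, these are exactly the $\di$-conditions, so $S$ is a diamond matrix for $\underline{d}$; as a consistency check the codimension $\tfrac32 n$ they cut out on $\sy$ agrees with the dimension of $\cnb=\im\phib$ computed above. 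The one genuinely delicate point — and the main obstacle — is this final bookkeeping: the clean telescoping cancellation of the diagonal sums survives on only one side because of the boundary terms at the edges of each block, and it is exactly those boundary terms that force the asymmetry between the super- and sub-conditions required by Definition~\ref{diamond}.
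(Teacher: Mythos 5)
Your proof is correct and follows essentially the same route as the paper's: establish the $\Sp(n)$-equivariance of $\phib$ under the congruence action, reduce by symplectic conjugation to the case where $JB$ (equivalently $P$) is in Jordan normal form, and then verify the $\di$-conditions quadrant by quadrant using the block commutator formula from Proposition \ref{dimension lemma}. The only difference is one of detail: where the paper simply asserts that ``a direct computation now shows that $S$ is a diamond matrix,'' you actually carry out that computation, correctly identifying the telescoping cancellations and the boundary terms that produce exactly the super/sub asymmetry of Definition \ref{diamond}.
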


\begin{proof} 
	We start by making the observation that the morphism $\phib$ is $\Sp(n)$-equivariant under the congruence action. (It is easy to check that the group $\Sp(n)$ acts by congruence not only on skew-symmetric, but also on symmetric matrices.) 

Given any skew-symmetric matrix $A$ one has:
	\begin{equation}\label{equivariancy}
		M \star \phib(A)=\tr{M} \: (\phib(A)) \: M = \varphi^{\tr{M}B M}(\tr{M}A M)=\varphi^{M \star B}(M \star A).
	\end{equation}
	
	Now put $JB$ in its Jordan normal form via symplectic conjugation. The Jordan normal form of $JB$ will consist in $m$ Jordan blocks, each of dimension $d_i$ with $\sum_{i=1}^m d_i=\frac{n}{2}$. Set $\underline{d}=(d_1,\ldots,d_m)$ as before. For any $A \in \sk$, 
consider the $\underline{d}$-block partitions of the four quadrants of $S=\phib(A)$. A direct computation now shows that $S$ is a diamond matrix.
\end{proof}

\begin{rem}
	Notice that for a symmetric matrix the $\di$-conditions reduce to $\frac{3}{2}n$ conditions, 
	simply because if the blocks of $S_2$ are supertraceless, then the ones of $\tr{S_2}$ will automatically be subtraceless.
\end{rem}

\vskip.1in

Denote by $\syrc$ the determinantal variety:
$$\syrc:=\{ S \in S^2V \:|\: \rk S \le r\},$$
which is irreducible of dimension $nr-{r \choose 2}$. 
There are several proofs of this fact, we briefly recall here the one due to Kempf \cite{kempf}. 

Let $\G(n-r,n)$ be the Grassmannian of $n-r$-dimensional subspaces of the $n$-dimensional vector space $V$, 
and consider the incidence variety:
\begin{equation}\label{affine bundle}
	\tilde{X}:=\{(L,S) \in \G(n-r,n)\times \sy\:|\:L \subseteq \Ker S\},
\end{equation}
that comes with the two standard projections:
\begin{equation}
	\xymatrix@C-4ex{&\tilde{X}\ar[dl]_{\pi_1}\ar[dr]^{\pi_2}&\\
	\G(n-r,n)&&\:\:\:\:\:\sy\:\:\:\:\:
	}
\end{equation}

It is an affine bundle over the Grassmannian, whose fibers are vector spaces of dimension equal to $\dim S^2(V/L)={r+1 \choose 2}$. 
The canonical projection of $\tilde{X}$ on $\sy$ is surjective and the 
restriction of it to the inverse image of the open subset of all symmetric matrices of rank $r$ is injective; 
hence $\syrc$ is irreducible of dimension: 
$$\mbox{$\dim \syrc= \dim \tilde{X}= \dim \G(n-r,n)+ {r+1 \choose 2}=nr - {r \choose 2}.$}$$

\vskip.2in

Intersecting the linear space $\cnb$ with the determinantal variety $\syrc$ we define:
\begin{equation}\label{def crnb}
	\crnb:=\cnb \cap \syrc = \{S \in \sy\:|\:S \in \cnb,\: \rk S \le r\}.
\end{equation}

\begin{lem}\label{C_b irred}
	If $B \in \sk$ is such that $JB \in \W_{reg}$ is regular, then $\crnb$ is irreducible of dimension $nr-\frac{3}{2}n-{r \choose 2}$ for $r=n$ and $n \ge 4$, and 
	for $r=n-1$ and $n \ge 8$.
\end{lem}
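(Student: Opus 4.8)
The plan is to deduce irreducibility from the two softer properties \emph{connected} and \emph{normal}: a connected normal variety is irreducible, and the dimension will then drop out of the construction. First I would apply Proposition \ref{AJB-BJA diamond}. Since $JB$ is regular, after a symplectic congruence (which preserves ranks and the symmetry of matrices, and hence is an isomorphism of $\crnb$) the space $\cnb$ becomes the space of symmetric diamond matrices attached to the partition $\underline d$ of block sizes of $JB$; by the Remark following that proposition it is cut out in $\sy$ by exactly $\frac{3}{2}n$ independent $\di$-hyperplanes. Thus $\crnb=\cnb\cap\syrc$ is a linear section of the symmetric determinantal variety by $\frac32 n$ hyperplanes, and it is a cone because all the defining conditions are homogeneous. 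The case $r=n$ is then immediate: there $\syrc=\sy$, so $\crnb=\cnb$ is a linear space of dimension $\binom{n+1}{2}-\frac32 n=\frac{n(n-2)}{2}$, trivially irreducible of the asserted dimension (the hypothesis $n\ge 4$ only guarantees positive dimension). The content lies entirely in the case $r=n-1$, on which I concentrate.

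For $r=n-1$ the ambient $\syrc$ is the symmetric determinantal hypersurface $\{\det=0\}$, an irreducible arithmetically Cohen--Macaulay variety whose singular locus is $\{S:\rk S\le n-2\}$, of codimension $2$. Write $d:=(n-1)n-\frac32 n-\binom{n-1}{2}=\frac{n^2-2n-2}{2}$ for the expected dimension. The inequality $\dim\crnb\ge d$ is automatic from Krull's theorem, since $\crnb$ is cut from $\syrc$ by $\frac32 n$ hyperplanes. Once the reverse inequality $\dim\crnb=d$ is known (established below, as a by-product of the singular-locus estimate), the $\frac32 n$ linear forms form a regular sequence, so $\crnb$ is a complete intersection of the expected codimension inside the arithmetically Cohen--Macaulay cone $\syrc$ and is therefore itself arithmetically Cohen--Macaulay. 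As its dimension $d$ satisfies $d\ge 2$ exactly when $n\ge 4$, the connectedness theorem for Cohen--Macaulay varieties (Hartshorne--Grothendieck) applies to the punctured cone, so $\mathbb P(\crnb)$, and hence $\crnb$, is connected. This is the ``strong connectedness result''.

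It remains to verify Serre's condition $R_1$; combined with $S_2$ (from Cohen--Macaulayness) this gives normality, and connected plus normal yields irreducibility. A point $S\in\crnb$ can be singular only if $\rk S\le n-2$, or $\rk S=n-1$ while $\cnb$ fails to meet $\syrc$ transversally at $S$; since $\syrc$ is a hypersurface, the latter means $\cnb\subseteq T_S\syrc$. Hence
$$\mathrm{Sing}(\crnb)\subseteq\bigl(\{S:\rk S\le n-2\}\cap\cnb\bigr)\cup\bigl\{S\in\cnb:\rk S=n-1,\ \cnb\subseteq T_S\syrc\bigr\}.$$
The main obstacle is to bound the dimension of both loci by $d-2$. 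Because the $\di$-hyperplanes are highly non-general, no Bertini-type genericity is available and one must use the diamond combinatorics explicitly. For the deeper stratum I would employ the Kempf incidence variety $\{(L,S)\in\G(2,n)\times\cnb : L\subseteq\Ker S\}$ and estimate, uniformly in $L$, how many of the $\di$-conditions remain independent on $S^2(V/L)$; this controls $\dim(\{\rk\le n-2\}\cap\cnb)$, while an analogous tangent-space computation controls the non-transverse rank-$(n-1)$ locus. The required inequality holds only for $n$ large, and this is precisely where the bound $n\ge 8$ is forced. Granting the estimate, the transverse rank-$(n-1)$ locus is smooth, dense, and of dimension $d$, which simultaneously closes the dimension gap ($\dim\crnb=d$), secures $R_1$, and therefore delivers normality; together with connectedness this proves that $\crnb$ is irreducible of dimension $nr-\frac32 n-\binom{r}{2}$.
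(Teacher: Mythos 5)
Your overall architecture is sound and, at the level of strategy, parallels the paper's: both arguments rest on (i) a connectedness statement for the linear section $\crnb=\cnb\cap\syrc$ and (ii) a proof that the singular/non-transverse locus has codimension at least $2$, with the case $r=n$ trivial in both. But your proposal has a genuine gap, and it sits exactly where all of the difficulty of this lemma lives: the estimate bounding the bad locus by $d-2$ is never carried out --- you write ``Granting the estimate''. That estimate \emph{is} the content of the paper's proof. Working on the incidence variety $\tilde X_{n-1}\subset\G(1,n)\times\syrc$, the paper proves (Claim 1) that the $n$ linear conditions coming from $L\subseteq\Ker S$ fail to be independent from the $\frac32 n$ diamond conditions precisely when $L$ lies in $\bigcup_{i=1}^m\langle e_{\delta_i+1},e_{\delta_{i+1}+\frac n2}\rangle$, a $1$-dimensional family of \emph{bad lines}, and (Claim 2) that even over a bad line at most three $\di$-conditions can be absorbed, so the fiber dimension is at most $\binom n2-(\tfrac32 n-3)$ and the bad locus has dimension at most $1+\binom n2-(\tfrac32 n-3)$, hence codimension at least $n-5$ in $p_2^{-1}(\crnb)$. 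The hypothesis $n\ge 8$ is read off from $n-5\ge 3$. Without this combinatorial analysis of diamond versus kernel conditions no bound on $n$ can be extracted; your sentence ``the required inequality holds only for $n$ large, and this is precisely where the bound $n\ge 8$ is forced'' restates what must be proved rather than proving it.

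A secondary remark on the feasibility of your variant. Because you work downstairs on $\crnb$ itself and aim at Serre's criterion, you must control two separate loci: the deeper stratum $\{\rk\le n-2\}\cap\cnb$ (for which you propose a new incidence analysis over $\G(2,n)$) and the non-transverse rank-$(n-1)$ points where $\cnb\subseteq T_S\syrc$. The paper never faces this dichotomy: since $\tilde X_{n-1}$ is smooth and the fibers of $p_1$ are linear spaces, smoothness of $p_2^{-1}(\crnb)$ at any point $(L,S)$ --- whatever the rank of $S$ --- reduces to the single linear-algebra question of independence of the $\di$-conditions from the conditions $L\subseteq\Ker S$, and irreducibility of $\crnb$ then follows by taking the image under the proper map $p_2$. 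Note also that the paper's connectedness input is not Cohen--Macaulayness: it is Bertini for a \emph{general} codimension-$\frac32 n$ linear section \cite[Theorem 3.3.1]{Positivity} combined with the specialization theorem \cite[Expos\'e $\mathrm{XIII}$, coroll. $2.2$]{SGA2}, which gives connectedness in dimension $d-1$ for the special section with no hypotheses on that section; your route instead needs $\dim\crnb=d$ before $S_2$ can even be asserted, so the regular-sequence/CM step is one more dependency on the very estimate you have not supplied (though, granting that estimate, your normality packaging would indeed work).
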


\begin{proof}
The case $r=n$ is trivial, hence in what follows we will concentrate on the case $r=n-1$.

Similarly to what is done above, define the incidence variety :
$$\tilde{X}_r=\{(L,S)\in\G(n-r,n) \times \syrc\:|\: L \subseteq \Ker S\}.$$ 

Again, we have the two standard projections:
\begin{equation}\label{incidence}
	\xymatrix@C-3ex{&\tilde{X}_r\ar[dl]_{p_1}\ar[dr]^{p_2}&&\\
	\G(n-r,n)&&\syrc \ar@{<-^{)}}[r]&\crnb
	}
\end{equation}

In order to prove Lemma \ref{C_b irred}, it is enough to show that $p_2^{-1}(\crnb)$ is irreducible. We will do so for $r=n-1$. 

By definition, the variety $\crnb$ is the intersection of $\syrc$ with $\cnb$, which is a vector space of codimension $\frac{3}{2}n$. Let $K \subset S^2 V$ be a general vector space of codimension $\frac{3}{2} n$. Since $\frac{3}{2} n < nr - {r \choose 2}$, we can apply Bertini's theorem for a proper morphism, 
see \cite[Theorem 3.3.1]{Positivity}, and we obtain that $p_2^{-1}(K)$ is irreducible of dimension $nr - {r \choose 2} - \frac{3}{2} n$. 
Then, using \cite[Expos\'e $\mathrm{XIII}$, coroll. $2.2$]{SGA2}, we deduce that $p_2^{-1}(\cnb) = p_2^{-1}(\crnb)$ is connected in dimension $nr - {r \choose 2} - \frac{3}{2} n -1$.

This connectedness result allow us to reduce the proof of the irreducibility of $p_2^{-1}(\crnb)$ to showing that it has the expected dimension $\dim S^2V_{\leq r} - \frac{3}{2}n$, 
and that it is smooth in codimension 1, that is, its singular locus is in codimension at least $2$.

To prove that this is indeed true, we study the projection ${p_1}|_{p_2^{-1}(\crnb)}$:
$$
{p_1}|_{p_2^{-1}(\crnb)} : p_2^{-1}(\crnb) \rightarrow \G(n-r,n).
$$

Recall that $p_2^{-1}(\crnb)$ is defined as:
$$
p_2^{-1}(\crnb)=\{(L,S)\in \tilde{X}_r\:|\: S\:\hbox{satisfies the}\:\di\hbox{-conditions}\}.
$$

Hence given an element $L$ of the Grassmannian $\G(n-r,n)$, the fiber of ${p_1}|_{p_2^{-1}(\crnb)}$ over $L$ is:
$$
{p_1}^{-1}|_{p_2^{-1}(\cnb)}(L) = \{S \in S^2V_{\leq r} \:|\: L \subseteq \Ker S,\: S\:\hbox{satisfies the}\:\di\hbox{-conditions}\}.
$$

We wish to identify the elements $L$ in the Grassmannian whose fiber ${p_1}^{-1}|_{p_2^{-1}(\crnb)}(L)$ is not a linear space of the expected dimension 
${r+1 \choose 2} - \frac{3}{2} n$. In other words, we are looking for all $L$ in $\G(n-r,n)$ for which the conditions 
``$L \subseteq \Ker S$" are not independent from the $\di$-conditions.

\vskip.05in

Let us denote by $\{e_1,\ldots,e_n\}$ the basis of $V$ with respect to which the $\di$-conditions are represented. We recall that it is 
the basis in which the regular skew-Hamiltonian $JB$ is in its Jordan normal form.  

\underline{From now on, we set $r=n-1$.} Let $L \in \G(1,n)$ be generated by $L=\langle\: \ell \:\rangle$, with 
$\ell=\sum_{k=1}^n \lambda_k e_k$. Then if $S=(s_{pq})$, the condition $L \subseteq \Ker S$ translates into the $n$ conditions, one for 
each $j=1,\ldots,n$, that we denote by:
$$
\sh{}^j:\:\:\:\:\:\:\:\:\:\:\:\:\:
\sum_{k=1}^n \lambda_k s_{kj} =0$$

Notice that each $\sh{}^j$ involves the $n$ entries of the $j$-th column of the matrix $S$. For this reason, we will think of them as \emph{vertical} conditions. 

The symmetry of the matrix $S$ yields that for any vertical condition $\sh{}^j$ there is a \emph{horizontal} one, involving the $j$th row of $S$. 
We denote such horizontal condition by $\sh{}_j$, with obvious notation.

In the same mindset, the $\di$-conditions, involving traces and anti-traces, are thought of as \emph{diagonal} conditions. 
The number of entries of $S$ that are involved in a diagonal condition varies from 1 to $n/2$.

The notation has been chosen in order to remind the reader whether a particular condition is diagonal ($\di$), or vertical and horizontal ($\sh$). 

\vskip.05in

Consider now a $\di$-condition $d$. Notice that it involves entries of the matrix $S$ that are all in one of the four quadrants. 
We say that $d$ \emph{is generated} by $\sh$-conditions if there exist 
some vertical and horizontal conditions $v_i$ and $h_j$ such that it is possible to write $d= \sum \alpha_iv_i + \sum \beta_jh_j + \delta$, 
where $\delta$ is a linear form that only involves entries of $S$ that are not in the same quadrant of $d$.

\begin{center}
\scalebox{0.65} 
{
\begin{pspicture}(0,-5.02)(10.04,5.02)
\definecolor{fucsia}{rgb}{0.9,0.2,1.0}
\definecolor{blu}{rgb}{0.2,0.4,1.0}
\definecolor{rosso}{rgb}{1.0,0.2,0.0}
\psframe[linewidth=0.04,dimen=middle](0.5,5.02)(0.0,4.52)
\psframe[linewidth=0.04,dimen=middle](2.0,4.52)(0.5,3.02)
\psframe[linewidth=0.04,dimen=middle](5.0,3.02)(2.0,0.02)
\psframe[linewidth=0.04,dimen=middle](5.5,5.02)(5.0,4.52)
\psframe[linewidth=0.04,dimen=middle](7.0,4.52)(5.5,3.02)
\psframe[linewidth=0.04,dimen=middle](10.0,3.02)(7.0,0.02)
\psframe[linewidth=0.04,dimen=middle](5.5,0.02)(5.0,-0.48)
\psframe[linewidth=0.04,dimen=middle](7.0,-0.48)(5.5,-1.98)
\psframe[linewidth=0.04,dimen=middle](10.0,-1.98)(7.0,-4.98)
\psframe[linewidth=0.04,dimen=middle](0.5,0.02)(0.0,-0.48)
\psframe[linewidth=0.04,dimen=middle](2.0,-0.48)(0.5,-1.98)
\psframe[linewidth=0.04,dimen=middle](5.0,-1.98)(2.0,-4.98)
\psline[linewidth=0.03cm](0.52,3.0)(2.02,4.5)
\psline[linewidth=0.03cm](0.52,3.5)(1.52,4.5)
\psline[linewidth=0.03cm](0.52,4.0)(1.02,4.5)
\psline[linewidth=0.03cm](0.02,4.5)(0.52,5.0)
\psline[linewidth=0.03cm](2.02,2.5)(2.52,3.0)
\psline[linewidth=0.03cm](2.02,2.0)(3.02,3.0)
\psline[linewidth=0.03cm](2.02,0.0)(5.02,3.0)
\psline[linewidth=0.03cm](2.02,0.5)(4.52,3.0)
\psline[linewidth=0.07cm,linecolor=fucsia](2.02,1.0)(4.02,3.0)
\psline[linewidth=0.03cm](2.02,1.5)(3.52,3.0)
\psline[linewidth=0.03cm](5.02,5.0)(5.52,4.5)
\psline[linewidth=0.03cm](5.52,4.5)(7.02,3.0)
\psline[linewidth=0.03cm](7.02,3.0)(10.02,0.0)
\psline[linewidth=0.03cm](6.02,4.5)(7.02,3.5)
\psline[linewidth=0.03cm](6.52,4.5)(7.02,4.0)
\psline[linewidth=0.03cm](7.52,3.0)(10.02,0.5)
\psline[linewidth=0.03cm](8.02,3.0)(10.02,1.0)
\psline[linewidth=0.03cm](8.52,3.0)(10.02,1.5)
\psline[linewidth=0.03cm](9.02,3.0)(10.02,2.0)
\psline[linewidth=0.03cm](9.52,3.0)(10.02,2.5)
\psline[linewidth=0.03cm](0.02,0.0)(0.52,-0.5)
\psline[linewidth=0.03cm](0.52,-0.5)(2.02,-2.0)
\psline[linewidth=0.03cm](2.02,-2.0)(5.02,-5.0)
\psline[linewidth=0.03cm](5.02,-0.5)(5.52,0.0)
\psline[linewidth=0.03cm](5.52,-2.0)(7.02,-0.5)
\psline[linewidth=0.03cm](7.02,-5.0)(10.02,-2.0)
\psline[linewidth=0.03cm](6.02,-2.0)(7.02,-1.0)
\psline[linewidth=0.03cm](6.52,-2.0)(7.02,-1.5)
\psline[linewidth=0.03cm](7.52,-5.0)(10.02,-2.5)
\psline[linewidth=0.03cm](8.02,-5.0)(10.02,-3.0)
\psline[linewidth=0.03cm](8.52,-5.0)(10.02,-3.5)
\psline[linewidth=0.03cm](9.02,-5.0)(10.02,-4.0)
\psline[linewidth=0.03cm](9.52,-5.0)(10.02,-4.5)
\psline[linewidth=0.03cm](0.52,-1.0)(1.52,-2.0)
\psline[linewidth=0.03cm](0.52,-1.5)(1.02,-2.0)
\psline[linewidth=0.03cm](2.02,-2.5)(4.52,-5.0)
\psline[linewidth=0.03cm](2.02,-3.0)(4.02,-5.0)
\psline[linewidth=0.03cm](2.02,-3.5)(3.52,-5.0)
\psline[linewidth=0.03cm](2.02,-4.0)(3.02,-5.0)
\psline[linewidth=0.03cm](2.02,-4.5)(2.52,-5.0)
\psline[linewidth=0.024cm,linestyle=dashed,dash=0.16cm 0.16cm](5.02,5.0)(5.02,-5.0)
\psline[linewidth=0.024cm,linestyle=dashed,dash=0.16cm 0.16cm](0.02,0.0)(10.02,0.0)
\psframe[linewidth=0.05,dimen=middle](10.0,5.02)(0.0,-4.98)
\psline[linewidth=0.03cm,linecolor=blu](2.27,4.97)(2.27,-4.93)
\psline[linewidth=0.03cm,linecolor=blu](2.77,4.97)(2.77,-4.93)
\psline[linewidth=0.03cm,linecolor=blu](3.27,4.97)(3.27,-4.93)
\psline[linewidth=0.03cm,linecolor=blu](3.77,4.97)(3.77,-4.93)
\psline[linewidth=0.03cm,linecolor=rosso](0.05,2.75)(9.95,2.75)
\psline[linewidth=0.03cm,linecolor=rosso](0.05,2.25)(9.95,2.25)
\psline[linewidth=0.03cm,linecolor=rosso](0.05,1.75)(9.95,1.75)
\psline[linewidth=0.03cm,linecolor=rosso](0.05,1.25)(9.95,1.25)
\psdots[dotsize=0.24200001,dotstyle=asterisk](2.27,1.25)
\psdots[dotsize=0.24200001,dotstyle=asterisk](2.77,1.75)
\psdots[dotsize=0.24200001,dotstyle=asterisk](3.27,2.25)
\psdots[dotsize=0.24200001,dotstyle=asterisk](3.77,2.75)
\usefont{T1}{ptm}{m}{n}
\rput(5,-5.5){\Large{Figure 3. Representation of a diagonal condition generated by vertical and horizontal ones.}}
\end{pspicture} 
}
\end{center}

\vskip.3in

\begin{lem}\label{diag=vert+horiz}
To generate a diagonal $\di$-condition involving $z$ entries, one needs $x$ vertical conditions of type $\sh{}^i$ and 
$y$ horizontal conditions of type $\sh_{}j$, with $z=x+y$.
\end{lem}

\begin{proof}
In light of the remarks above, the proof is almost immediate.

We fix any integer $1 \le z \le \frac{n}{2}$. A $\di$-condition involving $z$ entries of $S$ is either a trace or an antitrace in one of the $m$ blocks in which $S$ is 
block-partitioned. More in detail, there is an index $1 \le \iota \le m$ such that this $\di$-condition is either 
a $\pm(d_\iota-z)$-th trace or a $\pm(d_\iota-z)$-th antitrace. We only look at the case when it is a $(d_\iota-z)$-th antitrace, since the proof goes through verbatim in the other cases. 
In order for this antitrace condition to be generated by the $\sh$-conditions, we need to see where do its $z$ entries appear. 
Recall from Definition \ref{superantisub}(2) that they are entries of type $s_{pq}$, with $p+q=z+1$. 

Each of these $z$ entries $s_{pq}$ can show up in at most one vertical condition, namely $\sh{}^q$, and 
at most one horizontal condition, namely $\sh{}_p$. And in each of these the entry $s_{pq}$ appears exactly once, that is 
for $k=z+1-q$ and $k=z+1-p$ respectively. Hence the statement follows.
\end{proof}

We now make the following:
\vskip.05in
\emph{Claim 1.} Let the Jordan normal form of $JB$ consist in $m$ Jordan blocks, each of dimension $d_i$ with $\sum_{i=1}^m d_i=n/2$. 
For every $i=1,\ldots,m$ define $\delta_i:=\sum_{j=1}^{i-1} d_j$. Let $L=\langle\: \ell \:\rangle$ be an element in the Grassmannian $\G(1,n)$.
Then the condition $L \subseteq \Ker S$ is non-transverse with the $\di$-conditions if and only if: 
\begin{equation}\label{intersection}
	\mbox{$L \subset \bigcup_{i=1}^m \langle e_{\delta_i+1},e_{\delta_{i+1}+\frac{n}{2}}\rangle.$}
\end{equation}
\vskip.1in 

\emph{Proof of claim 1.} The implication $(\Leftarrow)$ is immediate. If $L$ is included in $\bigcup_{i=1}^m \langle e_{\delta_i+1},e_{\delta_{i+1}+\frac{n}{2}}\rangle$ , then 
there is an index $1 \le \alpha \le m$ such that the generator $\ell \in \langle e_{\delta_\alpha +1},e_{\delta_{\alpha +1}+\frac{n}{2}}\rangle$. This means that 
the condition $\sh{}^{\delta_\alpha + 1}$ reads:
$$\lambda_{\delta_\alpha + 1} s_{\delta_\alpha + 1,\delta_\alpha + 1} + \lambda_{\delta_{\alpha +1}+\frac{n}{2}} s_{\delta_{\alpha +1}+\frac{n}{2},\delta_\alpha + 1}=0.$$

Divide as usual $S=\begin{bmatrix}
          S_1 & S_2\\
          \tr{S_2} & S_3
\end{bmatrix}$
in four quadrants. Among the $\di$-conditions, the vanishing of the $d_\alpha -1$th antitrace of the block $(S_1)_{\alpha\alpha}$ entails that the entry $s_{\delta_\alpha + 1,\delta_\alpha + 1}=0$, while the vanishing of the $-d_\alpha +1$th trace of the block $(\tr{S_2})_{\alpha\alpha}$ entails that the entry $s_{\delta_{\alpha +1}+\frac{n}{2},\delta_\alpha + 1}=0$. Hence $\sh{}^{\delta_\alpha +1}$ is trivially satisfied, and the $\di$-conditions are not independent from the conditions $L \subset \Ker S$.

\vskip.05in

To prove $(\Rightarrow)$, we show that if $L$ is not included in $\bigcup_{i=1}^m \langle e_{\delta_i+1},e_{\delta_{i+1}+\frac{n}{2}}\rangle$, then none of the conditions generating $L \subseteq \Ker S$ 
can be obtained from a combination of $\di$-conditions. Indeed, assume that $L$ is not included in $\bigcup_{i=1}^m \langle e_{\delta_i+1},e_{\delta_{i+1}+\frac{n}{2}}\rangle$, then: 
there exist indices $1 \le \beta < \gamma \le n$ such that:
\begin{enumerate}
\item either $\beta$ is different from all $\delta_i+1$ for $i=1,\ldots,m$, and the coefficient $\lambda_\beta \neq 0$,
\item or if $\beta=\delta_\alpha+1$ for an index $1 \le \alpha \le m$, then $\gamma \neq \delta_{\alpha+1}+\frac{n}{2}$, 
and both coefficients $\lambda_\beta$ and $\lambda_\gamma$ are non-zero. 
\end{enumerate}

In both instances, in every $\sh{}^j$ (and in every $\sh{}_j$) the entries $s_{\beta j}$ (and respectively $s_{j \beta}$) 
will show up with non-zero coefficients. Recall that these same entries do not show up in any other $\sh$-conditions. 

Moreover in both instances the $\beta$-row of the matrix $S$ will cut one of the blocks in which $S$ is partitioned, say the $\iota$-th block. 
Then as soon as $j$ is either smaller than $\delta_\iota$, or bigger than $\delta_{\iota +1}$, none of the indices $s_{\beta j}$ will appear in 
a $\di$-condition, and this concludes the proof.\qed

\vskip.1in

Claim 1 can be re-interpreted as saying that transversality fails whenever there is a whole line contained in the intersection \eqref{intersection}. We call these lines \emph{bad lines}. 
Notice that the bad lines form a subvariety of dimension $1$. 

\vskip.15in

\emph{Claim 2.} Let $L$ be an element of the Grassmannian $\G(1,n)$. Then the dimension of the fiber ${p_1}^{-1}|_{p_2^{-1}({\mathcal{S}}^B_{n-1,n})}(L)$ is bounded above by: 
$$\mbox{$\dim ({p_1}^{-1}|_{p_2^{-1}({\mathcal{S}}^B_{n-1,n})}(L) )\le {n \choose 2} - (\frac{3}{2}n - 3),$}$$
where ${n \choose 2}=\dim S^2(V/L)$ is the dimension of the fiber of the affine bundle $\tilde{X} \to \G(1,n)$ defined in \eqref{affine bundle}.

\vskip.1in

\emph{Proof of claim 2.}
To prove this inequality, we have to evaluate the number of $\di$-conditions which can be generated by the condition $ L \subset \Ker S$. 
A close look at Figure $3$ should convince the reader that either two or three among the $\di$-conditions are generated by the condition $L \subset \Ker S$ if and only if:
\begin{equation}
	\mbox{$L \subset \bigcup_{i=1}^m \langle e_{\delta_i+1},e_{\delta_{i+1}+\frac{n}{2}}\rangle.$}
\end{equation} 
Indeed, if $L$ is not included in $\bigcup_{i=1}^m \langle e_{\delta_i+1},e_{\delta_{i+1}+\frac{n}{2}}\rangle$, 
then using claim $1$ we deduce that the $\di$-conditions and the $L \subset \Ker S$ are independent. 

Conversely, assume that 
$L$ is generated by $\mu e_{\delta_i+1} + \lambda e_{\delta_{i+1}+\frac{n}{2}}$ for some $1 \leq i \leq m$ and $\mu, \lambda \in \mathbb{C}$ non simultaneously zero. 

Suppose that $d_i = 1$. Then the linear space $M_{L \subset \Ker S}$ generated by the conditions:
$$\left\{\begin{array}{l}
\mu s_{1,\delta_i+1} + \lambda s_{1,\delta_{i+1}+\frac{n}{2}} = 0\\
\vdots\\
\mu s_{n,\delta_i+1} + \lambda s_{n,\delta_{i+1}+\frac{n}{2}} = 0
\end{array}\right.$$
intersects the linear space generated by the conditions:
$$\left\{\begin{array}{l}
 s_{\delta_i+1,\delta_i+1} = 0\\
s_{\delta_i+1,\delta_{i+1}+\frac{n}{2}}=0\\
s_{\delta_{i+1}+ \frac{n}{2},\delta_{i+1}+\frac{n}{2}} = 0
\end{array}\right.\:\:\:\:\:\:\:\:\:\:$$
in dimension $2$, and no other $\di$-conditions are included in $M_{L \subset \Ker S}$. 

If instead $d_i > 1$, then that same linear space $M_{L \subset \Ker S}$ intersects the linear space generated by:
$$\left\{\begin{array}{l}
s_{\delta_i+1,\delta_i+1} =0 \\
s_{\delta_i+2,\delta_i+1}+s_{\delta_i+1,\delta_i+2} =0 \\
s_{\delta_i+1,\delta_{i+1}+\frac{n}{2}}=0\\
s_{\delta_i+1,\delta_{i+1}+\frac{n}{2}-1} + s_{\delta_i+2,\delta_{i+1}+\frac{n}{2}}=0\\ 
s_{\delta_{i+1}+ \frac{n}{2},\delta_{i+1}+\frac{n}{2}} = 0\\
s_{\delta_{i+1}+ \frac{n}{2},\delta_{i+1}+\frac{n}{2}-1} + s_{\delta_{i+1}+ \frac{n}{2}-1,\delta_{i+1}+\frac{n}{2}} = 0
\end{array}\right.$$
in dimension $3$, and no other $\di$-conditions are included in $M_{L \subset \Ker S}$. This concludes the proof of claim $2$.
\qed

\vskip.15in

The two claims allow us to give an estimate of  the dimension of the locus $\mathcal{B}_{n-1,n} \subset p_2^{-1}({\mathcal{S}}^B_{n-1,n})$ where the intersection of the $\di$-conditions with 
the $\sh$-conditions is non-transverse. Such dimension is bounded above 
by the sum of the dimension of the variety of bad lines and the dimension of the fiber. So altogether:

\begin{equation}\label{mauvais lieu}
	\mbox{$\dim \mathcal{B}_{n-1,n} \le 1 + {n \choose 2} - (\frac{3}{2}n -3).$}
\end{equation}

On the other hand:

\begin{equation}\label{dim Ytilde}
	\mbox{$\dim p_2^{-1}({\mathcal{S}}^B_{n-1,n})= \dim S^2V_{\le n-1} -\frac{3}{2}n=n(n-1)-{n-1 \choose 2}-\frac{3}{2}n$},
\end{equation}

and combining \eqref{mauvais lieu} and \eqref{dim Ytilde} we get:

$$\mbox{$\codim_{p_2^{-1}({\mathcal{S}}^B_{n-1,n})}(\mathcal{B}_{n-1,n})\ge n-5.$}$$

We can thus conclude that the codimension of $\mathcal{B}_{n-1,n}$ in $p_2^{-1}({\mathcal{S}}^B_{n-1,n})$ 
is strictly greater than 2 as soon as $n \ge 8$. In this range we obtain the irreducibility of $p_2^{-1}({\mathcal{S}}^B_{n-1,n})$, and hence that of ${\mathcal{S}}^B_{n-1,n}$, which is what we wanted. 
\end{proof}

\subsection{Proof of the key lemma}

We need the following well-known irreducibility criterion. 

\begin{lem}\label{criterion}
	Let $f: X \to Y$ be a surjective morphism of algebraic varieties. Suppose that $Y$ is irreducible, 
	and that all the fibers of $f$ are irreducible of the same dimension $d$. 
	If $X$ is pure-dimensional, then it is irreducible of dimension $\dim Y +d$.
\end{lem}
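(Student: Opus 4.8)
The plan is to argue by decomposing $X$ into its finitely many irreducible components $X_1,\ldots,X_s$ and showing that exactly one of them survives. Since $X$ is pure-dimensional, every $X_i$ satisfies $\dim X_i=\dim X$. Restricting $f$ gives morphisms $f|_{X_i}\colon X_i\to Z_i$, where $Z_i:=\overline{f(X_i)}$ is irreducible and $f|_{X_i}$ is dominant onto $Z_i$. Because $f$ is surjective and $Y$ is irreducible, from $Y=\bigcup_i Z_i$ one index must satisfy $Z_i=Y$; after reindexing, say $X_1,\ldots,X_t$ dominate $Y$ and $X_{t+1},\ldots,X_s$ do not, with $t\ge 1$.

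First I would pin down the fibre dimension using the theorem on the dimension of fibres of a dominant morphism. For each dominating component $X_i$ ($i\le t$) the general fibre of $f|_{X_i}$ has dimension $\dim X-\dim Y$, while for a non-dominating $X_i$ ($i>t$) the general fibre of $X_i\to Z_i$ has dimension $\dim X-\dim Z_i>\dim X-\dim Y$, since $\dim Z_i<\dim Y$. I then choose a dense open $V\subseteq Y$ that lies in the image of each dominating $f|_{X_i}$, avoids the proper closed subset $\bigcup_{i>t}Z_i$, and over which each generic fibre dimension is attained. For $y\in V$ one has $f^{-1}(y)=\bigcup_{i\le t}(f|_{X_i})^{-1}(y)$, a finite union of nonempty pieces each of dimension $\dim X-\dim Y$. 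Comparing with the hypothesis $\dim f^{-1}(y)=d$ yields $d=\dim X-\dim Y$, that is $\dim X=\dim Y+d$.

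Next I would use the irreducibility of the fibres to collapse the components. For $y\in V$ the fibre $f^{-1}(y)$ is irreducible and is covered by the closed subsets $(f|_{X_i})^{-1}(y)$, $i\le t$, each of which is nonempty of dimension $d=\dim f^{-1}(y)$; an irreducible variety has no proper closed subset of full dimension, so each of these pieces must equal the whole fibre, whence $f^{-1}(y)\subseteq X_i$ for every $i\le t$. If $t\ge 2$ this forces $f^{-1}(V)\subseteq X_1\cap X_2$; but $f^{-1}(V)\cap X_1$ is a nonempty open, hence dense, subset of the irreducible $X_1$, so $X_1\subseteq X_2$, contradicting distinctness of the components. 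Thus $t=1$. Finally, a non-dominating component $X_i$ ($i>t$) would, by the fibre-dimension computation above, yield a general point $y\in Z_i$ with $\dim f^{-1}(y)\ge\dim(f|_{X_i})^{-1}(y)>d$, contradicting the hypothesis; hence there are none, and $X=X_1$ is irreducible of dimension $\dim Y+d$.

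The main obstacle is the bookkeeping in the second paragraph: one must arrange the dense open $V$ so that simultaneously the generic fibre dimension of every dominating component is attained, every such component actually meets $V$, and the images of the non-dominating components are avoided. This is precisely where the theorem on the dimension of fibres, together with the fact that the image of a dominant morphism contains a dense open set, does the real work; once $V$ is fixed, the irreducibility of the fibres makes the collapse to a single component essentially formal.
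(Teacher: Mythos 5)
The paper offers no proof of this lemma at all --- it is invoked as a ``well-known irreducibility criterion'' --- so there is no argument of the authors to compare yours against. Your proof is correct and complete, and it is the standard argument: pure-dimensionality equalizes the component dimensions, the fibre-dimension theorem applied over your dense open $V$ gives $d=\dim X-\dim Y$, irreducibility of the fibres over $V$ collapses all dominating components to a single one, and the hypothesis that \emph{every} fibre (not just the general one) has dimension $d$ is exactly what kills the non-dominating components, whose fibres over general points of their (lower-dimensional) images would be too big.
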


\begin{lem}\label{open irred}
    Let V be a complex vector space of even dimension $n$. The subvariety:
    $$\crn^0:=\{(A,B) \in \sk \times \sk\:|\: \rk (AJB-BJA)\le r \:\hbox{and}\:JB\:\hbox{is regular}\}$$
    is irreducible of dimension $nr-n + {n \choose 2} - {r \choose 2}$, for $r=n \ge 4$ and for $r=n-1$, and $n \ge 8$.
\end{lem}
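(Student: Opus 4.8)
The plan is to realise $\crn^0$ as a fibration over the space of regular skew-Hamiltonians and to apply the irreducibility criterion of Lemma \ref{criterion}, reading off both the fibre dimension and the fibre irreducibility from Lemma \ref{C_b irred}. Since $B \mapsto JB$ is a linear isomorphism $\sk \xrightarrow{\sim} \W$, I would consider the projection
\[
q : \crn^0 \longrightarrow \W_{reg}, \qquad (A,B) \longmapsto JB .
\]
The target $\W_{reg}$ is a non-empty Zariski-open subset of the vector space $\W$, hence irreducible of dimension ${n \choose 2}$; and $q$ is surjective because every fibre contains the point $(0,B)$.

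Next I would identify the fibres. For $JB \in \W_{reg}$ the fibre $q^{-1}(JB)$ is precisely $(\phib)^{-1}(\crnb)$, where $\phib : \sk \to \cnb$ is the linear surjection of \eqref{phib}. By Proposition \ref{dimension lemma} its kernel has dimension $\frac{n}{2}$; choosing a linear splitting identifies $q^{-1}(JB)$ with $\crnb \times \mathbb{A}^{\frac{n}{2}}$. Lemma \ref{C_b irred} then shows that every fibre is irreducible of the constant dimension
\[
d := \dim \crnb + \tfrac{n}{2} = \Big( nr - \tfrac{3}{2}n - {r \choose 2}\Big) + \tfrac{n}{2} = nr - n - {r \choose 2},
\]
in both numerical ranges $r = n \ge 4$ and $r = n-1$, $n \ge 8$.

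The step I expect to be the main obstacle is to verify that $\crn^0$ is \emph{pure-dimensional}, which is needed to apply Lemma \ref{criterion}: a fibration with irreducible fibres over an irreducible base may well have a reducible total space without this hypothesis. I would establish purity by bounding an arbitrary irreducible component $W$ from both sides. Writing $\crn^0 = \Phi^{-1}(\syrc)$ for the morphism $\Phi(A,B) = AJB - BJA$ on the smooth ambient variety $\{(A,B) \in \sk \times \sk \mid JB \in \W_{reg}\}$ of dimension $2{n \choose 2}$, the standard bound on the height of symmetric determinantal ideals (the ideal of $(r+1)$-minors of a symmetric matrix has height at most ${{n-r+1} \choose 2}$) gives $\codim W \le {{n-r+1} \choose 2}$, that is $\dim W \ge {n \choose 2} + d$. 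Conversely, since all fibres of $q$ have dimension $d$, the theorem on fibre dimensions applied to $q|_W$ gives $\dim W \le \dim \overline{q(W)} + d \le {n \choose 2} + d$. Hence every component has dimension exactly ${n \choose 2} + d$, and $\crn^0$ is pure-dimensional.

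With $q$ surjective onto the irreducible base $\W_{reg}$, all fibres irreducible of the same dimension $d$, and $\crn^0$ pure-dimensional, Lemma \ref{criterion} then yields that $\crn^0$ is irreducible of dimension ${n \choose 2} + d = nr - n + {n \choose 2} - {r \choose 2}$, which is exactly the claimed value.
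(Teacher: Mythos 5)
Your proposal is correct and follows essentially the same route as the paper's own proof: the fibration $(A,B) \mapsto JB$ over the irreducible base $\W_{reg}$, identification of each fibre as an affine $\C^{n/2}$-bundle over $\crnb$ (irreducible of the right dimension by Lemma \ref{C_b irred}), a purity argument combining the fibre-dimension upper bound with a codimension lower bound, and finally Lemma \ref{criterion}. The only difference is one of precision: where the paper says ``counting equations'' for the lower bound, you correctly invoke the height bound $\binom{n-r+1}{2}$ for ideals of $(r+1)$-minors of symmetric matrices, which is the rigorous justification that step needs.
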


\begin{proof}
    Let $\W_{reg}$ be the open subset of all regular skew-Hamiltonians. We have a map:
    \begin{equation}
	    \xymatrix@R-4ex{\crn^0 \ar[r]^-{p}& \W_{reg}\\
	              (A,B) \ar@{|->}[r]&JB}
	\end{equation}

   By the affine Bezout theorem, $\crnb$ is non-empty, and $p$ is surjective. 
   Consider now a fiber $F^B=p^{-1}(B)=\{A \in \sk\:|\:\rk[JA,JB]\le r\}$. There is an epimorphism:
   \begin{equation}
	    \xymatrix@R-4ex{F^B \ar[r]^-{q}& \crnb\\
	              A \ar@{|->}[r]&-J[JA,JB]}
	\end{equation}
	
    Since $JB$ is regular, by Lemma \ref{dimension lemma} $F^B$ is an affine $\C^{n/2}$-bundle over $\crnb$, and hence it is irreducible of dimension:
    $$\mbox{$\dim F^B= \dim \crnb + \frac{n}{2}= nr-n-{r \choose 2}.$}$$ 
    Thus for every irreducible component $C$ of $\crn^0$: 
    $$\mbox{$\dim C \le {n \choose 2}+ \dim F^B= {n \choose 2}+ nr-n  - {r \choose 2}$}.$$ 

Counting equations one sees that the dimension of such components must be at least:
$$\mbox{$2{n \choose 2}-\Big{[}{n+1 \choose 2}-\Big{(}nr -{r \choose 2}\Big{)}\Big{]}= {n \choose 2} + nr-n - {r \choose 2}.$}$$

All in all $\crn^0$ fibers over the irreducible variety $\W_{reg}$, all the fibers are irreducible of the same dimension, and $\crn^0$ is pure-dimensional. 
Hence Lemma \ref{criterion} applies, and the statement follows.
\end{proof}

\vskip.2in

\emph{Proof of Lemma \ref{key lemma}.} 
We are left to prove that $\crn=\overline{\crn^0}$. Take a pair $(A,B)$ with $\rk[JA,JB]=r$ where $JB$ is not regular. 
It is possible to find a regular skew-Hamiltonian $JB'$ such that $[JA,JB']=0$. Then $JB+ t\:JB'$ will be regular for all but finitely many $t$ and 
$[JA,JB+ t\:JB']=[JA,JB]$. This concludes the proof. \qed

\section{Dominant maps and low-dimensional cases}


The study of the varieties $\crn$ has led us to prove a somewhat curious result, that we have not been able to find 
anywhere in literature, and that we therefore report here. Similar results were recently obtained by \cite{note_vanni} using completely 
different techniques. 

\begin{prop}\label{phi dominante}
Let $V$ be a complex vector space of even dimension $n$. Define $\phi$ as:
$$\xymatrix@R-4ex{\PP(\sk \times \sk) \ar@{-->}[r]^{\phi}&\:\:\:\:\:\:\:\:\PP(\sy)\:\:\:\:\:\:\:\:\\
\:\:\:\:([A],[B]) \:\:\:\:\ar@{|->}[r] & [AJB-BJA].}$$
    \begin{itemize} 
        \item[a)] For $n=2$, $\phi$ is not defined. 
        \item[b)] For $n=4$, $\im \phi$ is the Grassmannian $\G(2,5)$ in $\PP^9=\PP(S^2 \C^4)$. 
        \item[c)] For $n=6$, $\im \phi$ is a hypersurface of degree 4 in $\PP^{20}=\PP(S^2 \C^6)$.
        \item[d)] For $n \ge 8$, $\phi$ is locally of maximal rank. In particular, $\phi$ is dominant.
    \end{itemize}
\end{prop}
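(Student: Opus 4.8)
The plan is to handle the four ranges by increasing difficulty, using throughout the $\Sp(n)$-equivariance of the bilinear lift $\Phi(A,B):=AJB-BJA$ of $\phi$ (Proposition \ref{AJB-BJA diamond}) and the explicit description of the spaces $\cnb=\im\phib$. Case (a) is immediate: for $n=2$ the space $\sk$ is one-dimensional, so any $A,B$ are proportional and $\Phi(A,B)=0$, whence $\phi$ is nowhere defined. For (b) I would argue representation-theoretically through the accidental isomorphism $\Sp(4)\cong\Spin(5)$. As an $\Sp(4)$-module $\sk=\Lambda^2\C^4$ splits as $V_5\oplus\C$, the primitive part $V_5$ being the standard $5$-dimensional $\SO(5)$-representation and $\C$ the line spanned by the symplectic form; under the same isomorphism $\sy\cong\mathfrak{sp}_4\cong\mathfrak{so}_5=\Lambda^2V_5$. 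Up to the identification $S\leftrightarrow JS$, the map $\Phi$ is the Lie bracket $\Lambda^2\mathfrak m\to\mathfrak{sp}_4$ of the symmetric pair $(\mathfrak{gl}_4,\mathfrak{sp}_4)$, with $\mathfrak m=\W\cong\sk$. Decomposing $\Lambda^2\mathfrak m=\Lambda^2V_5\oplus V_5$ and applying Schur to the equivariant bracket (note $\Hom_{\Sp(4)}(V_5,\Lambda^2V_5)=0$ while $\Hom_{\Sp(4)}(\Lambda^2V_5,\Lambda^2V_5)=\C$), the bracket is a nonzero scalar on the $\Lambda^2V_5$ summand and zero on $V_5$. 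Writing $\pi:\sk\to V_5$ for the projection, this gives $\Phi(A,B)=c\,\pi(A)\wedge\pi(B)$ with $c\neq0$, so the image is exactly the decomposable $2$-vectors in $\Lambda^2V_5$, i.e. $\G(2,5)$; surjectivity follows since $\pi$ is surjective.

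For (c) and (d) I would pass to a differential computation. The differential of $\Phi$ is $d\Phi_{(A,B)}(\dot A,\dot B)=\phib(\dot A)-\varphi^A(\dot B)$, so its image is the span $\mathcal S^A_n+\cnb$. Pairing with the nondegenerate trace form on $\sy$ and using that $JB$ is skew-Hamiltonian, a short computation ($\operatorname{tr}(S\,\phib(\dot A))=\operatorname{tr}(\dot A(JBS-SBJ))$ with $JBS-SBJ$ skew-symmetric) gives $\cnb{}^{\perp}=\{S\in\sy : (JB)S=S(JB)^t\}$, and the substitution $S\mapsto SJ$ identifies this with the centralizer $\mathfrak z_{\mathfrak{sp}}(JB)=\{T\in\HHH : [JB,T]=0\}$ inside $\mathfrak{sp}(n)$. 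Consequently
\[
\operatorname{corank} d\Phi_{(A,B)}=\dim\big(\mathfrak z_{\mathfrak{sp}}(JA)\cap\mathfrak z_{\mathfrak{sp}}(JB)\big),
\]
so $\phi$ is locally of maximal rank, hence dominant, exactly when one can find regular skew-Hamiltonians $JA,JB$ with no common nonzero centralizing Hamiltonian.

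For (d) I would exhibit such a pair. Taking $JA$ diagonal with $\tfrac n2$ distinct double eigenvalues, Lemma \ref{up to sp} gives $\mathfrak z_{\mathfrak{sp}}(JA)=\bigoplus_{i=1}^{n/2}\mathfrak{sp}(E_i)$, a sum of copies of $\mathfrak{sl}_2$ acting on the $2$-dimensional $J$-orthogonal eigenspaces $E_i$. Writing a candidate $T=\bigoplus_i T_i$ in this centralizer and a generic second skew-Hamiltonian $JB$ in block form, the condition $[JB,T]=0$ becomes the system of intertwining equations $(JB)^{ij}T_j=T_i(JB)^{ij}$; for generic $JB$ the off-diagonal blocks are invertible, so all $T_i$ are determined by $T_1$ and the remaining relations force $T=0$ once $\tfrac n2$ is large enough. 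Carrying out this count is what produces the threshold $n\ge 8$, after which the maximal-rank locus is nonempty and $\phi$ is dominant.

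Case (c), $n=6$, is the main obstacle. The same count shows the two centralizers are never transverse: $\mathfrak z_{\mathfrak{sp}}(JA)\cap\mathfrak z_{\mathfrak{sp}}(JB)$ is forced to be at least one-dimensional for every pair, so $\phi$ cannot be dominant, while for generic $(A,B)$ the intersection is exactly one-dimensional, giving $\operatorname{corank} d\Phi=1$ and an image of codimension $1$. To identify that image as a quartic I would use that it is an irreducible $\Sp(6)$-invariant hypersurface in $\sy=\mathfrak{sp}_6$, hence the zero locus of an invariant polynomial; since the invariants of $C_3$ are generated in degrees $2,4,6$, I would exhibit a degree-$4$ combination of $I_2^2$ and $I_4$ vanishing on the image by evaluating the invariants on a diamond normal form of $\Phi(A,B)$ (Proposition \ref{AJB-BJA diamond}), check that this quartic is irreducible, and match dimensions to conclude equality. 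The delicate point—genuinely special to $n=6$—is proving that this single quartic cuts out the image, equivalently pinning down the universal one-dimensional common centralizer responsible for the drop in rank; this is where I expect the real work to lie.
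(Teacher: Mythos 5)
You take a genuinely different route from the paper, and for parts (c)--(d) a more conceptual one. Part (b) is essentially the same Schur-lemma argument in both texts (the paper factors the map through $\PP(\Lambda^2(\Lambda^2\C^4))$ and projects $\G(2,6)$ from the $\PP^4$ spanned by the trivial summand; you phrase it via $B_2=C_2$), but the paper proves (d) by induction on $n/2$, cropping $(A,B)$ to $J$-compatible sub-pairs of sizes $n-1$ and $n-2$, adding images of Jacobians by inclusion--exclusion, with base cases $n=8,10$ checked in Macaulay2; and it proves (c) by quoting Noferini/Macaulay2 for the equation $\gamma_4^2+4\gamma_2=0$, where $\det(S-tJ)=t^6+\gamma_4(S)t^4+\gamma_2(S)t^2+\det(S)$. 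Your key formula $\operatorname{corank} d\Phi_{(A,B)}=\dim\bigl(\mathfrak{z}(JA)\cap\mathfrak{z}(JB)\bigr)$, with centralizers taken inside $\HHH=\mathfrak{sp}(n)$, is correct --- I checked the trace-pairing computation and the substitution $S\mapsto SJ$ --- and it is the real gain: it explains uniformly why the generic common centralizer has dimension $3$, $1$, $0$ for $n=4,6,\ge 8$, matching the codimensions of the images, and it removes the need for computer-algebra base cases in (d).

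Two things are missing, one routine and one substantive. In (d) you assert the decisive count rather than carry it out; it does work, and the missing content is this. With $JA=\mathrm{diag}(P,\tr{P})$, $P$ diagonal with distinct entries, and blocks taken with respect to the symplectic planes $E_i$, the skew-Hamiltonian condition forces the diagonal blocks of $JB$ to be scalar and $(JB)^{ji}$ to be the adjugate of $(JB)^{ij}$, so the whole system on $T=\oplus_i T_i$, $T_i\in\mathfrak{sl}(E_i)$, is $(JB)^{ij}T_j=T_i(JB)^{ij}$ for $i<j$. When the blocks are invertible the consistency relations among paths say exactly that $T_1$ commutes with every $C_{jk}:=(JB)^{1j}(JB)^{jk}\bigl((JB)^{1k}\bigr)^{-1}$, $2\le j<k\le n/2$. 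For $n/2=3$ there is a single such matrix, whose traceless commutant is always at least (and generically exactly) one-dimensional; for $n/2\ge 4$ there are at least three, and for generic $B$ (e.g.\ normalizing $(JB)^{1j}=I$) their common commutant reduces to scalars, so $T_1$ is scalar and traceless, hence $0$. Semicontinuity of rank then gives generic corank $1$ for $n=6$ and $0$ for $n\ge 8$, which is precisely the threshold.

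The substantive gap is in (c). Your reduction --- irreducible $\Sp(6)$-invariant hypersurface, hence cut out by a polynomial in the invariants of degrees $2,4,6$ --- is sound, but you never exhibit the quartic invariant vanishing on the image, nor prove it irreducible, and you acknowledge that this is where the work lies. That identification is exactly the computation the paper outsources to Noferini/Macaulay2 (with the suggestive block-form calculation yielding $\gamma_4^2+4\gamma_2=0$). So as written, your argument establishes that the image is a hypersurface for $n=6$, but not that it has degree $4$; the rest of the proposal would stand once that invariant is produced and shown irreducible.
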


\begin{proof}
\begin{itemize} 
\item[a)] By direct computation, $AJB-BJA=0$ for all $A,B \in \sk$.

\item[b)] We look at the bilinear map $\phi$ as a tensor map: 
$$\PP(\Lambda^2 \C^4) \times \PP(\Lambda^2 \C^4) \hookrightarrow \PP(\Lambda^2 \C^4 \otimes \Lambda^2 \C^4) \dashrightarrow \PP (S^2\C^4).$$ 

Being skew-symmetric, this map factorizes through:
$$\PP(\Lambda^2 (\Lambda^2 \C^4)) \dashrightarrow^{\Phi} \PP (S^2 \C^4).$$

The image of $\phi$ is now the image through $\Phi$ of the Grassmannian of $2$-planes in $\C^6 \simeq \Lambda^2 \C^4$: 
$\G(2,6) \hookrightarrow \PP(\Lambda^2 (\Lambda^2 \C^4))=\PP(\Lambda^2\C^6)$. 

The same argument that we used in the proof of Proposition \ref{AJB-BJA diamond} shows that map is $\Sp(4)$-equivariant. 
We can thus look at the decomposition of $\Lambda^2 (\Lambda^2 \C^4)$ and $S^2 \C^4$ 
as $\Sp(4)$-modules. We see that $S^2 \C^4$ is irreducible, while $\Lambda^2 \C^4$ decomposes as $\Lambda_0^2 \C^4 \oplus \C \simeq \C^5 \oplus \C$, so that:

\begin{equation}\label{two summands}
    \Lambda^2(\Lambda^2 \C^4)= \Lambda^2(\Lambda_0^2 \C^4 \oplus \C)=\Lambda^2(\Lambda_0^2 \C^4) \oplus \Lambda_0^2 \C^4 \simeq \C^{10} \oplus \C^5.
\end{equation}

Schur's lemma now tells us that $\Phi$ maps $\PP(\Lambda^2(\Lambda_0^2 \C^4))$ isomorphically on $\PP S^2 \C^4$. 
Moreover notice that the two summands in (\ref{two summands}) above correspond to a $\PP^9$ and a $\PP^4$ disjoint 
in $\PP^{14}=\PP(\Lambda^2(\Lambda^2 \C^4))$. 

We have a Grassmannian $\G(2,5) \hookrightarrow \PP(\Lambda^2 (\Lambda_0^2 \C^4))=\PP(\Lambda^2\C^5)$. Moreover remark that 
the $\PP^4=\PP(\Lambda_0^2 \C^4)$ is entirely contained in $\G(2,6)$, simply because all elements of $\Lambda_0^2 \C^4$ will be rank-2 
tensors in $\PP(\Lambda^2 (\Lambda^2 \C^4))$.

This means that we can identify $\phi$ with the projection $\G(2,6) \dashrightarrow \PP^9=\PP( \Lambda_0^2 \C^4)$ from $\PP^4=\PP(\Lambda_0^2 \C^4)$. 
But since $\PP^4 \subset \G(2,6)$, the projection is induced by the projection $\PP^5 \dashrightarrow \PP^4$ and necessarily maps $\G(2,6)$ 
in the Grassmannian $\G(2,5)$, concluding our proof.

In \cite[Proposition 4.1]{note_vanni} it is shown that for $n=4$ the $\im \phi$ coincides with the set of Hamiltonian square roots of the identity times a scalar. 

As an addendum, we point out that the image of $\phi$ the union of the orbits $C$, $G$ and $H$ listed in Table I of \cite{ott_szu}.

\item[c)] Let $S \in \PP(S^2 \C^6)$ be a symmetric matrix. The determinant of $S-tJ$ has the form:
$$\det(S-tJ)=t^6+\gamma_4(S)t^4+\gamma_2(S)t^2+\det(S),$$

with only even terms. The image of $\phi$ is a hypersurface of degree 4 with equation $\gamma_4^2+4\gamma_2=0$. 

This can be proved either by direct computation, computing all possible normal form of the matrices in $\im \phi$, as is done in \cite[Proposition 4.2]{note_vanni}, or else checked with the computer algebra system Macaulay2 \cite{M2}. To understand where does the equation come from, notice that Lemma \ref{up to sp} together with the equivariancy of the map $\phi$ guarantees that, 
up to the symplectic action, we can take the matrix $B$ to be of the form $B=\left[ \begin{array}{cc}
0&\beta\\
-\tr{\beta}&0
\end{array}
\right]$. Let us suppose now that $A$ is also in the block form $A=\left[ \begin{array}{cc}
0&\alpha\\
-\alpha^T&0
\end{array}
\right]$. 

Then $AJB-BJA=\left[ \begin{array}{cc}
0&{[\beta,\alpha]}\\
{[\beta,\alpha]^T}&0
\end{array}
\right]$ 
and we can compute:
\begin{align}
\nonumber \det((AJB-BJA)-tJ)&=\det([\beta,\alpha]-tI)\det([\beta,\alpha]^T+tI)\\
\nonumber &=(-t^3+at+b)(t^3-at+b)\\
\nonumber &=-t^6+2at^4-a^2x^2+b^2,
\end{align}

for some coefficients $a$ and $b$. 

\item[d)] We perform a local computation and then induction on $p=n/2$. A different proof of the fact that $\phi$ is dominant can be 
found in \cite[Proposition 4.4]{note_vanni}, where the author proves that for $n \ge 8$ the set of symmetric matrices with all distinct eigenvalues is inside 
$\im \phi$. 

So let us start our induction. For the low values of $p$ that are the starting point of our induction, namely $p=4$ and $5$, the statement 
can be checked directly with the aid of the computer algebra system Macaulay2 \cite{M2}. 

Assume now $p \ge 6$. Let $M \in \sk$ and consider the submatrices $M^1$ and $M_2$ obtained by removing respectively the $p$-th and the $2p$-th rows and columns and 
the first and $(p+1)$-th rows and columns, as illustrated in the following Figure 4:

\begin{center}
\scalebox{0.65} 
{
\begin{pspicture}(0,-3.546)(16.046,3.546)
\psframe[linewidth=0.05,dimen=outer](16.04,3.54)(8.96,-3.54)
\psline[linewidth=0.051999997cm](12.5,3.52)(12.5,-3.52)
\psline[linewidth=0.051999997cm](8.98,0.0)(16.02,0.0)
\psline[linewidth=0.0139999995cm](8.98,3.2)(16.02,3.2)
\psline[linewidth=0.0139999995cm](8.98,-0.32)(16.02,-0.32)
\psline[linewidth=0.0139999995cm](9.3,3.52)(9.3,-3.52)
\psline[linewidth=0.0139999995cm](12.82,3.52)(12.82,-3.52)
\psdots[dotsize=0.18,dotstyle=x](2.42,0.16)
\psdots[dotsize=0.18,dotstyle=x](2.42,-3.36)
\psdots[dotsize=0.18,dotstyle=x](2.74,-3.36)
\psdots[dotsize=0.18,dotstyle=x](3.06,-3.36)
\psdots[dotsize=0.18,dotstyle=x](3.38,-3.36)
\psdots[dotsize=0.18,dotstyle=x](3.7,-3.36)
\psdots[dotsize=0.18,dotstyle=x](4.02,-3.36)
\psdots[dotsize=0.18,dotstyle=x](4.34,-3.36)
\psdots[dotsize=0.18,dotstyle=x](4.66,-3.36)
\psdots[dotsize=0.18,dotstyle=x](4.98,-3.36)
\psdots[dotsize=0.18,dotstyle=x](5.3,-3.36)
\psdots[dotsize=0.18,dotstyle=x](5.62,-3.36)
\psdots[dotsize=0.18,dotstyle=x](5.94,-3.36)
\psdots[dotsize=0.18,dotstyle=x](5.94,0.16)
\psdots[dotsize=0.18,dotstyle=x](2.74,0.16)
\psdots[dotsize=0.18,dotstyle=x](3.06,0.16)
\psdots[dotsize=0.18,dotstyle=x](3.38,0.16)
\psdots[dotsize=0.18,dotstyle=x](3.7,0.16)
\psdots[dotsize=0.18,dotstyle=x](4.02,0.16)
\psdots[dotsize=0.18,dotstyle=x](4.34,0.16)
\psdots[dotsize=0.18,dotstyle=x](4.66,0.16)
\psdots[dotsize=0.18,dotstyle=x](4.98,0.16)
\psdots[dotsize=0.18,dotstyle=x](5.3,0.16)
\psdots[dotsize=0.18,dotstyle=x](5.62,0.16)
\psdots[dotsize=0.18,dotstyle=x](0.18,0.16)
\psdots[dotsize=0.18,dotstyle=x](0.5,0.16)
\psdots[dotsize=0.18,dotstyle=x](0.82,0.16)
\psdots[dotsize=0.18,dotstyle=x](1.14,0.16)
\psdots[dotsize=0.18,dotstyle=x](1.46,0.16)
\psdots[dotsize=0.18,dotstyle=x](1.78,0.16)
\psdots[dotsize=0.18,dotstyle=x](2.1,0.16)
\psdots[dotsize=0.18,dotstyle=x](0.18,-3.36)
\psdots[dotsize=0.18,dotstyle=x](0.5,-3.36)
\psdots[dotsize=0.18,dotstyle=x](0.82,-3.36)
\psdots[dotsize=0.18,dotstyle=x](1.14,-3.36)
\psdots[dotsize=0.18,dotstyle=x](1.46,-3.36)
\psdots[dotsize=0.18,dotstyle=x](1.78,-3.36)
\psdots[dotsize=0.18,dotstyle=x](2.1,-3.36)
\psdots[dotsize=0.18,dotstyle=x](9.14,3.36)
\psdots[dotsize=0.18,dotstyle=x](9.46,3.36)
\psdots[dotsize=0.18,dotstyle=x](9.78,3.36)
\psdots[dotsize=0.18,dotstyle=x](10.1,3.36)
\psdots[dotsize=0.18,dotstyle=x](10.42,3.36)
\psdots[dotsize=0.18,dotstyle=x](10.74,3.36)
\psdots[dotsize=0.18,dotstyle=x](11.06,3.36)
\psdots[dotsize=0.18,dotstyle=x](11.38,3.36)
\psdots[dotsize=0.18,dotstyle=x](11.7,3.36)
\psdots[dotsize=0.18,dotstyle=x](12.02,3.36)
\psdots[dotsize=0.18,dotstyle=x](12.34,3.36)
\psdots[dotsize=0.18,dotstyle=x](12.66,3.36)
\psdots[dotsize=0.18,dotstyle=x](12.98,3.36)
\psdots[dotsize=0.18,dotstyle=x](13.3,3.36)
\psdots[dotsize=0.18,dotstyle=x](13.62,3.36)
\psdots[dotsize=0.18,dotstyle=x](13.94,3.36)
\psdots[dotsize=0.18,dotstyle=x](14.26,3.36)
\psdots[dotsize=0.18,dotstyle=x](14.58,3.36)
\psdots[dotsize=0.18,dotstyle=x](14.9,3.36)
\psdots[dotsize=0.18,dotstyle=x](15.22,3.36)
\psdots[dotsize=0.18,dotstyle=x](15.54,3.36)
\psdots[dotsize=0.18,dotstyle=x](15.86,3.36)
\psdots[dotsize=0.18,dotstyle=x](9.14,3.04)
\psdots[dotsize=0.18,dotstyle=x](9.14,2.72)
\psdots[dotsize=0.18,dotstyle=x](9.14,2.4)
\psdots[dotsize=0.18,dotstyle=x](9.14,2.08)
\psdots[dotsize=0.18,dotstyle=x](9.14,1.76)
\psdots[dotsize=0.18,dotstyle=x](9.14,1.44)
\psdots[dotsize=0.18,dotstyle=x](9.14,1.12)
\psdots[dotsize=0.18,dotstyle=x](9.14,0.8)
\psdots[dotsize=0.18,dotstyle=x](9.14,0.48)
\psdots[dotsize=0.18,dotstyle=x](9.14,0.16)
\psdots[dotsize=0.18,dotstyle=x](12.66,3.04)
\psdots[dotsize=0.18,dotstyle=x](12.66,2.72)
\psdots[dotsize=0.18,dotstyle=x](12.66,2.4)
\psdots[dotsize=0.18,dotstyle=x](12.66,2.08)
\psdots[dotsize=0.18,dotstyle=x](12.66,1.76)
\psdots[dotsize=0.18,dotstyle=x](12.66,1.44)
\psdots[dotsize=0.18,dotstyle=x](12.66,1.12)
\psdots[dotsize=0.18,dotstyle=x](12.66,0.8)
\psdots[dotsize=0.18,dotstyle=x](12.66,0.48)
\psdots[dotsize=0.18,dotstyle=x](12.66,0.16)
\psdots[dotsize=0.18,dotstyle=x](12.66,-0.16)
\psdots[dotsize=0.18,dotstyle=x](12.98,-0.16)
\psdots[dotsize=0.18,dotstyle=x](13.3,-0.16)
\psdots[dotsize=0.18,dotstyle=x](13.62,-0.16)
\psdots[dotsize=0.18,dotstyle=x](9.14,-0.16)
\psdots[dotsize=0.18,dotstyle=x](9.46,-0.16)
\psdots[dotsize=0.18,dotstyle=x](9.78,-0.16)
\psdots[dotsize=0.18,dotstyle=x](10.1,-0.16)
\psdots[dotsize=0.18,dotstyle=x](10.42,-0.16)
\psdots[dotsize=0.18,dotstyle=x](9.14,-0.48)
\psdots[dotsize=0.18,dotstyle=x](12.66,-3.36)
\psdots[dotsize=0.18,dotstyle=x](12.66,-3.04)
\psdots[dotsize=0.18,dotstyle=x](12.66,-2.72)
\psdots[dotsize=0.18,dotstyle=x](12.66,-2.4)
\psdots[dotsize=0.18,dotstyle=x](12.66,-2.08)
\psdots[dotsize=0.18,dotstyle=x](12.66,-1.76)
\psdots[dotsize=0.18,dotstyle=x](12.66,-1.44)
\psdots[dotsize=0.18,dotstyle=x](12.66,-1.12)
\psdots[dotsize=0.18,dotstyle=x](12.66,-0.8)
\psdots[dotsize=0.18,dotstyle=x](12.66,-0.48)
\psdots[dotsize=0.18,dotstyle=x](13.94,-0.16)
\psdots[dotsize=0.18,dotstyle=x](14.26,-0.16)
\psdots[dotsize=0.18,dotstyle=x](14.58,-0.16)
\psdots[dotsize=0.18,dotstyle=x](14.9,-0.16)
\psdots[dotsize=0.18,dotstyle=x](15.22,-0.16)
\psdots[dotsize=0.18,dotstyle=x](15.54,-0.16)
\psdots[dotsize=0.18,dotstyle=x](15.86,-0.16)
\psdots[dotsize=0.18,dotstyle=x](12.34,-0.16)
\psdots[dotsize=0.18,dotstyle=x](10.74,-0.16)
\psdots[dotsize=0.18,dotstyle=x](11.06,-0.16)
\psdots[dotsize=0.18,dotstyle=x](11.38,-0.16)
\psdots[dotsize=0.18,dotstyle=x](11.7,-0.16)
\psdots[dotsize=0.18,dotstyle=x](12.02,-0.16)
\psdots[dotsize=0.18,dotstyle=x](9.14,-0.8)
\psdots[dotsize=0.18,dotstyle=x](9.14,-1.12)
\psdots[dotsize=0.18,dotstyle=x](9.14,-1.44)
\psdots[dotsize=0.18,dotstyle=x](9.14,-1.76)
\psdots[dotsize=0.18,dotstyle=x](9.14,-2.08)
\psdots[dotsize=0.18,dotstyle=x](9.14,-2.4)
\psdots[dotsize=0.18,dotstyle=x](9.14,-2.72)
\psdots[dotsize=0.18,dotstyle=x](9.14,-3.04)
\psdots[dotsize=0.18,dotstyle=x](9.14,-3.36)
\psframe[linewidth=0.05,dimen=outer](7.08,3.54)(0.0,-3.54)
\psline[linewidth=0.05cm](3.54,3.52)(3.54,-3.52)
\psline[linewidth=0.05cm](0.02,0.0)(7.06,0.0)
\psline[linewidth=0.0139999995cm](0.02,0.32)(7.06,0.32)
\psline[linewidth=0.0139999995cm](0.02,-3.2)(7.06,-3.2)
\psline[linewidth=0.0139999995cm](3.22,3.52)(3.22,-3.52)
\psline[linewidth=0.0139999995cm](6.74,3.52)(6.74,-3.52)
\psdots[dotsize=0.18,dotstyle=x](3.38,2.4)
\psdots[dotsize=0.18,dotstyle=x](3.38,2.72)
\psdots[dotsize=0.18,dotstyle=x](3.38,3.04)
\psdots[dotsize=0.18,dotstyle=x](3.38,3.36)
\psdots[dotsize=0.18,dotstyle=x](3.38,2.08)
\psdots[dotsize=0.18,dotstyle=x](3.38,1.76)
\psdots[dotsize=0.18,dotstyle=x](3.38,1.44)
\psdots[dotsize=0.18,dotstyle=x](3.38,1.12)
\psdots[dotsize=0.18,dotstyle=x](3.38,0.8)
\psdots[dotsize=0.18,dotstyle=x](3.38,0.48)
\psdots[dotsize=0.18,dotstyle=x](3.38,0.16)
\psdots[dotsize=0.18,dotstyle=x](3.38,-0.16)
\psdots[dotsize=0.18,dotstyle=x](3.38,-0.48)
\psdots[dotsize=0.18,dotstyle=x](3.38,-0.8)
\psdots[dotsize=0.18,dotstyle=x](3.38,-1.12)
\psdots[dotsize=0.18,dotstyle=x](3.38,-1.44)
\psdots[dotsize=0.18,dotstyle=x](3.38,-1.76)
\psdots[dotsize=0.18,dotstyle=x](3.38,-2.08)
\psdots[dotsize=0.18,dotstyle=x](3.38,-2.4)
\psdots[dotsize=0.18,dotstyle=x](3.38,-2.72)
\psdots[dotsize=0.18,dotstyle=x](3.38,-3.04)
\psdots[dotsize=0.18,dotstyle=x](3.38,-3.36)
\psdots[dotsize=0.18,dotstyle=x](3.7,-3.36)
\psdots[dotsize=0.18,dotstyle=x](4.02,-3.36)
\psdots[dotsize=0.18,dotstyle=x](4.34,-3.36)
\psdots[dotsize=0.18,dotstyle=x](4.66,-3.36)
\psdots[dotsize=0.18,dotstyle=x](4.98,-3.36)
\psdots[dotsize=0.18,dotstyle=x](5.3,-3.36)
\psdots[dotsize=0.18,dotstyle=x](5.62,-3.36)
\psdots[dotsize=0.18,dotstyle=x](5.94,-3.36)
\psdots[dotsize=0.18,dotstyle=x](6.26,-3.36)
\psdots[dotsize=0.18,dotstyle=x](6.58,-3.36)
\psdots[dotsize=0.18,dotstyle=x](6.9,-3.36)
\psdots[dotsize=0.18,dotstyle=x](6.9,-3.04)
\psdots[dotsize=0.18,dotstyle=x](6.9,-2.72)
\psdots[dotsize=0.18,dotstyle=x](6.9,-2.4)
\psdots[dotsize=0.18,dotstyle=x](6.9,-2.08)
\psdots[dotsize=0.18,dotstyle=x](6.9,-1.76)
\psdots[dotsize=0.18,dotstyle=x](6.9,-1.44)
\psdots[dotsize=0.18,dotstyle=x](6.9,-1.12)
\psdots[dotsize=0.18,dotstyle=x](6.9,-0.8)
\psdots[dotsize=0.18,dotstyle=x](6.9,-0.48)
\psdots[dotsize=0.18,dotstyle=x](6.9,-0.16)
\psdots[dotsize=0.18,dotstyle=x](6.9,0.16)
\psdots[dotsize=0.18,dotstyle=x](6.9,0.48)
\psdots[dotsize=0.18,dotstyle=x](6.9,0.8)
\psdots[dotsize=0.18,dotstyle=x](6.9,1.12)
\psdots[dotsize=0.18,dotstyle=x](6.9,1.44)
\psdots[dotsize=0.18,dotstyle=x](6.9,1.76)
\psdots[dotsize=0.18,dotstyle=x](6.9,2.08)
\psdots[dotsize=0.18,dotstyle=x](6.9,2.4)
\psdots[dotsize=0.18,dotstyle=x](6.9,2.72)
\psdots[dotsize=0.18,dotstyle=x](6.9,3.04)
\psdots[dotsize=0.18,dotstyle=x](6.9,3.36)
\psdots[dotsize=0.18,dotstyle=x](3.7,0.16)
\psdots[dotsize=0.18,dotstyle=x](4.02,0.16)
\psdots[dotsize=0.18,dotstyle=x](4.34,0.16)
\psdots[dotsize=0.18,dotstyle=x](4.66,0.16)
\psdots[dotsize=0.18,dotstyle=x](4.98,0.16)
\psdots[dotsize=0.18,dotstyle=x](5.3,0.16)
\psdots[dotsize=0.18,dotstyle=x](5.62,0.16)
\psdots[dotsize=0.18,dotstyle=x](5.94,0.16)
\psdots[dotsize=0.18,dotstyle=x](6.26,0.16)
\psdots[dotsize=0.18,dotstyle=x](6.58,0.16)
\psdots[dotsize=0.18,dotstyle=x](0.18,0.16)
\psdots[dotsize=0.18,dotstyle=x](0.5,0.16)
\psdots[dotsize=0.18,dotstyle=x](0.82,0.16)
\psdots[dotsize=0.18,dotstyle=x](1.14,0.16)
\psdots[dotsize=0.18,dotstyle=x](1.46,0.16)
\psdots[dotsize=0.18,dotstyle=x](1.78,0.16)
\psdots[dotsize=0.18,dotstyle=x](2.1,0.16)
\psdots[dotsize=0.18,dotstyle=x](2.42,0.16)
\psdots[dotsize=0.18,dotstyle=x](2.74,0.16)
\psdots[dotsize=0.18,dotstyle=x](3.06,0.16)
\psdots[dotsize=0.18,dotstyle=x](0.18,-3.36)
\psdots[dotsize=0.18,dotstyle=x](0.5,-3.36)
\psdots[dotsize=0.18,dotstyle=x](0.82,-3.36)
\psdots[dotsize=0.18,dotstyle=x](1.14,-3.36)
\psdots[dotsize=0.18,dotstyle=x](1.46,-3.36)
\psdots[dotsize=0.18,dotstyle=x](1.78,-3.36)
\psdots[dotsize=0.18,dotstyle=x](2.1,-3.36)
\psdots[dotsize=0.18,dotstyle=x](2.42,-3.36)
\psdots[dotsize=0.18,dotstyle=x](2.74,-3.36)
\psdots[dotsize=0.18,dotstyle=x](3.06,-3.36)
\rput(8.5,-4.5){\Large{Figure 4. Shape of the matrices $M^1$ (left) and $M_2$ (right)}}
\end{pspicture} 
}
\end{center}

\vskip.3in
              
Notice that $M^1$ and $M_2$ are $(n-1)-$dimensional skew-symmetric matrices and that the symplectic form $J$ is ``preserved'' under this cropping.
Moreover the submatrix obtained by removing from $M$ all eight rows and columns combined is an $(n-2)-$dimensional skew-symmetric matrix as well and the symplectic 
form $J$ is ``preserved'' in this case, too.
Call this submatrix $M_2^1$. It has the form illustrated in the following Figure 5:

\begin{center}
\scalebox{0.8} 
{
\begin{pspicture}(0,-3.545)(7.085,3.545)
\definecolor{color12}{rgb}{1.0,0.2,0.0}
\psdots[dotsize=0.18,dotstyle=x](2.42,0.16)
\psdots[dotsize=0.18,dotstyle=x](2.42,-3.36)
\psdots[dotsize=0.18,dotstyle=x](2.74,-3.36)
\psdots[dotsize=0.18,dotstyle=x](3.06,-3.36)
\psdots[dotsize=0.18,dotstyle=x](3.38,-3.36)
\psdots[dotsize=0.18,dotstyle=otimes](3.7,-3.36)
\psdots[dotsize=0.18,dotstyle=x](4.02,-3.36)
\psdots[dotsize=0.18,dotstyle=x](4.34,-3.36)
\psdots[dotsize=0.18,dotstyle=x](4.66,-3.36)
\psdots[dotsize=0.18,dotstyle=x](4.98,-3.36)
\psdots[dotsize=0.18,dotstyle=x](5.3,-3.36)
\psdots[dotsize=0.18,dotstyle=x](5.62,-3.36)
\psdots[dotsize=0.18,dotstyle=x](5.94,-3.36)
\psdots[dotsize=0.18,dotstyle=x](5.94,0.16)
\psdots[dotsize=0.18,dotstyle=x](2.74,0.16)
\psdots[dotsize=0.18,dotstyle=x](3.06,0.16)
\psdots[dotsize=0.18,dotstyle=x](3.38,0.16)
\psdots[dotsize=0.18,dotstyle=otimes](3.7,0.16)
\psdots[dotsize=0.18,dotstyle=x](4.02,0.16)
\psdots[dotsize=0.18,dotstyle=x](4.34,0.16)
\psdots[dotsize=0.18,dotstyle=x](4.66,0.16)
\psdots[dotsize=0.18,dotstyle=x](4.98,0.16)
\psdots[dotsize=0.18,dotstyle=x](5.3,0.16)
\psdots[dotsize=0.18,dotstyle=x](5.62,0.16)
\psdots[dotsize=0.18,dotstyle=otimes](0.18,0.16)
\psdots[dotsize=0.18,dotstyle=x](0.5,0.16)
\psdots[dotsize=0.18,dotstyle=x](0.82,0.16)
\psdots[dotsize=0.18,dotstyle=x](1.14,0.16)
\psdots[dotsize=0.18,dotstyle=x](1.46,0.16)
\psdots[dotsize=0.18,dotstyle=x](1.78,0.16)
\psdots[dotsize=0.18,dotstyle=x](2.1,0.16)
\psdots[dotsize=0.18,dotstyle=otimes](0.18,-3.36)
\psdots[dotsize=0.18,dotstyle=x](0.5,-3.36)
\psdots[dotsize=0.18,dotstyle=x](0.82,-3.36)
\psdots[dotsize=0.18,dotstyle=x](1.14,-3.36)
\psdots[dotsize=0.18,dotstyle=x](1.46,-3.36)
\psdots[dotsize=0.18,dotstyle=x](1.78,-3.36)
\psdots[dotsize=0.18,dotstyle=x](2.1,-3.36)
\psframe[linewidth=0.05,dimen=outer](7.08,3.54)(0.0,-3.54)
\psline[linewidth=0.05cm](3.54,3.52)(3.54,-3.52)
\psline[linewidth=0.05cm](0.02,0.0)(7.06,0.0)
\psline[linewidth=0.0139999995cm](0.02,0.32)(7.06,0.32)
\psline[linewidth=0.0139999995cm](0.02,-3.2)(7.06,-3.2)
\psline[linewidth=0.0139999995cm](3.22,3.52)(3.22,-3.52)
\psline[linewidth=0.0139999995cm](6.74,3.52)(6.74,-3.52)
\psdots[dotsize=0.18,dotstyle=x](3.38,2.4)
\psdots[dotsize=0.18,dotstyle=x](3.38,2.72)
\psdots[dotsize=0.18,dotstyle=x](3.38,3.04)
\psdots[dotsize=0.19,linecolor=color12,dotstyle=otimes](3.38,3.36)
\psdots[dotsize=0.18,dotstyle=x](3.38,2.08)
\psdots[dotsize=0.18,dotstyle=x](3.38,1.76)
\psdots[dotsize=0.18,dotstyle=x](3.38,1.44)
\psdots[dotsize=0.18,dotstyle=x](3.38,1.12)
\psdots[dotsize=0.18,dotstyle=x](3.38,0.8)
\psdots[dotsize=0.18,dotstyle=x](3.38,0.48)
\psdots[dotsize=0.18,dotstyle=x](3.38,0.16)
\psdots[dotsize=0.19,linecolor=color12,dotstyle=otimes](3.38,-0.16)
\psdots[dotsize=0.18,dotstyle=x](3.38,-0.48)
\psdots[dotsize=0.18,dotstyle=x](3.38,-0.8)
\psdots[dotsize=0.18,dotstyle=x](3.38,-1.12)
\psdots[dotsize=0.18,dotstyle=x](3.38,-1.44)
\psdots[dotsize=0.18,dotstyle=x](3.38,-1.76)
\psdots[dotsize=0.18,dotstyle=x](3.38,-2.08)
\psdots[dotsize=0.18,dotstyle=x](3.38,-2.4)
\psdots[dotsize=0.18,dotstyle=x](3.38,-2.72)
\psdots[dotsize=0.18,dotstyle=x](3.38,-3.04)
\psdots[dotsize=0.18,dotstyle=x](3.38,-3.36)
\psdots[dotsize=0.19,linecolor=color12,dotstyle=otimes](3.7,-3.36)
\psdots[dotsize=0.18,dotstyle=x](4.02,-3.36)
\psdots[dotsize=0.18,dotstyle=x](4.34,-3.36)
\psdots[dotsize=0.18,dotstyle=x](4.66,-3.36)
\psdots[dotsize=0.18,dotstyle=x](4.98,-3.36)
\psdots[dotsize=0.18,dotstyle=x](5.3,-3.36)
\psdots[dotsize=0.18,dotstyle=x](5.62,-3.36)
\psdots[dotsize=0.18,dotstyle=x](5.94,-3.36)
\psdots[dotsize=0.18,dotstyle=x](6.26,-3.36)
\psdots[dotsize=0.18,dotstyle=x](6.58,-3.36)
\psdots[dotsize=0.18,dotstyle=x](6.9,-3.36)
\psdots[dotsize=0.18,dotstyle=x](6.9,-3.04)
\psdots[dotsize=0.18,dotstyle=x](6.9,-2.72)
\psdots[dotsize=0.18,dotstyle=x](6.9,-2.4)
\psdots[dotsize=0.18,dotstyle=x](6.9,-2.08)
\psdots[dotsize=0.18,dotstyle=x](6.9,-1.76)
\psdots[dotsize=0.18,dotstyle=x](6.9,-1.44)
\psdots[dotsize=0.18,dotstyle=x](6.9,-1.12)
\psdots[dotsize=0.18,dotstyle=x](6.9,-0.8)
\psdots[dotsize=0.18,dotstyle=x](6.9,-0.48)
\psdots[dotsize=0.19,linecolor=color12,dotstyle=otimes](6.9,-0.16)
\psdots[dotsize=0.18,dotstyle=x](6.9,0.16)
\psdots[dotsize=0.18,dotstyle=x](6.9,0.48)
\psdots[dotsize=0.18,dotstyle=x](6.9,0.8)
\psdots[dotsize=0.18,dotstyle=x](6.9,1.12)
\psdots[dotsize=0.18,dotstyle=x](6.9,1.44)
\psdots[dotsize=0.18,dotstyle=x](6.9,1.76)
\psdots[dotsize=0.18,dotstyle=x](6.9,2.08)
\psdots[dotsize=0.18,dotstyle=x](6.9,2.4)
\psdots[dotsize=0.18,dotstyle=x](6.9,2.72)
\psdots[dotsize=0.18,dotstyle=x](6.9,3.04)
\psdots[dotsize=0.19,linecolor=color12,dotstyle=otimes](6.9,3.36)
\psdots[dotsize=0.19,linecolor=color12,dotstyle=otimes](3.7,0.16)
\psdots[dotsize=0.18,dotstyle=x](4.02,0.16)
\psdots[dotsize=0.18,dotstyle=x](4.34,0.16)
\psdots[dotsize=0.18,dotstyle=x](4.66,0.16)
\psdots[dotsize=0.18,dotstyle=x](4.98,0.16)
\psdots[dotsize=0.18,dotstyle=x](5.3,0.16)
\psdots[dotsize=0.18,dotstyle=x](5.62,0.16)
\psdots[dotsize=0.18,dotstyle=x](5.94,0.16)
\psdots[dotsize=0.18,dotstyle=x](6.26,0.16)
\psdots[dotsize=0.18,dotstyle=x](6.58,0.16)
\psdots[dotsize=0.19,linecolor=color12,dotstyle=otimes](0.18,0.16)
\psdots[dotsize=0.18,dotstyle=x](0.5,0.16)
\psdots[dotsize=0.18,dotstyle=x](0.82,0.16)
\psdots[dotsize=0.18,dotstyle=x](1.14,0.16)
\psdots[dotsize=0.18,dotstyle=x](1.46,0.16)
\psdots[dotsize=0.18,dotstyle=x](1.78,0.16)
\psdots[dotsize=0.18,dotstyle=x](2.1,0.16)
\psdots[dotsize=0.18,dotstyle=x](2.42,0.16)
\psdots[dotsize=0.18,dotstyle=x](2.74,0.16)
\psdots[dotsize=0.18,dotstyle=x](3.06,0.16)
\psdots[dotsize=0.19,linecolor=color12,dotstyle=otimes](0.18,-3.36)
\psdots[dotsize=0.18,dotstyle=x](0.5,-3.36)
\psdots[dotsize=0.18,dotstyle=x](0.82,-3.36)
\psdots[dotsize=0.18,dotstyle=x](1.14,-3.36)
\psdots[dotsize=0.18,dotstyle=x](1.46,-3.36)
\psdots[dotsize=0.18,dotstyle=x](1.78,-3.36)
\psdots[dotsize=0.18,dotstyle=x](2.1,-3.36)
\psdots[dotsize=0.18,dotstyle=x](2.42,-3.36)
\psdots[dotsize=0.18,dotstyle=x](2.74,-3.36)
\psdots[dotsize=0.18,dotstyle=x](3.06,-3.36)
\psline[linewidth=0.0139999995cm](0.02,3.2)(7.06,3.2)
\psline[linewidth=0.0139999995cm](0.02,-0.32)(7.06,-0.32)
\psline[linewidth=0.0139999995cm](3.86,3.52)(3.86,-3.52)
\psline[linewidth=0.0139999995cm](0.34,3.52)(0.34,-3.52)
\psdots[dotsize=0.18,dotstyle=x](3.7,1.12)
\psdots[dotsize=0.18,dotstyle=x](3.7,3.36)
\psdots[dotsize=0.18,dotstyle=x](4.02,3.36)
\psdots[dotsize=0.18,dotstyle=x](4.34,3.36)
\psdots[dotsize=0.18,dotstyle=x](4.66,3.36)
\psdots[dotsize=0.18,dotstyle=x](4.98,3.36)
\psdots[dotsize=0.18,dotstyle=x](5.3,3.36)
\psdots[dotsize=0.18,dotstyle=x](5.62,3.36)
\psdots[dotsize=0.18,dotstyle=x](5.94,3.36)
\psdots[dotsize=0.18,dotstyle=x](6.26,3.36)
\psdots[dotsize=0.18,dotstyle=x](6.58,3.36)
\psdots[dotsize=0.18,dotstyle=x](0.18,3.36)
\psdots[dotsize=0.18,dotstyle=x](0.5,3.36)
\psdots[dotsize=0.18,dotstyle=x](0.82,3.36)
\psdots[dotsize=0.18,dotstyle=x](1.14,3.36)
\psdots[dotsize=0.18,dotstyle=x](1.46,3.36)
\psdots[dotsize=0.18,dotstyle=x](1.78,3.36)
\psdots[dotsize=0.18,dotstyle=x](2.1,3.36)
\psdots[dotsize=0.18,dotstyle=x](2.42,3.36)
\psdots[dotsize=0.18,dotstyle=x](2.74,3.36)
\psdots[dotsize=0.18,dotstyle=x](3.06,3.36)
\psdots[dotsize=0.18,dotstyle=x](3.7,3.04)
\psdots[dotsize=0.18,dotstyle=x](3.7,2.72)
\psdots[dotsize=0.18,dotstyle=x](3.7,2.4)
\psdots[dotsize=0.18,dotstyle=x](3.7,2.08)
\psdots[dotsize=0.18,dotstyle=x](3.7,1.76)
\psdots[dotsize=0.18,dotstyle=x](3.7,1.44)
\psdots[dotsize=0.18,dotstyle=x](3.7,0.8)
\psdots[dotsize=0.18,dotstyle=x](3.7,0.48)
\psdots[dotsize=0.18,dotstyle=x](3.7,-0.16)
\psdots[dotsize=0.18,dotstyle=x](3.7,-0.48)
\psdots[dotsize=0.18,dotstyle=x](3.7,-0.8)
\psdots[dotsize=0.18,dotstyle=x](3.7,-1.12)
\psdots[dotsize=0.18,dotstyle=x](4.02,-0.16)
\psdots[dotsize=0.18,dotstyle=x](4.34,-0.16)
\psdots[dotsize=0.18,dotstyle=x](4.66,-0.16)
\psdots[dotsize=0.18,dotstyle=x](4.98,-0.16)
\psdots[dotsize=0.18,dotstyle=x](5.3,-0.16)
\psdots[dotsize=0.18,dotstyle=x](5.62,-0.16)
\psdots[dotsize=0.18,dotstyle=x](5.94,-0.16)
\psdots[dotsize=0.18,dotstyle=x](6.26,-0.16)
\psdots[dotsize=0.18,dotstyle=x](6.58,-0.16)
\psdots[dotsize=0.18,dotstyle=x](3.7,-1.44)
\psdots[dotsize=0.18,dotstyle=x](3.7,-1.76)
\psdots[dotsize=0.18,dotstyle=x](3.7,-2.08)
\psdots[dotsize=0.18,dotstyle=x](3.7,-2.4)
\psdots[dotsize=0.18,dotstyle=x](3.7,-2.72)
\psdots[dotsize=0.18,dotstyle=x](3.7,-3.04)
\psdots[dotsize=0.18,dotstyle=x](0.18,-3.04)
\psdots[dotsize=0.18,dotstyle=x](0.18,-2.72)
\psdots[dotsize=0.18,dotstyle=x](0.18,-2.4)
\psdots[dotsize=0.18,dotstyle=x](0.18,-0.16)
\psdots[dotsize=0.18,dotstyle=x](0.5,-0.16)
\psdots[dotsize=0.18,dotstyle=x](0.82,-0.16)
\psdots[dotsize=0.18,dotstyle=x](1.14,-0.16)
\psdots[dotsize=0.18,dotstyle=x](1.46,-0.16)
\psdots[dotsize=0.18,dotstyle=x](1.78,-0.16)
\psdots[dotsize=0.18,dotstyle=x](2.1,-0.16)
\psdots[dotsize=0.18,dotstyle=x](2.42,-0.16)
\psdots[dotsize=0.18,dotstyle=x](2.74,-0.16)
\psdots[dotsize=0.18,dotstyle=x](3.06,-0.16)
\psdots[dotsize=0.18,dotstyle=x](0.18,3.04)
\psdots[dotsize=0.18,dotstyle=x](0.18,2.72)
\psdots[dotsize=0.18,dotstyle=x](0.18,2.4)
\psdots[dotsize=0.18,dotstyle=x](0.18,2.08)
\psdots[dotsize=0.18,dotstyle=x](0.18,1.76)
\psdots[dotsize=0.18,dotstyle=x](0.18,1.44)
\psdots[dotsize=0.18,dotstyle=x](0.18,1.12)
\psdots[dotsize=0.18,dotstyle=x](0.18,0.8)
\psdots[dotsize=0.18,dotstyle=x](0.18,0.48)
\psdots[dotsize=0.18,dotstyle=x](0.18,-0.48)
\psdots[dotsize=0.18,dotstyle=x](0.18,-0.8)
\psdots[dotsize=0.18,dotstyle=x](0.18,-1.12)
\psdots[dotsize=0.18,dotstyle=x](0.18,-1.44)
\psdots[dotsize=0.18,dotstyle=x](0.18,-1.76)
\psdots[dotsize=0.18,dotstyle=x](0.18,-2.08)
\rput(3.6,-4.1){Figure 5. Shape of the matrices $M^1_2$}
\end{pspicture} 
}
\end{center}

\vskip.3in

If we consider the sum $M^1$+$M_2$, it gives back all the original matrix $M$ with the exception of four elements, namely $m_{1,p}$, $m_{1,n}$, $m_{p,p+1}$ and $m_{p+1,n}$ 
(and their four symmetric). We call the submatrix consisting of these four elements $M'$. 
The four elements of $M'$, together with their four symmetric ones, are identified with a different symbol and color in Figure 5.

Let us now consider the derivative of the map $\phi$, which is a linear map. Take $A$ and $B$ two general elements in $\sk \times \sk$. 
The Jacobian maps from $T_{A,B} \rightarrow \sy$ and is a 
$2\cdot {n \choose 2} \times {n+1 \choose 2}$ matrix. One has that: 

\begin{equation}\label{eq 1}
    \im T_{A,B}= \im T_{A^1,B^1} + \im T_{A_2,B_2}+ \im T_{A',B'}.
\end{equation}

On the other hand:
\begin{align}
\nonumber \dim(\im T_{A^1,B^1} + \im &T_{A_2,B_2})=\dim (\im T_{A^1,B^1}) + \\
\nonumber &+\dim(\im T_{A_2,B_2}) - \dim (\im T_{A^1_2,B^1_2}).
\end{align}

Now the fact that our cropping preserves the form $J$ implies that the map $\phi$ doesn't change the form of the matrices, meaning that for example $A^1JB^1-B^1JA^1$ 
is a symmetric matrix of the same form of $A^1$ and $B^1$ and the same holds for the other two types. Hence applying our inductive hypothesis:
$$\mbox{$\dim(\im T_{A^1,B^1} + \im T_{A_2,B_2})={n-1 \choose 2}+{n-1 \choose 2} - {n-3 \choose 2},$}$$
which once simplified becomes:

\begin{equation}\label{eq 2}
    \mbox{$\dim(\im T_{A^1,B^1} + \im T_{A_2,B_2})={n+1 \choose 2}-4.$}
\end{equation}

Combining (\ref{eq 1}) and (\ref{eq 2}) if we can prove that the four rows of the jacobian matrix of $\phi$ corresponding to the 
four elements of $A'$ and $B'$ are independent from the rest then the result will follow. 
But this is again a consequence of the fact that our cropping preserves the form $J$. 
Once cleverly ordered the rows of the jacobian matrix corresponding to $T_{A^1,B^1}$ and $T_{A_2,B_2}$ will have all zeros in the entries of the columns 
corresponding to $T_{A^1_2,B^1_2}$.
\qedhere
\end{itemize}
\end{proof}


\begin{rem} In light of the results of Proposition \ref{phi dominante}, it is quite natural to ask whether for big values of $n$ the map $\phi$ is in fact 
surjective and not only dominant. Even more interesting would be finding out whether the map $\phi$ composed with the projection $\PP(\sy) \twoheadrightarrow \PP(\syrc)$ 
is surjective for some value of the rank $r$. In other words, is there an $r$ for which all symmetric matrices of rank $r$ are of the form $AJB-BJA$ for a 
pair of skew-symmetric matrices $(A,B)$? Noferini \cite{note_vanni} has shown that the statement is false for $r=2$, any $n$. The question remains open for 
higher values of $r$. 

It is worth remarking that for any even rank $4 \le r \le n$ it is possible to exhibit regular 
skew-Hamiltonian matrices $JA$ and $JB$ whose commutator has rank $r$.
\end{rem}

\section{Concluding remarks}

\subsection{Linear monads}

Any element $E$ of $M(r,n)$ can be expressed as the cohomology bundle of a \emph{linear monad}, of type:
\begin{equation}\label{monade lineare}
V^* \otimes \OP(-1) \xrightarrow{\alpha} K \otimes \OP \xrightarrow{\beta} V \otimes \OP(1),
\end{equation}
where $V=\HH^1(E(-1))$ as usual and $K:=\HH^1(E \otimes \Omega_{\PP^2}^1)$ is a vector space of dimension $2n+r$, as proved in 
\cite[Lemma 1.4.2]{Hul}. 
To see this one only needs to repeat the proof of Lemma \ref{teorema 7.2} applying the ``dual Beilinson'' theorem, that is 
decomposing the bundle in $D^b(\PP^2)$ with respect to the exceptional collection $\langle \OP(-2), \OP(-1), \OP \rangle$ instead 
of the collection $\langle \OP(-1), \Omega^1_{\PP^2}(1), \OP \rangle$ . 

The unstructured case is treated by Hulek using monads of type \eqref{monade lineare}. 

This also means that, once we fix a framing, all elements of $M^0(r,n)$ are generalized instanton bundles in the sense of 
\cite{henni-jardim-martins}. 

Remark that by combining \eqref{monade lineare} and ($n$ copies of) the Euler sequence in the following diagram:

\begin{equation}\label{diagramma2monadi}
\xymatrix{&&0 \ar[d]&0\ar[d]&\\
&V^* \otimes \OP(-1) \ar[d]^{\alpha}\ar[r]&V \otimes \Omega^1_{\PP^2}(1) \ar[d] \ar[r]& I^* \otimes \OP \ar@{=}[d]&\\
0\ar[r]& K \otimes \OP \ar[d]^{\beta}\ar[r]&V \otimes U \otimes \OP \ar[d]\ar[r]& I^* \otimes \OP\ar[d]\ar[r]& 0\\
0\ar[r]&V \otimes \OP(1)\ar@{=}[r]&V \otimes \OP(1)\ar[d] \ar[r]& 0&\\
&&0&&} \end{equation}

we get the monad:
\begin{equation}\label{monade duale}
	V^* \otimes \OP(-1) \rightarrow V \otimes \Omega^1_{\PP^2}(1) \rightarrow I^* \otimes \OP 
\end{equation}
which is nothing but the dual \eqref{monade}. The second row comes from the cohomology of the Euler sequence tensored by $E(-1)$:
$$0 \to E \otimes \Omega^1_{\PP^2} \to U \otimes E(-1) \to E \to 0,$$
that at the $\HH^1$ level reads:
$$0 \to K \to U \otimes V \to I^* \to 0,$$
On the converse, to go from \eqref{monade} to \eqref{monade lineare} 
it is enough for the bundle $E$ to be stable. 

\subsection{Explicit examples}

According to our construction, in order to obtain explicit examples of elements of $M_{ort}^0(r,n)$ we simply need to 
exhibit the morphism $f \in U \otimes \wedge^2 V$, given by its three skew-symmetric $n \times n$ slices. In other words we need two skew-symmetric matrices 
$A$ and $B$ such that $\rk(AJB-BJA)=r$. It seems interesting and useful to give such an explicit construction for the 
linear monad. For other explicit examples, see \cite{jardim-marchesi-wissdorff}.
 
The case when the rank $r$ equals the second Chern class $n$ is the easiest, because the monad \eqref{monade} 
simplifies, and its dual \eqref{monade duale} gives us a resolution of the bundle $E$:
\begin{equation}\label{monade=risoluzione}
0 \rightarrow \OP(-1) \otimes V^* \xrightarrow{\tr{f}} \Omega^1_{\PP^2}(1) \otimes V \rightarrow E \rightarrow 0
\end{equation}

Keeping the same notations as above, let $f \in U \otimes \wedge^2 V$ be given by its three skew-symmetric $n \times n$ slices $A$, $J$ and $B$. 
We want to construct explicitly the linear monad having $E$ as its cohomology. We can again combine everything in a diagram:

\begin{equation}
\xymatrix{
&&0\ar[d]&0\ar[d]&\\
0 \ar[r]& \OP(-1)^n \ar[r]^{\tr{f}}\ar@{=}[d]& \Omega^1_{\PP^2}(1)^n \ar[d]\ar[r]& E \ar[r]\ar[d]& 0\\
0\ar[r]& \OP(-1)^n \ar[r]^\alpha& \OP^{3n} \ar[d]^\beta\ar[r]& \Ker \beta \ar[d]\ar[r]& 0\\
&&\OP(1)^n\ar@{=}[r]\ar[d]&\OP(1)^n\ar[d]&\\
&&0&0&
} \end{equation}

To construct explicitly $\alpha$ and $\beta$ we start from the morphism $f$. Let $\C[x_0,x_1,x_2]$ be the coordinate ring. Denote by $\ul{x_i}:=x_i I_n$ the $n \times n$ 
scalar matrix, and set:
$$X:=\left[
\begin{array}{c}
\ul{x_0}\\
\ul{x_1}\\
\ul{x_2}
\end{array}
\right].
$$
Recall that the matrix
\begin{equation}
    \HH^0(f)=\left[
\begin{array}{ccc}
0&J&A\\
-J&0&B\\
-A&-B&0
\end{array}
\right]
\end{equation}
is symmetric, and in this case it is also of maximal rank $\rk \HH^0(f)=2n+r=3n$. Considering the associated complex quadratic form, it factorizes as 
$\HH^0(f)=\tr{Q}Q$ for some matrix $Q$. Now set $\alpha:= Q X$ and $\beta:=\tr{\alpha}$, so that:
$$\beta\alpha= (\tr{X} \tr{Q})(QX)=\tr{X}(\tr{Q}Q)X=\tr{X} \HH^0(f) X=0.$$

\subsection{Further questions and the case of odd $c_2$}
The work presented in this paper leaves many open questions. We are currently working in generalizing the proof of the main theorem 
to all values of $3 \le r \le n$. 

We believe that the case $c_2$ odd is also very interesting. We know that these bundles cannot have trivial splitting on the general line, and that 
they do not deform to ones that do. How can we study their moduli space? Notice that $(S^2\T\PP^2)(-3)$ is an example of a stable rank $3$ orthogonal 
bundle on $\PP^2$ with Chern classes $(c_1,c_2)=(0,3)$, and that its splitting type on the general line $\ell$ 
is $\OO_\ell(-1) \oplus \OO_\ell \oplus \OO_\ell(1)$.

\thispagestyle{empty}
\bibliographystyle{amsalpha}
\bibliography{biblioada}

\end{document}